\journal{Journal of Functional Analysis}
\newcommand{\Rex}{\overline{\mathbb{R}}}
\newtheorem{theorem}{Theorem}
\newtheorem{lemma}[theorem]{Lemma}
\newtheorem{corollary}[theorem]{Corollary}
\newtheorem{proposition}[theorem]{Proposition}
\newdefinition{definition}{Definition}
\newproof{proof}{Proof}
\newdefinition{remark}{Remark}
\newdefinition{example}{Example}
\newproof{pot}{Proof of Theorem \ref{thm2}}
\let\epsilon\varepsilon
\DeclareMathOperator{\cl}{cl}
\DeclareMathOperator{\co}{co}
\DeclareMathOperator{\lin}{lin}
\DeclareMathOperator{\cco}{\overline{co}}
\DeclareMathOperator{\aff}{aff}
\DeclareMathOperator{\epi}{epi}
\DeclareMathOperator{\grafo}{gph}
\DeclareMathOperator{\dom}{dom}
\DeclareMathOperator{\inte}{int}
\DeclareMathOperator{\spn}{span}
\DeclareMathOperator{\sub}{\partial}
\begin{document}

\begin{frontmatter}

\title{Characterizations of the subdifferential of convex integral
	functions under qualification conditions}
\tnotetext[mytitlenote]{This work is partially supported by CONICYT grants: Fondecyt 1151003, Fondecyt 1190012, Fondecyt 1150909, Basal PFB -03 and Basal FB0003, CONICYT-PCHA/doctorado Nacional / 2014-21140621.}

\author{Rafael Correa\corref{CORREA}}
\address{Universidad de O'Higgins and DIM-CMM Universidad de Chile, Chile}
\ead{rcorrea@dim.uchile.cl}

\author{Abderrahim Hantoute\corref{HANTAOUTE}}
\address{Center for Mathematical Modeling, Universidad de Chile, Chile}
\ead{ahantoute@dim.uchile.cl}
\author{Pedro P\'erez-Aros\corref{PEREZ}}
\address{Instituto de Ciencias de la Ingenier\'ia, Universidad de O'Higgins, Chile}
\ead{pedro.perez@uoh.cl}

\begin{abstract}
This work provides formulae for the $\epsilon$-subdifferential of integral functions in the framework of complete $\sigma$-finite measure spaces and locally convex spaces. In this work we  present here new formulae for this $\epsilon$-subdifferential under the presence of continuity-type qualification conditions relying on the data involved in the integrand. 
\end{abstract}

\begin{keyword}
Normal integrands \sep Convex integral functions \sep 
	epi-pointed functions \sep conjugate functions.
\MSC[2010] 49J52\sep 47N10\sep 28B20
\end{keyword}

\end{frontmatter}

We provide new formulae for the subdifferential and the $\epsilon$-subdifferential of the convex integral function given by the following expression
\begin{equation*}
	I_f (x):=\int\limits_T f(t,x) d\mu(t)
\end{equation*}
where $(T,\Sigma,\mu)$ is a complete $\sigma$-finite measure space, and $f:T\times X\to \Rex$ is a convex normal integrand defined on a locally convex space $X$.

General formulae have been established in \cite{INTECONV} using a finite-dimentional reduction approach, without additional assumptions on the data represented by the integrand $f$. In this paper, we use natural qualifications condition, involving appropriate continuity assumption on the integrand, to give more explicit characterization of the subdifferential and the $\epsilon$-subdifferential of the function $I_f$. This research is in the line of the theory of convex analysis and duality, which provides a rich literature for subdifferential calculus of convex functions (see e.g. \cite{MR1245600,MR0467080,MR3522006,MR3561780,MR1721727,MR2346533,MR2144010,MR2986672,MR1491362,MR1451876,MR0373611}).

The rest of the paper is organized as follows: In \Cref{Notation} we give  standard notations and definitions of convex analysis. In \Cref{Pre} we give main definitions and notations of measure theory, vector-valued integration, integrand functions, integral functionals and integration of set valued-maps. In \Cref{QUA} we characterize the subdifferential of $I_f$ under classical continuity-type qualification conditions that involve the data (see \cref{corollaryAsplund,Exactintegralformula}). In \Cref{CONJ} we give a characterization for the $\epsilon$-normal sets, in terms of the functions $f_t's$ (see Proposition \ref{lemma:normalcone}). Finally, we give in \Cref{BronstedRockafellartheorems} other sequential characterizations for the subdifferential of $I_f$.

\section{Notation}

\label{Notation}

Throughout the paper, the symbols $(X,\tau _{X})$ and $(X^{\ast },\tau _{X^{\ast
}})$ denote two  (Hausdorff) locally convex spaces (lcs, for short). The associated (symmetric) bilinear form is $\langle \cdot ,\cdot \rangle :X^{\ast
}\times X\rightarrow \mathbb{R}$, $\langle x^{\ast },x\rangle =\langle
x,x^{\ast }\rangle =x^{\ast }(x)$. For a point $x\in X$ ($x^{\ast }\in
X^{\ast }$, resp.) $\mathcal{N}_{x}(\tau _{X})$ ($\mathcal{N}_{x^{\ast
}}(\tau _{X^{\ast }})$, resp.) represents the (convex, balanced and symmetric)
neighborhoods system of $x$ ($x^{\ast }$, resp.) with respect to the topology 
$\tau _{X}$ ($\tau _{X^{\ast }}$, resp.); we omit the reference to the
topology when there is no confusion. Examples of $\tau _{X^{\ast }}$ are the
weak$^{\ast }\ $topology $w(X^{\ast },X)$ ($w^{\ast },$ for short), the Mackey topology denoted by 
$\tau (X^{\ast },X)$, and the strong topology denoted by $\beta (X^{\ast
},X).$ The real extended line is denoted by $\overline{\mathbb{R}}:=\mathbb{R}\cup \{-\infty
,\infty \}$ and we adopt the conventions that $0\cdot \infty =0=0\cdot (-\infty
)$ and $\infty +(-\infty )=(-\infty )+\infty =\infty .$ We denote $B_{\rho
}(x,r):=\{y\in X:\rho (x-z)\leq r\}$ if $\rho :X\rightarrow \mathbb{R}$ is a
seminorm, $x\in X,$ and $r>0$.

Let $f:X\rightarrow \overline{\mathbb{R}}$ be a function, the 
\emph{domain} of $f$ is $\dom f:=\{x\in X\mid f(x)<+\infty \}$. The function $f$ is said to be \emph{proper} if $\dom f\neq \emptyset $ and $%
f>-\infty $. The \emph{conjugate} of $f$ is
the function $f^{\ast }:X^{\ast }\rightarrow \overline{\mathbb{R}}$ defined
by 
\begin{equation*}
	f^{\ast }(x^{\ast }):=\sup_{x\in X}\{\langle x^{\ast },x\rangle -f(x)\},
\end{equation*}%
and the \emph{biconjugate} of $f$ is $f^{\ast \ast }:=(f^{\ast })^{\ast
}:X\rightarrow \overline{\mathbb{R}}.$ For $\varepsilon \geq 0$ the $%
\varepsilon $-\emph{subdifferential} of $f$ at a point $x\in X$ where it is
finite is the set 
\begin{equation*}
	\partial _{\varepsilon }f(x):=\{x^{\ast }\in X^{\ast }\mid \langle x^{\ast
	},y-x\rangle \leq f(y)-f(x)+\varepsilon ,\ \forall y\in X\};
\end{equation*}%
if $f(x)$ is not finite, we set $\partial _{\varepsilon }f(x):=\emptyset $.

The \emph{indicator} and the \emph{support} functions of a set $A$ ($%
\subseteq X,X^{\ast }$) are, respectively, 
\begin{equation*}
	\delta _{A}(x):=%
	\begin{cases}
		0\qquad & x\in A \\ 
		+\infty & x\notin A,%
	\end{cases}%
	\qquad \qquad \sigma _{A}:=\delta _{A}^{\ast }.
\end{equation*}%

For a set $A\subseteq X$, we denote by $\inte(A)$, $\overline{A}$ (or $\cl A $), $\co(A)$, 
$\cco(A)$, $\lin(A)$ and $\aff(A)$, the \textit{interior}, the \textit{%
	closure}, the \emph{convex hull}, the \emph{closed convex hull}, the %
\emph{linear subspace} and the \emph{affine subspace} of $A$.  The \emph{polar} of $A$ is
the set 
\begin{equation*}
	A^{o}:=\{x^{\ast }\in X^{\ast }\mid \langle x^{\ast },x\rangle \leq
	1,\forall x\in A\},
\end{equation*}%
and the \emph{recession cone} of $A$ (when $A$ is convex) is the set 
\begin{equation*}
	A_{\infty }:=\{x\in X\mid \lambda x+y\in A\text{ for some }y\text{ in }A%
	\text{ and all }\lambda \geq 0\}.
\end{equation*}%
The $\varepsilon $-normal set of $A$ at $x$ is $\emph{N}_{A}^{\varepsilon
}(x):=\partial _{\varepsilon }\delta _{A}(x)$.

\section{Preliminary results}\label{Pre}

In what follows $(X,\tau _{X})$ and $(X^{\ast },\tau _{X^{\ast }})$ are both
lcs, as in  \Cref{Notation}. 
We give the main definitions and results
which are used in the sequel.

First, we introduce the following class of functions, referred to as the
class of epi-pointed functions, which has been shown to be useful for many
purposes (see, for instance, \cite{PEREZ2018,BronstedRock,PerezAros2018,MR2960092,MR1884906,MR2861329,MR2739586,COHASA,MR3507100,MR3033098}. As far as we know the definition used in this work was first introduced in \cite{COHASA} with the name of \emph{Mackey-Epipointed function}.

\begin{definition} A\ function $f:X\rightarrow 
	\overline{\mathbb{R}}$ is said to be epi-pointed if $f^{\ast }$ is proper
	and $\tau (X^{\ast },X)$-continuous at some point of its domain.
\end{definition}

A Hausdorff topological space $S$ is said to be a Suslin space if there
exist a Polish space $P$ (complete, metrizable and separable) and a
continuous surjection from $P$ to $S$ (see \cite%
{MR0027138,MR0467310,MR0426084}). For example, if $X$ is a separable Banach
space, then $(X,\left\Vert \cdot \right\Vert )$ and $(X^{\ast },w^{\ast })$
are Suslin.

Let $(T,\Sigma ,\mu )$ be a complete $\sigma $-finite measure space. Given a
function $f:T\rightarrow \overline{\mathbb{R}}$, we denote 
$\mathcal{D}_{f}:=\{g\in L^{1}(T,\mathbb{R}):f(t)\leq g(t)\;\mu \text{-almost
	everywhere}\},$ and define the upper integral of $f$ by 
\begin{equation}
	\int\limits_{T}f(t)d\mu (t):=\inf_{g\in \mathcal{D}_{f}}\int\limits_{T}g(t)%
	\mu (t)  \label{defint}
\end{equation}%
whenever $\mathcal{D}_{f}\neq \emptyset $. If $\mathcal{D}_{f}=\emptyset $,
we set $\int\limits_{T}f(t)d\mu (t):=+\infty $.
A function $f:T\rightarrow U,$ with $U$ being a topological space, is called
simple if there are $k\in \mathbb{N}$, a partition $T_{i}\in \Sigma $ and
elements $x_{i}\in U$, $i=0,...,k,$ such that $f=\sum_{i=0}^{k}x_{i}%
\mathds{1}_{T_{i}}$ (here, $\mathds{1}_{T_{i}}$ denotes the characteristic
function of $T_{i}$, equal to $1$ in $T_{i}$ and $0$ outside). Function $%
f\ $is called strongly measurable (measurable, for short) if there exists a
countable family $(f_{n})_n$ of simple functions such that $%
f(t)=\lim\limits_{n \rightarrow \infty }f_{n}(t)$ for almost every (ae, for
short) $t\in T$.

A strongly measurable function $f:T\rightarrow X$ is said to be strongly 
integrable (integrable for short), and we write $f\in \mathcal{L}^{1}(T,X)$, if $\int_{T}\sigma
_{B}(f(t))d\mu (t)<\infty $ for every bounded balanced subset $B\subset
X^{\ast }$. Observe that in the Banach space setting, $\mathcal{L}^{1}(T,X)$
is the set of Bochner integrable functions (see, e.g., \cite[\S II]{MR0453964}%
).

A function $f:T\rightarrow X$ is called (weakly or scalarly integrable)
weakly or scalarly measurable if for every $x^{\ast }\in X^{\ast },$ $%
t\rightarrow \langle x^{\ast },f(t)\rangle $ is (integrable, resp.,)
measurable. We denote $\mathcal{L}_{w}^{1}(T,X)$ the space of all weakly
integrable functions {$f$ such that $$\int_{T}\sigma _{B}(f(t))d\mu
	(t)<\infty $$ for every bounded balanced subset $B\subseteq X^{\ast }$}.
Similarly, for functions taking values in $X^*$, we say that $f:T\rightarrow
X^*$ is (w$^*$-integrable, resp.) w$^*$-measurable if for every $x\in X, $
the mapping $t\rightarrow \langle x,f(t)\rangle $ is (integrable, resp.,)
measurable. Also, we denote $\mathcal{L}_{w^*}^{1}(T,X^*)$ the space of all w%
$^*$-integrable functions $f$ such that $\int_{T}\sigma _{B}(f(t))d\mu
(t)<\infty $ for every bounded balanced subset $B\subseteq X$.

It is clear that every strongly integrable function is weakly integrable.
However, the weak measurability of a function $f$ does not necessarily imply
the measurability of the function $\sigma _{B}(f(\cdot ))$, and so the
corresponding integral of this last function must be understood in the sense
of \cref{defint}. Also, observe that if in addition $X$ is a Suslin, then
every $(\Sigma ,\mathcal{B}(X))$-measurable function $f:T\rightarrow X$
(that is, $f^{-1}(B)\in \Sigma $ for all $B\in \mathcal{B}(X)$) is weakly
measurable, where $\mathcal{B}(X)$ is the Borel $\sigma $-Algebra of the
open (equivalently, weak open) set of $X$ (see, e.g., \cite[Theorem III.36 ]{MR0467310}).

The quotient spaces $L^{1}(T,X)$ and $L_{w}^{1}(T,X)$ of $\mathcal{L}%
^{1}(T,X)$ and $\mathcal{L}_{w}^{1}(T,X)$, respectively, are those given
with respect to the equivalence relations $f=g$ ae, and $\langle f,x^{\ast
}\rangle =\langle g,x^{\ast }\rangle $ ae for all $x^{\ast }\in X^{\ast },$
respectively (see, for example, \cite{MR0276438}).

It is worth observing that when $X$ is a separable Banach space, both
notions of (strong and weak) measurability and integrability coincide;
hence, if, in addition, $(X^{\ast },\Vert \cdot \Vert )$ is separable, then $%
\mathcal{L}^{1}(T,X^{\ast })=\mathcal{L}_{w^{\ast }}^{1}(T,X^{\ast })$ (see 
\cite[\S II, Theorem 2]{MR0453964}). It is worth recalling that when the space 
$X$ is separable, but the dual $X^*$ is not $\| \cdot \|$-separable, $%
\mathcal{L}^{1}(T,X^{\ast })$ and $\mathcal{L}_{w^{\ast }}^{1}(T,X^{\ast })$
may not coincide (see \cite[\S II Example 6]{MR0453964}). For every w$^*$%
-integrable function $f:T\rightarrow X^{\ast }$ and every $E\in \Sigma $,
the function $x_{E}^{\sharp }$ defined on $X$ as $x_{E}^{\sharp
}(x):=\int_{E}\langle f,x\rangle d\mu $ is a linear mapping (not necessary
continuous), which we call the weak integral of $f$ over $E$, and we write $%
\int_{E}fd\mu:= x_E^{\sharp }$. Moreover, if $f$ is strongly integrable,
this element $\int_{E}fd\mu$ also refers to the strong integral of $f$ over $%
E$. Observe that, in general, $\int_{E}fd\mu$ may not be in $X^*$. However,
when the space $X$ is Banach, and function $f:T\rightarrow X^{\ast }$ is w$%
^* $-integrable, $\int_{E}fd\mu \in X^*$ and is called the Gelfand integral
of $f $ over $E$ (see \cite[\S II, Lemma 3.1]{MR0453964} and details therein).

When $X$ is Banach, $L^{\infty }(T,X)$ is the normed space of (equivalence
classes with respect to the relation $f=g$ ae) strongly measurable functions 
$f:T\rightarrow X$, which are essentially bounded; that is, $\Vert x\Vert
_{\infty }:=\text{ess}\sup \{\Vert x(t)\Vert :t\in T\}<\infty .$ A
functional $\lambda ^{\ast }\in L^{\infty }(T,X)^{\ast }$ is called singular
if there exists a sequence of measurable sets $T_{n}$ such that $%
T_{n+1}\subseteq T_{n}$, $\mu (T_{n})\rightarrow 0$ as $n\rightarrow \infty $
and $\lambda ^{\ast }(g\mathds{1}_{T^c_{n}})=0$ for every $g\in L^{\infty
}(T,X)$. We will denote $L^{\text{sing}}(T,X)$ the set of all singular
functionals. It is well-known that each functional $\lambda ^{\ast }\in
L^{\infty }(T,X)^{\ast }$ can be uniquely written as the sum $\lambda ^{\ast
}(\cdot )=\int_{T}\langle \lambda _{1}^{\ast }(t),\cdot \rangle d\mu
(t)+\lambda _{2}^{\ast }(\cdot )$, where $\lambda _{1}^{\ast }\in L_{w^{\ast
}}^{1}(T,X^{\ast })$ and $\lambda _{2}^{\ast }\in L^{\text{sing}}(T,X)$
(see, for example, \cite{MR0467310,0036-0279-30-2-R03}).

A
function $f:T\times X\rightarrow \overline{\mathbb{R}}$ is called a $\tau $%
-normal integrand (or, simply, normal integral when no confusion occurs), if 
$f$ is $\Sigma \otimes \mathcal{B}(X,\tau )$-measurable and the functions $%
f(t,\cdot )$ are lsc for ae $t\in T$. In addition, if  $f(t,\cdot )$ is convex and  proper  for ae $t\in T$, then $f$ is called convex normal integrand.  For
simplicity, we denote $f_{t}:=f(t,\cdot )$. 

Associated with the integrand we consider the integral function ${I}_{f}$ defined
on $X$ as 
\begin{equation*}
	x\in X\rightarrow I_{f}(x):=\int\limits_{T}f(t,x)d\mu (t).
\end{equation*}
A multifunction $G:T\rightrightarrows X$ is called $\Sigma\text{-}\mathcal{B}(X)$-graph measurable (measurable, for simplicity) if its graph, $\grafo G:=\{(t,x)\in
T\times X:x\in G(t)\}$, is an element of $\Sigma \otimes \mathcal{B}(X)$. We
say that $G$ is weakly measurable if for every $x^{\ast }\in X^{\ast }$, $%
t\rightarrow \sigma _{G(t)}(x^{\ast })$ is a measurable function.

The strong and the weak integrals of a (non-necessarily measurable)
multifunction $G:T\rightrightarrows X^{\ast }$ are given respectively by 
\begin{align*}
	\int\limits_{T}G(t)d\mu (t):=& \bigg\{\int\limits_{T}m(t)d\mu (t)\in X^{\ast
	}:m\text{ is integrable}\text{ and }m(t)\in G(t)\;ae\bigg\}, \\
	(w)\text{-}\int\limits_{T}G(t)d\mu (t):=& \bigg\{\int\limits_{T}m(t)d\mu
	(t)\in X^{\ast }:m\text{ is $w^{\ast }$-integrable}\text{ and }m(t)\in
	G(t)\;ae\bigg\}.
\end{align*}
The above definition is called by some authors  the \emph{ Aumman's integral} (see \cite{MR0185073}).

\section{Characterizations under qualification conditions}\label{QUA}

In this section we give the main formulae of the subdifferential of $I_f$.First  let us introduce the following notation. For given $\eta \geq 0$ we denote 
\begin{align*}
\mathcal{I}(\eta ):=\{\ell \in L^{1}(T,\mathbb{R}%
_{+}):\int_T \ell(t)  d\mu(t)\leq \eta \}.
\end{align*}
Furthermore, for $x\in X$ we set $\mathcal{F}(x)$ as the set of all finite-dimensional linear spaces which contains $x$, that is,
\begin{align*}
\mathcal{F}(x):=\{V\subseteq X:V\text{ is a finite-dimensional linear
	space and }x\in V\}.
\end{align*}

\begin{theorem}
	\label{corollaryAsplund}Let\ $X$ be Asplund, and assume that for every
	finite-dimensional subspace $F\subset X,$ the function $f_{|_{F}}:T\times
	F\rightarrow \mathbb{R\cup \{+\infty \}}$ is a convex normal integrand, and
	assume that $I_{f}$ has a continuity point. If $x\in X$ is a point of continuity of  $f_{t}$ for almost every $t$, then for every $\varepsilon \geq 0$ 
	\begin{equation*}
		\partial _{\varepsilon }I_{f}(x)=\bigcup\limits_{\substack{ \epsilon
				_{1}+\epsilon _{2}=\epsilon  \\ \epsilon _{1},\epsilon _{2}\geq 0}}%
		\bigcap_{\gamma >0} \cl \nolimits^{w^{\ast }}\left(
		\bigcup\limits_{\ell \in \mathcal{I}(\epsilon _{1}+\gamma )}\left\{
		(w)\text{-}\int\limits_{T}\partial _{\ell (t)}f_{t}(x)d\mu (t)\right\} \right) +N_{%
			\dom I_{f}}^{\epsilon _{2}}(x).
	\end{equation*}%
	In particular, for $\varepsilon =0$ we have  
	\begin{equation*}
		\partial I_{f}(x)= \cl \nolimits^{w^{\ast }}\left( \left\{
		(w)\text{-}\int\limits_{T}\partial f_{t}(x)d\mu (t)\right\} \right) +N_{{\dom%
			}I_{f}}(x).
	\end{equation*}
\end{theorem}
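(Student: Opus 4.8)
The plan is to combine the general, unqualified finite-dimensional reduction of $\partial_\varepsilon I_f$ established in \cite{INTECONV} with the two continuity hypotheses. The second one (continuity of $f_t$ at $x$) localizes each $f_t$ near $x$, while the first (a continuity point of $I_f$) controls the $w^{\ast}$-closures and is responsible for the normal-cone correction term. I will prove the two inclusions separately; only the case $x\in\dom I_f$ is of interest, since otherwise both sides are empty. The inclusion ``$\supseteq$'' is the direct one. Writing an element of the right-hand side as $x^{\ast}=a+y^{\ast}$ with $y^{\ast}\in N^{\epsilon_2}_{\dom I_f}(x)$ and, for every $\gamma>0$, $a$ a $w^{\ast}$-limit of functionals $\int_T m(t)\,d\mu$ with $m$ a $w^{\ast}$-integrable selection of $t\mapsto\partial_{\ell(t)}f_t(x)$ and $\int_T\ell\,d\mu\le\epsilon_1+\gamma$, I would integrate the pointwise inequalities $\langle m(t),z-x\rangle\le f_t(z)-f_t(x)+\ell(t)$ over $z\in\dom I_f$, add $\langle y^{\ast},z-x\rangle\le\epsilon_2$, pass to the $w^{\ast}$-limit, and let $\gamma\downarrow0$, obtaining $\langle x^{\ast},z-x\rangle\le I_f(z)-I_f(x)+\varepsilon$ for all $z$ (the case $z\notin\dom I_f$ being trivial), i.e.\ $x^{\ast}\in\partial_\varepsilon I_f(x)$.

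For ``$\subseteq$'' I would first reduce to finite dimensions. For every $L\in\mathcal F(x)$ one has $I_f|_{L}=I_{f|_L}$; the restriction hypothesis makes $f|_L$ a convex normal integrand, $x$ remains a continuity point of each $(f_t)|_L$, and---restricting, harmlessly, the intersection below to the cofinal subfamily of those $L$ that also contain a continuity point of $I_f$---$I_{f|_L}$ has a continuity point. Testing the subgradient inequality on $\spn\{x,y\}$ as $y$ ranges over $X$ yields the elementary identity
\[
\partial_\varepsilon I_f(x)=\{x^{\ast}\in X^{\ast}\ :\ x^{\ast}|_{L}\in\partial_\varepsilon I_{f|_L}(x)\ \text{ for every }L\in\mathcal F(x)\},
\]
reducing the statement to a finite-dimensional one, at the cost of a subsequent ``gluing'' along the directed family $\mathcal F(x)$.

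On each $L$ I would derive the finite-dimensional formula from the conjugate of an integral functional together with the $L^{\infty}$-duality recalled in \Cref{Pre}: under the present continuity, $I_{f|_L}^{\ast}$ is given by an integral inf-convolution of the conjugates $f_t^{\ast}$, corrected by the singular part of the $L^{\infty}$-functionals, which materializes as the support function of $\dom I_{f|_L}$. Inserting this into the identity $\partial_\varepsilon g(x)=\{y^{\ast}:g(x)+g^{\ast}(y^{\ast})\le\langle y^{\ast},x\rangle+\varepsilon\}$, splitting $\varepsilon=\epsilon_1+\epsilon_2$ between the (generally unattained) integral infimum and the singular part, and applying a measurable-selection argument to the normal integrand $t\mapsto\partial_{\ell(t)}f_t(x)$ (here I use that, $X$ being Asplund, the separable subspaces that occur have separable duals, so the integration theory of \Cref{Pre} applies cleanly) should give, for each $L$,
\[
\partial_\varepsilon I_{f|_L}(x)=\bigcup_{\substack{\epsilon_1+\epsilon_2=\epsilon\\ \epsilon_1,\epsilon_2\ge0}}\Big[\bigcap_{\gamma>0}\cl\Big(\bigcup_{\ell\in\mathcal I(\epsilon_1+\gamma)}\int_T\partial_{\ell(t)}(f_t|_L)(x)\,d\mu\Big)+N^{\epsilon_2}_{\dom I_{f|_L}}(x)\Big].
\]
It then remains to glue these identities and return to $X$. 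For a fixed $x^{\ast}$, the set of admissible $\epsilon_1\in[0,\varepsilon]$ is closed and nonempty for each $L$ and decreases as $L$ grows, so by compactness of $[0,\varepsilon]$ a single split $(\epsilon_1,\epsilon_2)$ works simultaneously for all $L$; with this split fixed, the $L$-indexed families of projected integral parts and of $\epsilon_2$-normal-cone parts are consistent under the restriction maps $X^{\ast}\to L^{\ast}$, and a $w^{\ast}$-compactness argument---again exploiting Asplundness, through $w^{\ast}$-compactness of the sets $\partial f_t(x)$ (legitimate since $f_t$ is continuous at $x$) and the stability of the weak integral---reconstructs $x^{\ast}=x_1^{\ast}+x_2^{\ast}$ with $x_1^{\ast}\in\bigcap_{\gamma>0}\cl^{w^{\ast}}\big(\bigcup_{\ell\in\mathcal I(\epsilon_1+\gamma)}\{(w)\text{-}\int_T\partial_{\ell(t)}f_t(x)\,d\mu\}\big)$ and $x_2^{\ast}\in N^{\epsilon_2}_{\dom I_f}(x)$. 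The case $\varepsilon=0$ then follows by specialization: necessarily $\epsilon_1=\epsilon_2=0$, $N^{0}_{\dom I_f}(x)=N_{\dom I_f}(x)$, and $\bigcap_{\gamma>0}\cl^{w^{\ast}}\big(\bigcup_{\ell\in\mathcal I(\gamma)}\{(w)\text{-}\int_T\partial_{\ell(t)}f_t(x)\,d\mu\}\big)$ collapses onto $\cl^{w^{\ast}}\big(\{(w)\text{-}\int_T\partial f_t(x)\,d\mu\}\big)$ by a Br{\o}ndsted--Rockafellar-type density argument, replacing an $\ell(t)$-subgradient of $f_t$ at $x$ by an exact one with an error controlled by $\int_T\ell\,d\mu$, using continuity of $f_t$ at $x$.

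The main obstacle is the interplay, in the last two steps, between the infimum defining $I_f^{\ast}$---which is typically not attained---, the two nested $w^{\ast}$-closures, and the finite-dimensional reduction. It is precisely the non-attainment that forces the relaxation $\bigcap_{\gamma>0}\cl^{w^{\ast}}(\bigcup_{\ell\in\mathcal I(\epsilon_1+\gamma)}\cdots)$ instead of $\cl^{w^{\ast}}(\bigcup_{\ell\in\mathcal I(\epsilon_1)}\cdots)$, and the singular part of the $L^{\infty}$-functionals that survives as $N^{\epsilon_2}_{\dom I_f}(x)$ (which need not vanish even though each $f_t$ is continuous at $x$). Making the gluing along $\mathcal F(x)$ compatible with these closures, and ensuring that the assembled selection $t\mapsto m(t)$ remains $w^{\ast}$-integrable with weak integral in $X^{\ast}$, is where the bulk of the technical work lies.
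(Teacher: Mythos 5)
Your overall skeleton is the same as the paper's (finite-dimensional reduction in the spirit of \cite{INTECONV}, then a limit over the directed family $\mathcal{F}(x)$, with the two continuity hypotheses playing the roles you describe), and your ``$\supseteq$'' direction is fine. The genuine gap is in the reconstruction (``gluing'') step, which is where all the work of the theorem actually sits. First, the claimed consistency of the $L$-indexed decompositions under the restriction maps $X^{\ast}\to L^{\ast}$ is unjustified: the splitting $x^{\ast}|_{L}=a_{L}+b_{L}$ into an integral part and a normal-cone part is highly non-unique, so there is no projective system and no canonical limit; the only available device is to treat these as a net indexed by $(L,V)\in\mathcal{F}(x)\times\mathcal{N}_{0}(w^{\ast})$ and extract $w^{\ast}$-convergent subnets. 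Second, and more seriously, the compactness you invoke for that extraction is attributed to the wrong source: pointwise $w^{\ast}$-compactness of $\partial f_{t}(x)$ together with ``stability of the weak integral'' does not bound the integral parts $\int_{T}m_{L}\,d\mu$ in $X^{\ast}$ uniformly over the net (the function $t\mapsto\sup\{\|x^{\ast}\|:x^{\ast}\in\partial_{\ell(t)}f_{t}(x)\}$ need not be integrable, cf.\ the paper's own example after \cref{Banachintegralformula}), nor does it show that the lifted selections are Gelfand integrable at all. The paper gets exactly this from the continuity point $x_{0}$ of $I_{f}$: it replaces the selection coming from the reduction by a measurable selection of the face $G(t)=\{y^{\ast}\in\partial_{\ell(t)}f_{t}(x):\langle y^{\ast},x_{0}\rangle=\sigma_{\partial_{\ell(t)}f_{t}(x)}(x_{0})\}$, derives $\int_{T}|\langle x^{\ast}(t),v\rangle|\,d\mu<+\infty$ for all $v$ in a ball around $0$ from $I_{f}(x_{0}+v)<+\infty$, and then compares support functions at points of $\inte(\dom I_{f})\cap L$ before closing; in Step 3 the same continuity estimate gives the equi-boundedness $\langle x^{\ast}_{L,V},v\rangle\leq I_{f}(x_{0}+v)-I_{f}(x)+\epsilon-\langle x^{\ast},x_{0}\rangle+1$ that legitimizes Alaoglu. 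None of this is supplied by your argument, and without it the reconstruction of $x^{\ast}_{1}$ and $x^{\ast}_{2}$ fails.

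Two further points are asserted rather than proved and are of the same nature as the missing limit argument: the closedness in $\epsilon_{1}$ of the set of admissible splits (which you need for the finite-intersection/compactness selection of a single $(\epsilon_{1},\epsilon_{2})$; in the paper this is handled by passing to subnets $\epsilon_{1,L,V}\to\epsilon_{1}$), and the passage from $\ell_{L,V}\in\mathcal{I}(\epsilon_{1,L,V})$ at the $L$-level to membership of the limit in $\bigcap_{\gamma>0}\cl^{w^{\ast}}\bigl(\bigcup_{\ell\in\mathcal{I}(\epsilon_{1}+\gamma)}\{(w)\text{-}\int_{T}\partial_{\ell(t)}f_{t}(x)\,d\mu\}\bigr)$, which requires the cofinality observation that for each $\gamma>0$ one has $\ell_{L,V}\in\mathcal{I}(\epsilon_{1}+\gamma)$ eventually. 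Likewise, the convergence of the normal-cone parts $\lambda^{\ast}_{L,V}$ to an element of $N^{\epsilon_{2}}_{\dom I_{f}}(x)$ (not merely of the cones relative to $\dom I_{f}\cap L$) needs the same boundedness plus the fact that each fixed $y\in\dom I_{f}$ lies in $L$ cofinally. Until these steps are carried out, the ``$\subseteq$'' inclusion is not established.
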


\begin{proof}
	W.l.o.g.\ we suppose that\ $x=0$ and $\mu (T)<+\infty $. We divide the proof
	into three steps.
	
	\emph{Step 1:} We show in this step that for every $\ell \in \mathcal{I}%
	(\epsilon _{1})$, $\epsilon _{1}\geq 0,$ $t\rightrightarrows \partial _{\ell
		(t)}f_{t}(0)$ is a $w^{\ast }$-measurable multifunction with $w^{\ast }$%
	-compact and convex values. Indeed, the continuity assumption of the $f_{t}$%
	's ensures that the non-empty set $\partial _{\ell (t)}f_{t}(0)$ is $w^{\ast
	}$-compact and convex, as well as $\sigma _{\partial _{\ell
			(t)}f_{t}(0)}(u)=\inf\limits_{\lambda >0}\frac{f(t,0+\lambda u)-f(t,0)+\ell
		(t)}{\lambda }$ for all $u\in X.$ Hence, the function $t\rightarrow \sigma
	_{\partial _{\ell (t)}f(t,0)}(u)$ is measurable, and so is the multifunction 
	$t\rightrightarrows \partial _{\ell (t)}f_{t}(0).$
	
	\emph{Step 2:} We have that for every fixed $L\in \mathcal{F}(0)$ $$\partial _{\epsilon }I_{f}(0)\subseteq {%
		\cl}^{w^{\ast }}\left( \bigcup\limits_{\substack{ \epsilon =\epsilon
			_{1}+\epsilon _{2}  \\ \epsilon _{1},\epsilon _{2}\geq 0  \\ \ell \in 
			\mathcal{I}(\epsilon _{1})}}\left\{ (w)-\displaystyle\int\limits_{T}\partial
	_{\ell (t)}f_{t}(0)d\mu (t)+N_{ \dom I_{f}\cap L}^{\epsilon
		_{2}}(0)\right\} \right). $$  To prove
	this we take $x_{0}\in \inte( \dom I_{f})\cap L$ pick\ $x^{\ast }\in
	\partial _{\epsilon }I_{f}(0)$. By Theorem \cite[Theorem 4.1 and Remark 4.2]{INTECONV} and the continuity
	of $f(t,\cdot )$ there are $\epsilon _{1},\epsilon _{2}\geq 0$ with $%
	\epsilon =\epsilon _{1}+\epsilon _{2}$ and $\ell \in \mathcal{I}(\epsilon
	_{1})$ such that 
	\begin{equation*}
		x^{\ast }\in \int\limits_{T}\big(\partial _{\ell (t)}f_{t}(0)+\spn\{{\dom}I_{f}\cap L\}^{\perp }\big)d\mu (t)+N_{ \dom I_{f}\cap
			L}^{\epsilon _{2}}(0).
	\end{equation*}
	Hence, there exist an integrable function $x_{L}^{\ast }(t)\in \partial
	_{\ell (t)}f(t,0)+\spn\{ \dom I_{f}\cap L\}^{\perp }$ ae, and $%
	\lambda ^{\ast }\in N_{ \dom I_{f}\cap L}^{\epsilon _{2}}(0)$ such
	that $x^{\ast }=\int_{T}x_{L}^{\ast }d\mu +\lambda ^{\ast }$. Now, define
	the multifunction $G:T\rightrightarrows X^{\ast }$ as $$G(t):=\{y^{\ast }\in
	\partial _{\ell (t)}f(t,0):\langle y^{\ast },x_{0}\rangle =\sigma _{\partial
		_{\ell (t)}f(t,0)}(x_{0})\},$$
	where $x_0 $ is a  continuity point of $I_f$.  By \cite[Lemma 4.3]{MR2858100} the multifunction $G$ is $w^{\ast
	}$-measurable (with $w^{\ast }$-compact and convex values), and so
	by  \cite[Corollary 3.11]{MR2858100}  there exists a $w^{\ast }$%
	-measurable selection $x^{\ast }(\cdot )$ of $G$; moreover, we have that $%
	\langle x_{L}^{\ast }(t),x_{0}\rangle \leq \sigma _{\partial _{\ell
			(t)}f(t,0)+\spn\{ \dom I_{f}\cap L\}^{\perp }}(x_{0})=\sigma
	_{\partial _{\ell (t)}f(t,0)}(x_{0})=\langle x^{\ast }(t),x_{0}\rangle $. By
	the continuity of $I_{f}$ we choose $r>0$ such $x_{0}+\mathbb{B}(0,r)\subset %
	\inte( \dom I_{f})$. Then, for every $v\in \mathbb{B}(0,r)$, 
	\begin{align*}
		\langle x^{\ast }(t),v\rangle \leq & f(t,x_{0}+v)-f(t,0)+\ell (t)-\langle
		x^{\ast }(t),x_{0}\rangle \\
		=& f(t,x_{0}+v)-f(t,0)+\ell (t)-\sigma _{\partial _{\ell (t)}f(t,0)}(x_{0})
		\\
		\leq & f(t,x_{0}+v)-f(t,0)+\ell (t)-\langle x_{L}^{\ast }(t),x_{0}\rangle ,
	\end{align*}%
	and so, 
	\begin{align*}
		\int\limits_{T}|\langle x^{\ast }(t),v\rangle |d\mu (t)&\leq
		\int\limits_{T}\left( \max \{f(t,x_{0}+v),f(t,x_{0}-v)\}-f(t,0)\right)d\mu(t)\\& +	\int\limits_{T}\left(  \ell
		(t)-\langle x_{L}^{\ast }(t),x_{0}\rangle \right) d\mu (t)\\&<+\infty ;
	\end{align*}%
	that is, $x^{\ast }(\cdot )$ is Gelfand integrable ($X$ is Banach). This
	last inequality\ implies that 
	\begin{equation*}
		C:=\bigcup\limits_{\substack{ \epsilon =\epsilon _{1}+\epsilon _{2}  \\ %
				\epsilon _{1},\epsilon _{2}\geq 0  \\ \ell \in \mathcal{I}(\epsilon _{1})}}%
		\left\{ (w)\text{-}\int\limits_{T}\partial _{\ell (t)}f_{t}(0)d\mu (t)+N_{{	\dom}I_{f}\cap L}^{\epsilon _{2}}(0)\right\} \neq \emptyset .
	\end{equation*}%
	Because $\langle x^{\ast },x_{0}\rangle =\langle \int_{T}x_{L}^{\ast
	}(t)d\mu (t),x_{0}\rangle +\langle \lambda ^{\ast },x_{0}\rangle \leq
	\langle \int_{T}x^{\ast }(t)d\mu (t)+\lambda ^{\ast },x_{0}\rangle ,$ from
	the arbitrariness of $x^{\ast }\in \partial _{\epsilon }I_{f}(0)$ and $%
	x_{0}\ $in $\inte( \dom I_{f})\cap L$ we get\ 
	\begin{equation*}
		\sigma _{\partial _{\epsilon }I_{f}(0)}(x_{0})\leq \sigma _{C}(x_{0})\text{
			for every }x_{0}\in \inte( \dom I_{f})\cap L,
	\end{equation*}%
	which also implies by\ usual arguments that $\sigma _{\partial _{\epsilon
		}I_{f}(0)}(u)\leq \sigma _{C}(u)$ for all $u\in X,$ and the desired relation
	holds.
	
	\emph{Step 3: }We complete the proof of the theorem. We show now that\ $$%
	\partial _{\epsilon }I_{f}(0)=\bigcup\limits_{\substack{ \epsilon =\epsilon
			_{1}+\epsilon _{2}  \\ \epsilon _{1},\epsilon _{2}\geq 0}}\cl^{w^{\ast
	}}\bigcup\limits_{\substack{ \gamma \in \lbrack 0,\epsilon -\epsilon _{1}] 
			\\ \ell \in \mathcal{I}(\epsilon _{1}+\gamma )}}\left\{
	(w)\text{-}\int\limits_{T}\partial _{\ell (t)}f_{t}(0)d\mu (t)\right\} +N_{{%
			\dom}I_{f}}^{\epsilon _{2}}(0).$$ We take $x^{\ast }\in \partial _{\epsilon
	}I_{f}(0)$, so that by Step 2 there are nets of numbers $\epsilon
	_{1,L,V},\epsilon _{2,L,V}\geq 0$ with $\epsilon _{1,L,V}+\epsilon
	_{2,L,V}=\epsilon $ and $\ell _{L,V}\in \mathcal{I}(\epsilon _{1,L,V})$, together with
	vectors $$x_{L,V}^{\ast }\in (w)\text{-}\int_{T}\partial _{\ell
		_{0L,V}(t)}f_{t}(0)d\mu (t) \text{ and }\lambda _{L,V}^{\ast }\in N_{ \dom %
		I_{f}\cap L}^{\epsilon _{2,L,V}}(0),$$ indexed by $(L,V)\in \mathcal{F}%
	(0)\times \mathcal{N}_{0}(w^{\ast })$ such that $x^{\ast }=\lim
	x_{L,V}^{\ast }+\lambda _{L,V}^{\ast }$, where the limits is taken with respect to the $w^\ast$-topology. We may assume that $\epsilon
	_{1,L,V}\rightarrow \epsilon _{1}$ and $\epsilon _{2,L,V}\rightarrow
	\epsilon _{2},$ with $\epsilon _{1}+\epsilon _{2}=\epsilon $. Now, by the
	continuity of $I_{f}$ at $x_{0}$ there is some $r>0$ such that for every $%
	v\in B(0,r)$ and for every $L\ni x_{0}$ (w.l.o.g.)%
	\begin{align*}
		\langle x_{L,V}^{\ast },v\rangle \leq & I_{f}(x_{0}+v)-I_{f}(0)+\epsilon
		_{1,L,V}-\langle x_{L,V}^{\ast },x_{0}\rangle \\
		\leq & I_{f}(x_{0}+v)-I_{f}(0)+\epsilon _{1,L,V}-\langle x^{\ast
		},x_{0}\rangle +\langle \lambda _{L,V}^{\ast },x_{0}\rangle +1 \\
		\leq & I_{f}(x_{0}+v)-I_{f}(0)+\epsilon -\langle x^{\ast },x_{0}\rangle +1.
	\end{align*}%
	Therefore, we may suppose that $(x_{L,V}^{\ast })$ $w^{\ast }$-converges to
	some $y^{\ast }\in X^{\ast }$ and that $(\lambda _{L,V}^{\ast })$ $w^{\ast }$%
	-converges to some $\nu ^{\ast }\in X^{\ast };$ hence, $\nu ^{\ast }\in N_{%
		\dom I_{f}}^{\epsilon _{2}}(0).$ Finally, if $\epsilon =0,$ then the
	conclusion follows. Otherwise, if $\epsilon >0$, for every\ $\gamma >0$ we
	obtain that $\ell _{L,V}\in \mathcal{I}(\epsilon _{1}+\gamma )$ for a
	co-final family of indices $L,V$, and so the desired inclusion follows.
\end{proof}

The next result is a generalization of \cite{MR0372610}, see also \cite%
{MR0322531,MR0217502,Levin1968,zbMATH03263581} for other versions. First, we
need to make a remark about  the relation between the continuity of $I_{f}$ and
the $f_{t}$'s.

\begin{remark}
	\label{remc}\emph{Assume that either }$X$\emph{\ is Suslin or }$(T,\Sigma )=(%
	\mathbb{N},\mathcal{P}(\mathbb{N})).$\emph{\ Due to the relation}%
	\begin{equation*}
		{\inte}(\overline{{\dom}I_{f}})\subset {\inte}(%
		\overline{{\dom}f_{t}})\text{ for ae }t\in T,
	\end{equation*}%
	\emph{the continuity hypothesis used in  \cref{Exactintegralformula} below
		is equivalent to the continuity of the functions }$I_{f}$\emph{\ and }$%
	f(t,\cdot ),$\emph{\ }$t\in T,$\emph{\ at some common point. In particular,
		in the finite-dimensional setting, the continuity of }$I_{f}$\emph{\ alone
		ensures the continuity of the }$f_{t}$\emph{'s on the interior of their
		domains. }
\end{remark}

\begin{theorem}
	\label{Exactintegralformula}
	Assume that either $X$ is a Suslin space, or $%
	(T,\Sigma )=(\mathbb{N},\mathcal{P}(\mathbb{N})).$
	If each one of the functions $I_{f}$ and $f(t,\cdot ),$ $t\in T,$ is\
	continuous at some point. Then for every $x\in X$ and $\epsilon \geq 0.$\ 
	\begin{equation*}
		\partial _{\epsilon }I_{f}(x)=\bigcup\limits_{\substack{ \epsilon =\epsilon
				_{1}+\epsilon _{2}  \\ \epsilon _{1},\epsilon _{2}\geq 0  \\ \ell \in 
				\mathcal{I}(\epsilon _{1})}}\left\{ (w)\text{-}\int\limits_{T}\partial _{\ell
			(t)}f_{t}(x)d\mu (t)+N_{{\dom}I_{f}}^{\epsilon _{2}}(x)\right\} .
	\end{equation*}
\end{theorem}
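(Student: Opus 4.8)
The plan is to reduce \cref{Exactintegralformula} to the already-established \cref{corollaryAsplund} by a finite-dimensional approximation that, under the Suslin (or purely atomic) hypothesis, allows us to drop both the Asplund assumption and the extra normal-integrand requirement on every finite-dimensional restriction, and — crucially — to remove the outer $w^*$-closure and the intersection over $\gamma>0$ appearing in the general formula. First I would fix $x$ (w.l.o.g.\ $x=0$) and fix a point $x_0$ at which both $I_f$ and ae\ $f_t$ are continuous; by \cref{remc} this is exactly the stated hypothesis. The inclusion $\supseteq$ is the easy direction: if $x^*=(w)\text{-}\int_T m(t)\,d\mu(t)+\nu^*$ with $m(t)\in\partial_{\ell(t)}f_t(0)$ ae, $\ell\in\mathcal I(\epsilon_1)$, and $\nu^*\in N_{\dom I_f}^{\epsilon_2}(0)$, then for every $y\in X$ one has $\langle m(t),y\rangle\le f(t,y)-f(t,0)+\ell(t)$ ae; integrating (the left side is integrable because $m$ is $w^*$-integrable and the right side has integrable negative part near the continuity point, via a domination argument identical to Step~2 of the previous proof) and adding the $\delta_{\dom I_f}$-estimate from $\nu^*$ yields $\langle x^*,y\rangle\le I_f(y)-I_f(0)+\epsilon$, i.e.\ $x^*\in\partial_\epsilon I_f(0)$.

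For the nontrivial inclusion $\subseteq$, I would take $x^*\in\partial_\epsilon I_f(0)$ and run the localization argument of Step~2 above: for each finite-dimensional $L\ni x_0$ we obtain, from \cite[Theorem 4.1, Remark 4.2]{INTECONV}, a decomposition with an integrable selection $x_L^*(t)\in\partial_{\ell_L(t)}f(t,0)+\spn\{\dom I_f\cap L\}^\perp$ and $\lambda_L^*\in N_{\dom I_f\cap L}^{\epsilon_{2,L}}(0)$, and then project onto the genuine $\partial_{\ell_L(t)}f(t,0)$ by the support-function/measurable-selection trick at $x_0$, obtaining a $w^*$-measurable (hence, since $f(t,0)$ and $\ell_L(t)$ are finite near $x_0$, Gelfand-integrable by the same domination estimate) selection $x_L^*(\cdot)$ with $\langle x_L^*(t),x_0\rangle\le\sigma_{\partial_{\ell_L(t)}f(t,0)}(x_0)=\langle \tilde x_L^*(t),x_0\rangle$. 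The continuity of $I_f$ at $x_0$ gives a uniform bound $\langle \tilde x_L^*(t),v\rangle\le f(t,x_0\pm v)-f(t,0)+\ell_L(t)-\langle x^*,x_0\rangle+\langle\lambda_L^*,x_0\rangle+O(1)$ for $v$ in a fixed ball, which (after passing to a cofinal subnet so that $\epsilon_{1,L}\to\epsilon_1$, $\epsilon_{2,L}\to\epsilon_2$, $\tilde x_L^*\wto y^*$, $\lambda_L^*\wto\nu^*\in N_{\dom I_f}^{\epsilon_2}(0)$) keeps the net $(\tilde x_L^*)$ $w^*$-bounded and equi-Gelfand-integrable.

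The heart of the argument — and the step I expect to be the main obstacle — is to show that the $w^*$-limit $y^*$ of the net of Gelfand integrals $(w)\text{-}\int_T\tilde x_L^*(t)\,d\mu(t)$ is itself of the form $(w)\text{-}\int_T m(t)\,d\mu(t)$ with $m(t)\in\partial_{\ell(t)}f_t(0)$ ae for a single $\ell\in\mathcal I(\epsilon_1)$, i.e.\ that the set $\{(w)\text{-}\int_T\partial_{\ell(t)}f_t(0)\,d\mu(t):\ell\in\mathcal I(\epsilon_1)\}$ is already $w^*$-closed, so no outer closure is needed. This is exactly where the Suslin hypothesis enters: it guarantees that the multifunction $t\rightrightarrows\partial_{\ell(t)}f_t(0)$ admits enough measurable structure (Castaing representation, Aumann/von~Neumann selection) for a closedness/compactness result on Aumann integrals to apply. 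Concretely, I would pass to a subsequence $\ell_n$ of the net with $\ell_n\to\ell$ weakly in $L^1$ (possible since $\mathcal I(\epsilon_1)$ is bounded and uniformly integrable, using a Komlós/Dunford–Pettis argument and convexity to upgrade to ae convergence of Cesàro means), use a Fatou-type lemma for the $w^*$-measurable multifunctions to produce, from the $x_n^*(t)\in\partial_{\ell_n(t)}f_t(0)$, a limiting selection $m(t)$ with $m(t)\in\limsup_n\partial_{\ell_n(t)}f_t(0)\subseteq\partial_{\ell(t)}f_t(0)$ ae (the last inclusion by upper semicontinuity of $(\epsilon,t)\mapsto\partial_\epsilon f_t(0)$ and continuity of $f_t$ at $x_0$), and identify $(w)\text{-}\int_T m\,d\mu=y^*$. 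The alternative case $(T,\Sigma)=(\mathbb N,\mathcal P(\mathbb N))$ is handled directly: the integral is a (possibly infinite) sum, the subdifferential sum rule for a countable family of continuous convex functions applies, and the closure is automatic by a diagonal/summability argument, so the same conclusion holds without any Suslin machinery. Assembling these pieces gives $x^*\in (w)\text{-}\int_T\partial_{\ell(t)}f_t(0)\,d\mu(t)+N_{\dom I_f}^{\epsilon_2}(0)$ with $\ell\in\mathcal I(\epsilon_1)$, which is the announced formula.
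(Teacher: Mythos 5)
There is a genuine gap at what you yourself identify as the heart of the argument. Your plan is to show that the set $\bigl\{(w)\text{-}\int_T\partial_{\ell(t)}f_t(0)\,d\mu(t):\ell\in\mathcal I(\epsilon_1)\bigr\}$ is $w^{\ast}$-closed, via a Dunford--Pettis/Koml\'os extraction from $\mathcal I(\epsilon_1)$ and a Fatou-type lemma for the multifunctions $t\rightrightarrows\partial_{\ell(t)}f_t(0)$. But $\mathcal I(\epsilon_1)$ is only norm-bounded in $L^1$, not uniformly integrable (take $\ell_n=\epsilon_1 n\mathds{1}_{A_n}$ with $\mu(A_n)=1/n$), so it is not relatively weakly compact and no weakly convergent subnet in $L^1$ need exist; the same concentration phenomenon affects the selections $x_{L}^{\ast}(\cdot)$, for which your bounds give only $L^1$-boundedness of $t\mapsto\langle x_L^{\ast}(t),v\rangle$. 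Mass can escape into purely finitely additive (singular) functionals on $L^{\infty}$, and when it does the $w^{\ast}$-limit of the Gelfand integrals is \emph{not} of the form $(w)\text{-}\int_T m\,d\mu$ with $m(t)\in\partial_{\ell(t)}f_t(0)$; the Aumann integral set is in general not $w^{\ast}$-closed, which is precisely why the exact formula carries the extra term $N_{\dom I_f}^{\epsilon_2}(x)$ rather than a closure. In the discrete case $(\mathbb N,\mathcal P(\mathbb N))$ the same problem appears (bounded sequences in $\ell^1$ have no weakly convergent subsequences; mass escapes to infinity), so the ``automatic closure by a diagonal/summability argument'' does not hold either. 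In short, your scheme can at best reproduce the closed formula of \cref{corollaryAsplund}, not the exact one claimed here.

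The paper's proof resolves exactly this difficulty by a different mechanism: starting from \cite[Theorem 5.1]{INTECONV}, it embeds the nets $(\langle x_{n,i}^{\ast}(\cdot),x\rangle)_{x}$ and $(\epsilon_{n,i}(\cdot))$ into products of balls of $(L^{\infty}(T,\mathbb R))^{\ast}$ (Tychonoff), and then uses the decomposition $(L^{\infty})^{\ast}=L^1\oplus L^{\mathrm{sing}}$ to split the limits as $F_1+F_2$ and $\ell+s$. The absolutely continuous parts $F_1,\ell$ produce the integral term, while the singular parts are not discarded but shown to satisfy $F_2(x)(\mathds{1}_T)\le s(\mathds{1}_T)=\epsilon_2$ for $x\in\dom I_f$, i.e.\ they generate exactly an element of $N_{\dom I_f}^{\epsilon_2}(0)$. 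Moreover, the Suslin hypothesis is not used for a closedness result on Aumann integrals, but to convert the abstract operator $F_1:X\to L^1(T,\mathbb R)$ into a genuine weakly measurable selection $y^{\ast}(t)\in\partial_{\ell(t)}f_t(0)$: one works on a countable spanning set, obtains the subgradient inequality on a full-measure set, and extends by continuity of $f_t$ and Hahn--Banach. Your proposal contains no substitute for either of these two steps (absorbing the singular limits into the normal set, and reconstructing a pointwise a.e.\ selection from the limit object), so as written the key inclusion is not established.
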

\begin{proof}
	We fix $x\in X$ and $\varepsilon \geq 0,$ and choose a common continuity
	point $x_{0}$ of\ $I_{f}$ and the $f_{t}$'s (see  \cref{remc}).\ The
	right-hand side is straightforwardly included in $\partial _{\epsilon
	}I_{f}(x)$, and so we focus on the opposite inclusion. W.l.o.g. we may
	assume that $x=0$, $\partial _{\epsilon }\neq \emptyset ,$ $I_{f}(0)=0,$ as
	well as $\mu (T)=1$. Take\ $x^{\ast }\in \partial _{\epsilon }I_{f}(0)$, and
	fix\ a sequence of positive functions $(\eta _{n})_{n}\subset L^{\infty }(T,%
	\mathbb{R})$ which converges to zero. By \cite[Theorem 5.1]{INTECONV}, there
	exists a net of integrable functions $w_{n,L,V}^{\ast }(t)\in \partial
	_{\ell _{n,L,V}(t)+\eta _{n}(t)}f_{t}(0)+N_{ \dom I_{f}\cap
		L}^{\epsilon _{n,L,V,2}}(0)$, with $n\in \mathbb{N}$, $L\in \mathcal{F}(0)$, 
	$V\in \mathcal{N}_{0}$ and $\ell _{n,L,V}\in \mathcal{I}(\epsilon
	_{n,L,V,1}) $ such that 
	\begin{equation}
		x^{\ast }=\lim_{n,L,V}\int_{T}w_{n,L,V}^{\ast }(t)d\mu (t)\text{ and }%
		\epsilon _{n,L,V,1}+\epsilon _{n,L,V,2}=\epsilon .  \label{tt}
	\end{equation}%
	Next, as in the proof of \cite[Theorem 5.1]{INTECONV}, we find measurable
	functions\ $x_{n,L,V}^{\ast }$ and $\lambda _{n,L,V}^{\ast }$ such that $%
	x_{n,L,V}^{\ast }(t)\in \partial _{\eta _{n}(t)+\ell _{n,L,V}(t)}f(t,0)$ and 
	$\lambda _{n,L,V}^{\ast }(t)\in N_{ \dom I_{f}\cap L}^{\epsilon
		_{n,L,V,2}}(0)$ for ae, with $w_{n,L,V}^{\ast }(t)=x_{n,L,V}^{\ast
	}(t)+\lambda _{n,L,V}^{\ast }(t)$. To simplify the notation, we just write $%
	w_{n,i}^{\ast }(\cdot )$, $x_{n,i}^{\ast }(\cdot )$, $\epsilon _{n,i,1}$, $%
	\epsilon _{n,i,2}$, $\lambda _{n,i}^{\ast }(\cdot )$ and $\epsilon
	_{n,i}:=\ell _{n,i}+\eta _{n}$, $i\in I:=\mathcal{F}(0)\times \mathcal{N}%
	_{0},$ where $\mathbb{N\times }I$ is endowed with the partial order "$%
	\preceq $" given by $(n_{1},L_{1},V_{1})\preceq (n_{2},L_{2},V_{2})$ iff $%
	n_{1}\leq n_{2},$ $L_{1}\subset L_{2}$ and $V_{1}\supset V_{2}.$
	
	The rest of the proof is divided into three steps.

	\emph{Step 1:} W.l.o.g. on $n,i,$ there exists $U\in \mathcal{N}_{0}$ such
	that
	\begin{equation}
		\begin{array}{rl}
			m&:=\sup\limits_{v\in U,n\in \mathbb{N},i\in I}\displaystyle\int\limits_{T}\langle
			x_{n,i}^{\ast }(t),v\rangle d\mu (t)<+\infty \\
			
			m_{x}&:=\sup\limits_{n,i}%
			\displaystyle\int\limits_{T}|\langle x_{n,i}^{\ast }(t),x\rangle |d\mu (t)<+\infty
			\;\;\forall x\in X. 
		\end{array}
		\label{cl1}
	\end{equation}%
	Indeed, we choose\ $U\in \mathcal{N}_{0}$ such that $\sup\limits_{v\in
		V}I_{f}(x_{0}+v)<+\infty $. Then for every $n\in \mathbb{N}$, $i\in I$ and $%
	v\in U$ 
	\begin{align}
		\langle x_{n,i}^{\ast }(t),v\rangle & \leq f(t,x_{0}+v)-f(t,0)-\langle
		x_{n,i}^{\ast }(t),x_{0}\rangle +\epsilon _{n,i}(t)  \notag \\
		& =f(t,x_{0}+v)-f(t,0)-\langle w_{n,i}^{\ast }(t)-\lambda _{n,i}^{\ast
		}(t),x_{0}\rangle +\epsilon _{n,i}(t)  \notag \\
		& \leq f(t,x_{0}+v)-f(t,0)-\langle w_{n,i}^{\ast }(t),x_{0}\rangle +\epsilon
		_{n,i,2}+\epsilon _{n,i}(t).  \label{tt1}
	\end{align}%
	But, by \cref{tt} and the definition of $\epsilon _{n,i}$ $(\epsilon
	_{n,i}=\ell _{n,i}+\eta _{n})$, we may suppose that for all $n$ and $i$, 
	\begin{align}
		\begin{array}{rl}
			-\int_{T}\langle w_{n,i}^{\ast }(t),x_{0}\rangle +\int_{T}\epsilon
			_{n,i}(t)d\mu (t)+\epsilon _{n,i,2}&\leq -\langle x^{\ast },x_{0}\rangle
			+\epsilon +\int_{T}\eta _{n}d\mu +\frac{1}{2}\\
			&\leq -\langle x^{\ast
			},x_{0}\rangle +\epsilon +1, 
		\end{array}
		\label{tt2}
	\end{align}%
	and so \cref{tt1} leads to $\sup\limits_{v\in U,n\in \mathbb{N},i\in
		I}\int_{T}\langle x_{n,i}^{\ast }(t),v\rangle d\mu (t)<+\infty ,$ which is
	the first\ part of \cref{cl1}. Now, we\ define the sets $%
	T_{n,i,v}^{+}:=\{t\in T:\langle x_{n,i}^{\ast }(t),v\rangle \geq 0\}$, $%
	T_{n,i,v}^{-}:=\{t\in T:\langle x_{n,i}^{\ast }(t),v\rangle <0\}$, $n\in 
	\mathbb{N}$, $i\in I$ and $v\in U.$ Then, using \cref{tt1},%
	\begin{align*}
		\int_{T}|\langle x_{n,i}^{\ast }(t),v\rangle |d\mu (t)=&
		\int_{T_{n,i,v}^{+}}\langle x_{n,i}^{\ast }(t),v\rangle d\mu
		(t)-\int_{T_{n,i,v}^{-}}\langle x_{n,i}^{\ast }(t),v\rangle d\mu (t) \\
		\leq & \int_{T_{n,i,v}^{+}} ( f(t,x_{0}+v)-f(t,0)-\langle w_{n,i}^{\ast
		}(t),x_{0}\rangle ) d\mu (t) \\& +\int_{T_{n,i,v}^{+}}( \epsilon _{n,i,2}+\epsilon _{n,i})  d\mu (t) \\
		& +\int_{T_{n,i,v}^{-}} ( f(t,x_{0}-v)-f(t,0)-\langle w_{n,i}^{\ast
		}(t),x_{0}\rangle) d\mu (t) \\&  + \int_{T_{n,i,v}^{-}}(\epsilon _{n,i,2}+\epsilon _{n,i}) d\mu (t) \\
		=& \int_{T_{n,i,v}^{+}}f(t,x_{0}+v)d\mu
		(t)+\int_{T_{n,i,v}^{-}}f(t,x_{0}-v)d\mu (t) \\
		& -\langle x^{\ast },x_{0}\rangle +\epsilon +1\text{ \ \ (by \cref{tt2})} \\
		\leq & \int_{T}|f(t,x_{0}+v)|d\mu (t)\\&+\int_{T}|f(t,x_{0}-v)|d\mu (t)-\langle
		x^{\ast },x_{0}\rangle +\epsilon +1<+\infty ,
	\end{align*}%
	and the second part in \cref{cl1} follows since\ $U$ is absorbent.
	
	\emph{Step 2:} There exist $\epsilon _{1},\epsilon _{2},\epsilon _{3}\geq 0$
	with $\epsilon _{1}+\epsilon _{2}+\epsilon _{3}\leq \varepsilon ,$
	neighborhood $U\in \mathcal{N}_{0}$, $\lambda _{1}^{\ast }\in N_{{\dom%
		}I_{f}}^{\epsilon _{3}}(0),$ linear functions $F_{1}:X\rightarrow L^{1}(T,%
	\mathbb{R})$ and $F_{2}:X\rightarrow L^{\text{sing}}(T,\mathbb{R})$,
	together with\ elements $\ell \in L^{1}(T,\mathbb{R}_{+})$ and $s\in L^{%
		\text{sing}}(T,\mathbb{R})$ such that (w.l.o.g. on $n$ and $i$):
	
	\begin{enumerate}[label={(\roman*)},ref={(\roman*)}] 
		
		\item \label{e0}$\lambda _{1}^{\ast }=\lim_{n,i}\int_{T}\lambda _{n,i}^{\ast
		}(t)d\mu (t).$
		
		\item \label{e1}For every $x\in X,$\ $(\langle x_{n,i}^{\ast }(\cdot
		),x\rangle )_{n,i}\subset L^{1}(T,\mathbb{R})\subset L^{\infty }(T,\mathbb{R}%
		)^{\ast }$ and $\langle x_{n,i}^{\ast }(\cdot ),x\rangle \rightarrow
		F_{1}(x)+F_{2}(x)$ wrt to the $w^{\ast }$-topology in $L^{\infty }(T,\mathbb{%
			R})^{\ast }$.
		
		\item \label{e2}$(\epsilon _{n,i}(\cdot ))\subset L^{1}(T,\mathbb{R})\subset
		L^{\infty }(T,\mathbb{R})^{\ast }$ and $\epsilon _{n,i}(\cdot
		)\longrightarrow \ell +s$ wrt to the $w^{\ast }$-topology in $L^{\infty }(T,%
		\mathbb{R})^{\ast }$.
		
		\item \label{e3}$\sup\limits_{v\in U}\int\limits_{T}F_{1}(v)d\mu (t)<+\infty
		,$ $\sup\limits_{v\in U}F_{2}(v)(\mathds{1}_{T})<+\infty $.
		
		\item \label{e4}$\varepsilon _{1}:=\int_{T}\ell (t)d\mu (t),$ $\varepsilon
		_{2}:=s(\mathds{1}_{T})\geq 0$.
		
		\item \label{e5}For every $(x,A)\in  X\times \Sigma$
		\begin{equation}
			\int_{A}F_{1}(x)d\mu (t)\leq \int_{A}f(t,x)d\mu (t)-\int_{A}f(t,0)d\mu
			(t)+\int_{A}\ell (t)d\mu (t),
			\label{equationsubdiferential}
		\end{equation}%
		and 
		\begin{equation}
			F_{2}(x)(\mathds{1}_{T})\leq s(\mathds{1}_{T}),\text{ \ for all }x\in 
			\dom I_{f}.  \label{equationsubdiferential2}
		\end{equation}
	\end{enumerate}
	
	Consider $U$, $m$ and $(m_{x})_{x\in X}$ as in the previous step, and denote
	by $B$ the unit ball in the dual space of $L^{\infty }(T,\mathbb{R}).$ From \cref{cl1} and the definition of $x^{\ast },$ we obtain the existence of $%
	\lambda _{1}^{\ast }\in X^{\ast }$ such that (w.l.o.g.) $\lambda _{1}^{\ast
	}=\lim_{n,i}\int_{T}\left\langle \lambda _{n,i}^{\ast }(t),\cdot
	\right\rangle d\mu (t).$ Moreover, given an\ $x\in  \dom I_{f}$ we
	write, since $\lambda _{n,i}^{\ast }(t)\in N_{ \dom I_{f}\cap
		L}^{\epsilon _{n,i,2}}(0)$ and\ $x\in L$ (for $L$ large enough), 
	\begin{equation*}
		\left\langle \lambda _{1}^{\ast },x\right\rangle
		=\lim_{n,i}\int_{T}\left\langle \lambda _{n,i}^{\ast }(t),x\right\rangle
		d\mu (t)\leq \lim_{n,i}\epsilon _{n,i,2}=:\varepsilon _{3},
	\end{equation*}%
	and so $\lambda _{1}^{\ast }\in N_{ \dom I_{f}}^{\epsilon _{3}}(0);$
	hence, \cref{e0} follows.
	
	Next, by Thychonoff's theorem the space $$\mathfrak{X}:=\prod\limits_{x\in
		X}(m_{x}B,w^{\ast }((L^{\infty }(T,\mathbb{R}))^{\ast },L^{\infty }(T,%
	\mathbb{R})))$$ is a compact space with respect to the product topology, and so
	w.l.o.g. we may assume that the net $(\langle x_{n,i}^{\ast }(\cdot
	),x\rangle )_{x\in X}\in \mathfrak{X}$, $(n,i)\in \mathbb{N\times }I,$
	converges to some $(F(x))_{x\in X},$ where $F:X\rightarrow (L^{\infty }(T,%
	\mathbb{R}))^{\ast }$ is a linear function. Using the classical decomposition $%
	(L^{\infty }(T,\mathbb{R}))^{\ast }=L^{1}(T,\mathbb{R})\oplus L^{\text{sing}%
	}(T,\mathbb{R}),$ for every $x\in X$ we write $F(x)=F_{1}(x)+F_{2}(x),$
	where $F_{1}:X\rightarrow L^{1}(T,\mathbb{R})$ and $F_{2}:X\rightarrow L^{%
		\text{sing}}(T,\mathbb{R})$ are two linear functions, and \cref{e1} follows.
	Similary, since $(\epsilon _{n,i}(\cdot ))$ is bounded in $L^{1}(T,\mathbb{R}%
	)$ we may assume that it converges to some\ $l+s$, with $l\in L^{1}(T,%
	\mathbb{R})$ and $s\in L^{\text{sing}}(T,\mathbb{R}),$ such that for all $%
	G\in \Sigma $ 
	\begin{align}
		\int_{G}\ell (t)d\mu (t)+s(\mathds{1}_{G})&=\lim_{n,i}\int_{G}\epsilon
		_{n,i}(t)d\mu (t)=\lim_{n,i}\int_{G}(\ell _{n,i}(t)+\eta _{n}(t))d\mu
		(t)\\&=\lim_{n,i}\int_{G}\ell _{n,i}(t)d\mu (t)\leq \varepsilon -\varepsilon
		_{3}\mathnormal{.}  \label{we0}
	\end{align}%
	Fix\ $x\in X.$ Since $F_{2}(x),$ $s\in L^{\text{sing}}(T,\mathbb{R}),$ there
	exists a sequence of measurable sets $T_{n}(x)$ such that $\mu (T\backslash
	\bigcup T_{n}(x))=0$ and 
	\begin{equation*}
		F_{2}(x)(g\mathds{1}_{T_{n}(x)})=0,\text{ }s(g\mathds{1}_{T_{n}(x)})=0\text{
			for all }n\in \mathbb{N}\text{ and }g\in L^{\infty }(T,\mathbb{R}).
	\end{equation*}%
	Thus, by replacing in \cref{we0} the set $G$ by $T_{k}(x),$ $k\geq 1,$ and $%
	T\setminus \cup _{1\leq k\leq n}T_{k}(x),$ respectively, and taking the
	limit on $k$, \cref{e2} and \cref{e4} follow.
	
	Now, for every\ $v\in U,$ $G\in \Sigma $, $n\in \mathbb{N}$ and $(n,i)\in 
	\mathbb{N\times }I$ (recall \cref{cl1}) 
	\begin{equation*}
		\int\limits_{G}\langle x_{n,i}^{\ast }(t),x\rangle d\mu (t)\leq
		\int\limits_{G}f(t,x)d\mu (t)-\int\limits_{G}f(t,0)d\mu
		(t)+\int\limits_{G}\epsilon _{n,i}(t)d\mu (t),
	\end{equation*}%
	\begin{equation*}
		\int\limits_{G}\langle x_{n,i}^{\ast }(t),v\rangle d\mu (t)\leq m.
	\end{equation*}%
	So, by taking the limit we get 
	\begin{equation}
		\int\limits_{G}F_{1}(x)d\mu (t)+F_{2}(x)(\mathds{1}_{G})\leq
		\int\limits_{G}f(t,x)d\mu (t)-\int\limits_{G}f(t,0)d\mu (t)+\int_{G}l(t)d\mu
		(t)+s(\mathds{1}_{G}),  \label{we}
	\end{equation}%
	\begin{equation}
		\int\limits_{G}F_{1}(v)d\mu (t)+F_{2}(v)(\mathds{1}_{G})\leq m.  \label{we2}
	\end{equation}
	In particular, for\ $A\in \Sigma $ and\ $G_{n}=A\cap T_{n}(x)$ we get\ 
	\begin{equation*}
		\int\limits_{G_{n}}F_{1}(x)d\mu (t)\leq \int\limits_{G_{n}}f(t,x)d\mu
		(t)-\int\limits_{G_{n}}f(t,0)d\mu (t)+\int\limits_{G_{n}}l(t)d\mu (t),
	\end{equation*}%
	\begin{equation*}
		\int\limits_{G_{n}}F_{1}(v)d\mu (t)=\int\limits_{G_{n}}F_{1}(v)d\mu
		(t)+F_{2}(v)(\mathds{1}_{G_{n}})\leq m
	\end{equation*}%
	which as $n\rightarrow \infty $ gives us 
	\begin{equation}
		\int\limits_{A}F_{1}(v)d\mu (t)\leq m,  \label{q}
	\end{equation}%
	and\ 
	\begin{equation*}
		\int\limits_{A}F_{1}(x)d\mu (t)\leq \int\limits_{A}f(t,x)d\mu
		(t)-\int\limits_{A}f(t,0)d\mu (t)+\int\limits_{A}l(t)d\mu (t),
	\end{equation*}%
	yielding the first part in \cref{equationsubdiferential}. Now, for $%
	G_{n}=T\backslash \bigcup\limits_{i=1}^{n}T_{i}(x)$ we have that $$F_{2}(x)(%
	\mathds{1}_{G_{n}})=F_{2}(x)(\mathds{1}_{T})\text{ and } s(\mathds{1}_{G_{n}})=s(%
	\mathds{1}_{T}).$$  Furthermore, for $v\in U$ and $x\in  \dom I_{f}$
	\begin{equation*}
		\int\limits_{G_{n}}F_{1}(v)d\mu (t),\int\limits_{G_{n}}F_{1}(x)d\mu
		(t),\int\limits_{G_{n}}f(t,x)d\mu (t),\int\limits_{G_{n}}f(t,0)d\mu
		(t),\int\limits_{G_{n}}l(t)d\mu (t)\rightarrow _{n}0,
	\end{equation*}%
	and so, \cref{we} and \cref{we2} yield $F_{2}(x)(\mathds{1}_{T})\leq s(\mathds{1}_{T})$, and 
	\begin{equation}
		F_{2}(v)(\mathds{1}_{T})\leq m.  \label{q2}
	\end{equation}%
	We get \cref{e5}, while \cref{e3} follows from \cref{q} and \cref{q2}.
	
	\emph{Step 3: }Let $\ell ,$ $s$, $\epsilon _{1},\epsilon _{2},$ $\epsilon
	_{3},$ $U\in \mathcal{N}_{0}$, $\lambda _{1}^{\ast }\in N_{ \dom %
		I_{f}}^{\epsilon _{3}}(0),$ $F_{1},$ and $F_{2}\ $be as in step 2. We show
	the existence of a weakly integrable function $y^{\ast }:T\rightarrow
	X^{\ast }$ such that $y^{\ast }(t)\in \partial _{\ell (t)}f_{t}(0)$ ae, and $%
	x^{\ast }-\int_{T}y^{\ast }d\mu \in N_{ \dom I_{f}}^{\epsilon
		_{2}}(0) $.
	Assume first that $(T,\Sigma )=(\mathbb{N},\mathcal{P}(\mathbb{N})).$ In
	this case, on the one hand we\ take $y^{\ast }(t):=F_{1}(\cdot )(t),$ $t\in
	T.$ Then, for every $x\in X,$ by \cref{equationsubdiferential} we have that
	for all $t\in T$ 
	\begin{equation*}
		\left\langle y^{\ast }(t),x\right\rangle =F_{1}(x)(t)=(\mu
		(t))^{-1}\int_{\{t\}}F_{1}(x)d\mu (t)\leq f(t,x)-f(t,0)+\ell (t),
	\end{equation*}%
	which, by taking into account\ the continuity assumption on $f(t,\cdot )$,
	shows that $y^{\ast }(t)\in \partial _{\ell (t)}f_{t}(0).$ Also, since  $%
	\int_{T}\left\vert \left\langle y^{\ast },x\right\rangle \right\vert d\mu
	=\int_{T}\left\vert F_{1}(x)(t)\right\vert d\mu (t)<+\infty ,$ for all $x\in
	X,$ and (by \cref{e3}) 
	\begin{equation*}
		\sup\limits_{v\in U}\int_{T}\left\langle y^{\ast },v\right\rangle d\mu
		=\sup\limits_{v\in U}\int\limits_{T}F_{1}(v)d\mu (t)<+\infty ,
	\end{equation*}%
	it follows that $y^{\ast }:=\int_{T}y^{\ast }d\mu \in (w)$-$%
	\int\limits_{T}\partial _{\ell (t)}f_{t}(x)d\mu (t).$ On the other hand,\ we
	take $\lambda ^{\ast }:=\lambda _{1}^{\ast }+\lambda _{2}^{\ast },$ with $%
	\lambda _{2}^{\ast }:=F_{2}(\cdot )(\mathds{1}_{T})$ ($\in X^{\ast },$ by %
	\cref{e3}), so that for all $x\in  \dom I_{f}$ (using \cref{equationsubdiferential2})%
	\begin{equation*}
		\left\langle \lambda _{2}^{\ast },x\right\rangle =F_{2}(x)(\mathds{1}%
		_{T})\leq s(\mathds{1}_{T})=\varepsilon _{2}.
	\end{equation*}%
	Hence, $\lambda ^{\ast }\in N_{ \dom I_{f}}^{\epsilon _{3}}(0)+N_{%
		\dom I_{f}}^{\epsilon _{2}}(0)\subset N_{ \dom %
		I_{f}}^{\epsilon _{2}+\epsilon _{3}}(0),$ and so we get 
	\begin{eqnarray*}
		x^{\ast } &=&\lim_{n,L,V}\int_{T}(x_{n,L,V}^{\ast }(t)+\lambda
		_{n,L,V}^{\ast }(t))d\mu (t) \\
		&=&\lim_{n,L,V}\int_{T}x_{n,L,V}^{\ast }(t)d\mu (t)+\lim_{n,L,V}\lambda
		_{n,L,V}^{\ast }(t)d\mu (t) \\
		&=&y^{\ast }+\lambda _{2}^{\ast }+\lambda _{1}^{\ast }\in
		(w)\text{-}\int\limits_{T}\partial _{\ell (t)}f_{t}(x)d\mu (t)+N_{ \dom %
			I_{f}}^{\epsilon _{2}+\epsilon _{3}}(0),
	\end{eqnarray*}%
	which ensures the desired inclusion.
	
	We treat now\ the case when $X,X^{\ast }$ are Suslin spaces. We choose a
	countable set $D$ such that $X=\{\lim x_{n}:(x_{n})_{n\in \mathbb{N}}\subset
	D\}.$ Equivalently, we can take an at most countable\ family of linearly
	independent vectors\ $\{e_{i}\}_{i=1}^{\infty }$ such that $L:=\spn%
	\{e_{i}\}_{i=1}^{\infty }\supseteq D$ and $L_{n}:=\spn\{e_{i}\}_{i=1}^{n}\ni
	x_{0}$ for all $n\geq 1$ (recall that $x_{0}$ is a common continuity point
	of\ $I_{f}$ and the $f_{t}$'s). As in the previous discrete case, we take\ $%
	\lambda _{2}^{\ast }=F_{2}(\cdot )(\mathds{1}_{T})$. So, by argyuing as
	above we obtain that $\lambda ^{\ast }:=\lambda _{1}^{\ast }+\lambda
	_{2}^{\ast }\in N_{ \dom I_{f}}^{\epsilon _{2}+\epsilon _{3}}(0).$
	Next, we consider a sequence of functions $(b_{n})_{n}$ such that each $%
	b_{n} $ is in the class of equivalence $F_{1}(e_{n}),$ and define\ for every 
	$t\in T$ a linear function $y_{t}^{\ast }:L\rightarrow \mathbb{R}$ as $%
	\langle y_{t}^{\ast },z\rangle =\sum\limits_{i=1}^{n}\alpha _{i}b_{i}(t)$ $%
	(\in F_{1}(\sum\limits_{i=1}^{n}\alpha _{i}e_{n})(t)=F_{1}(z)(t)),$ where $%
	z=\sum\limits_{i=1}^{n}\alpha _{i}e_{i}$, $\alpha _{i}\in \mathbb{R}$. We
	notice that for every $z\in L,$ $t\rightarrow \langle y_{t}^{\ast },z\rangle 
	$ is measurable. Now, given\ $z\in L_{\mathbb{Q}}:=\bigoplus\limits_{i=1}^{%
		\infty }\mathbb{Q}e_{i},$ we define $T_{z}:=\{t\in T\mid \langle y_{t}^{\ast
	},z\rangle \leq f(t,z)-f(t,0)+\ell (t)\}$ and $\tilde{T}:=\bigcap\limits_{z%
		\in L_{\mathbb{Q}}}T_{z};$ hence, $\mu (T\backslash \tilde{T})=0$, by
	\cref{equationsubdiferential}. Now, because $\inte \dom %
	f_{t}\cap L_{\mathbb{Q}}\neq \emptyset $ and $f_{t}$ is continuous on $\inte%
	\dom f_{t},$ it follows that 
	\begin{equation*}
		\langle y_{t}^{\ast },z\rangle \leq f(t,z)-f(t,0)+l(t)\text{ \ for all }t\in 
		\tilde{T}\ \text{and }z\in L;
	\end{equation*}%
	in particular, $y_{t}^{\ast }$ is a continuous linear functional on $L$ for
	every $t\in \tilde{T}$. Now, by the Hahn-Banach theorem, we can extend $%
	y_{t}^{\ast }$ to a continuous linear functional on $X$, denoted by $y^{\ast
	}(t)$, such that 
	\begin{equation*}
		\langle y^{\ast }(t),z\rangle \leq f(t,z)-f(t,0)+l(t)\text{ \ for all }t\in 
		\tilde{T}\ \text{and }z\in L.
	\end{equation*}%
	We notice that $y^{\ast }(\cdot )$ is weakly measurable, because for every\ $%
	x\in X$, since $D\subset L$ there exists a sequence of element $x_{n}\in L$
	such that $x_{n}\rightarrow x$ and, hence, $\langle y^{\ast }(t),x\rangle
	=\lim_{n}\langle w_{t}^{\ast },x_{n}\rangle $ is measurable as is each
	function $t\rightarrow \langle w_{t}^{\ast },x_{n}\rangle $. Moreover, by
	the continuity of $f_{t}$ on $\inte \dom f_{t},$ the last inequality
	above holds on $X,$ and this gives us $y^{\ast }(t)\in \partial
	_{l(t)}f_{t}(0)$ for all $t\in \tilde{T}.$ Now, by the continuity of $I_{f}$
	and using similar arguments as those in the proof of \cref{corollaryAsplund}, it is not difficult to show that $%
	y^{\ast }(t)$ is weakly integrable and that $y^{\ast }:=\int_{T}y^{\ast
	}(t)\mu (t)$ defines a continuous linear operator on $X$, showing that $%
	y^{\ast }\in (w)$-$\int\limits_{T}\partial _{\ell (t)}f_{t}(x)d\mu (t).$
	Whence $x^{\ast }=y^{\ast }+\lambda ^{\ast }\in (w)$-$\int\limits_{T}%
	\partial _{\ell (t)}f_{t}(x)d\mu (t)+N_{ \dom I_{f}}^{\epsilon
		_{2}+\epsilon _{3}}(0)$. The proof of the theorem is finished.
\end{proof}

Now we give a formula for the $\epsilon$-subdifferential of $I_f$ under a stronger qualification condition. Consider a Banach space $X$, we denote by $\hat{I}_{f}: L^{\infty
}(T,X) \to \Rex$ the integral functional
\begin{equation*}
	x(\cdot )\in L^{\infty
	}(T,X) \rightarrow \hat{I}_{f}(x(\cdot
	)):=\int\limits_{T}f(t,x(t))d\mu (t).
\end{equation*}%

\begin{theorem}
	\label{Banachintegralformula}Assume that $X$ is a separable Banach space.
	If\ $\hat{I}_{f}$ is bounded above on some neighborhood wrt $(L^{\infty
	}(T,X),\Vert \cdot \Vert )$ of some constant function $x_{0}(\cdot )\equiv
	x_{0}\in X,$ then for all $x\in X$ and $\varepsilon \geq 0$ we have that 
	\begin{equation*}
		\partial _{\epsilon }I_{f}(x)=\bigcup\limits_{\substack{ \epsilon =\epsilon
				_{1}+\epsilon _{2}  \\ \epsilon _{1},\epsilon _{2}\geq 0  \\ \ell \in 
				\mathcal{I}(\epsilon _{1})}}\left\{ \int\limits_{T}x^{\ast }d\mu :x^{\ast
		}\in L_{w^{\ast }}^{1}(T,X^{\ast }),\text{ }x^{\ast }(t)\in \partial _{\ell
			(t)}f_{t}(x)\text{ ae}\right\} +N_{ \dom I_{f}}^{\epsilon _{2}}(x).
	\end{equation*}%
	If, in addition, $X$ is reflexive, then 
	\begin{equation*}
		\partial _{\epsilon }I_{f}(x)=\bigcup\limits_{\substack{ \epsilon =\epsilon
				_{1}+\epsilon _{2}  \\ \epsilon _{1},\epsilon _{2}\geq 0  \\ \ell \in 
				\mathcal{I}(\epsilon _{1})}}\int\limits_{T}\partial _{\ell (t)}f_{t}(x)d\mu
		(t)+N_{ \dom I_{f}}^{\epsilon _{2}}(x).
	\end{equation*}
\end{theorem}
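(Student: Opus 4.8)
The plan is to transport the problem to the integral functional $\hat{I}_{f}$ on the Banach space $L^{\infty}(T,X)$, use a chain rule, and come back. Throughout we may assume, by the usual $\sigma$-finiteness reduction, that $\mu(T)<+\infty$, and we fix $x\in X$ and $\varepsilon\ge0$; moreover we may assume $x\in\dom I_{f}$, since otherwise both sides are empty (for $x\notin\dom I_{f}$ the set $N^{\epsilon_{2}}_{\dom I_{f}}(x)=\partial_{\epsilon_{2}}\delta_{\dom I_{f}}(x)$ is empty). The inclusion ``$\supseteq$'' is routine: if $\ell\in\mathcal{I}(\epsilon_{1})$, $x^{\ast}\in L^{1}_{w^{\ast}}(T,X^{\ast})$ with $x^{\ast}(t)\in\partial_{\ell(t)}f_{t}(x)$ a.e., and $\nu^{\ast}\in N^{\epsilon_{2}}_{\dom I_{f}}(x)$ with $\epsilon_{1}+\epsilon_{2}=\varepsilon$, then integrating $\langle x^{\ast}(t),y-x\rangle\le f_{t}(y)-f_{t}(x)+\ell(t)$ over $t\in T$ and adding $\langle\nu^{\ast},y-x\rangle\le\epsilon_{2}$ yields $\langle\int_{T}x^{\ast}d\mu+\nu^{\ast},y-x\rangle\le I_{f}(y)-I_{f}(x)+\varepsilon$ for all $y\in\dom I_{f}$, the inequality being trivial for $y\notin\dom I_{f}$. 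We therefore focus on ``$\subseteq$''. Consider the linear isometric embedding $\iota\colon X\to L^{\infty}(T,X)$ sending $y$ to the constant function $\equiv y$, so that $I_{f}=\hat{I}_{f}\circ\iota$. Since each $f_{t}$ is convex, $\hat{I}_{f}$ is convex, and the standing hypothesis says exactly that $\hat{I}_{f}$ is bounded above on a $\|\cdot\|_{\infty}$-ball around $\iota(x_{0})$, hence continuous at $\iota(x_{0})\in\iota(X)$. In particular $\hat{I}_{f}$, and with it $I_{f}$, is proper.

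Next I use the chain rule. Because $\hat{I}_{f}$ is convex, proper and continuous at a point of the range of the continuous linear operator $\iota$, the exact conjugation formula $(\hat{I}_{f}\circ\iota)^{\ast}(z^{\ast})=\min\{\hat{I}_{f}^{\ast}(\lambda^{\ast}):\iota^{\ast}\lambda^{\ast}=z^{\ast}\}$ holds, with the infimum attained (Moreau--Rockafellar; see e.g. \cite{MR0467080}). Writing out the definition of the $\varepsilon$-subdifferential and using that $(\hat{I}_{f}\circ\iota)(x)=I_{f}(x)$ is finite, this gives, for every $x\in X$ and $\varepsilon\ge0$,
\begin{equation*}
	\partial_{\varepsilon}I_{f}(x)=\iota^{\ast}\bigl(\partial_{\varepsilon}\hat{I}_{f}(\iota(x))\bigr).
\end{equation*}
So it remains to describe $\partial_{\varepsilon}\hat{I}_{f}(\iota(x))$ and to compute its image under $\iota^{\ast}$.

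For the first point I appeal to the $\varepsilon$-subdifferential formula for convex integral functionals on $L^{\infty}$ --- the $\varepsilon$-version of the Rockafellar--Ioffe--Levin--Valadier theorems, available here precisely because $\hat{I}_{f}$ is continuous at $\iota(x_{0})$; cf.\ \cite{MR0372610} and the references therein, as well as \cref{BronstedRockafellartheorems}. Using the decomposition $L^{\infty}(T,X)^{\ast}=L^{1}_{w^{\ast}}(T,X^{\ast})\oplus L^{\text{sing}}(T,X)$ recalled in \cref{Pre}, every $\lambda^{\ast}\in\partial_{\varepsilon}\hat{I}_{f}(\iota(x))$ splits as $\lambda^{\ast}=\lambda_{1}^{\ast}+\lambda_{2}^{\ast}$ with $\lambda_{1}^{\ast}\in L^{1}_{w^{\ast}}(T,X^{\ast})$ and $\lambda_{1}^{\ast}(t)\in\partial_{\ell(t)}f_{t}(x)$ a.e. for some $\ell\in\mathcal{I}(\epsilon_{1})$, and $\lambda_{2}^{\ast}\in L^{\text{sing}}(T,X)\cap N^{\epsilon_{2}}_{\dom\hat{I}_{f}}(\iota(x))$, where $\epsilon_{1}+\epsilon_{2}=\varepsilon$. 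I expect this to be the main obstacle: the continuity assumption is what forces the absolutely continuous part of a subgradient to be, up to the $\ell(t)$-slack, a pointwise a.e.\ $\ell(t)$-subgradient of $f_{t}$ at $x$, and what makes the singular part supported by $\dom\hat{I}_{f}$; and in the present generality (with $X^{\ast}$ not necessarily norm-separable) this integral-functional theorem may have to be re-derived --- via the measurable-selection machinery of \cref{Pre} together with Br{\o}ndsted--Rockafellar-type arguments --- rather than merely quoted.

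Finally I apply $\iota^{\ast}$. For $\lambda_{1}^{\ast}\in L^{1}_{w^{\ast}}(T,X^{\ast})$ one has $\iota^{\ast}\lambda_{1}^{\ast}=\int_{T}\lambda_{1}^{\ast}d\mu$, a well-defined element of $X^{\ast}$ (Gelfand integral) since $X$ is Banach, and it belongs to $\{\int_{T}x^{\ast}d\mu:x^{\ast}\in L^{1}_{w^{\ast}}(T,X^{\ast}),\ x^{\ast}(t)\in\partial_{\ell(t)}f_{t}(x)\text{ a.e.}\}$. For the singular part, $\langle\iota^{\ast}\lambda_{2}^{\ast},y\rangle=\lambda_{2}^{\ast}(y\1_{T})$ defines an element of $X^{\ast}$, and since $\iota(\dom I_{f})\subseteq\dom\hat{I}_{f}$ one checks that $\iota^{\ast}\lambda_{2}^{\ast}\in N^{\epsilon_{2}}_{\dom I_{f}}(x)$. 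Summing, every $x^{\ast}\in\partial_{\varepsilon}I_{f}(x)$ has the claimed form, which proves the first formula. The reflexive case is then immediate: if $X$ is separable and reflexive, then $X^{\ast}$ is separable, so by the discussion in \cref{Pre} (see \cite[\S II, Theorem 2]{MR0453964}) $\mathcal{L}^{1}_{w^{\ast}}(T,X^{\ast})=\mathcal{L}^{1}(T,X^{\ast})$ and the Gelfand integral coincides with the Bochner integral; hence the above selection set equals the strong Aumann integral $\int_{T}\partial_{\ell(t)}f_{t}(x)d\mu(t)$, and the second formula follows from the first.
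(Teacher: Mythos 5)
Your reduction to $\hat{I}_{f}$ via the constant-embedding $\iota:X\to L^{\infty}(T,X)$ and the exact $\varepsilon$-subdifferential chain rule $\partial_{\varepsilon}I_{f}(x)=\iota^{\ast}\bigl(\partial_{\varepsilon}\hat{I}_{f}(\iota(x))\bigr)$ is legitimate (the Moreau--Rockafellar qualification does hold, since $\hat{I}_{f}$ is continuous at $\iota(x_{0})$), and the treatment of the singular part under $\iota^{\ast}$ and of the reflexive case are fine. The problem is the step you yourself flag: the description of $\partial_{\varepsilon}\hat{I}_{f}(\iota(x))$ --- absolutely continuous part $\lambda_{1}^{\ast}\in L^{1}_{w^{\ast}}(T,X^{\ast})$ with $\lambda_{1}^{\ast}(t)\in\partial_{\ell(t)}f_{t}(x)$ a.e.\ for some $\ell\in\mathcal{I}(\epsilon_{1})$, singular part in $N^{\epsilon_{2}}_{\dom\hat{I}_{f}}(\iota(x))$, with $\epsilon_{1}+\epsilon_{2}=\varepsilon$ --- is not an off-the-shelf citation in the generality you need (separable Banach $X$ with possibly non-norm-separable $X^{\ast}$, $\varepsilon\geq 0$ with the integrable slack $\ell$), and you do not prove it. That Rockafellar--Ioffe--Levin-type statement is essentially the entire content of the theorem: establishing it requires exactly the measurable-selection and norm-integrability arguments (driven by the boundedness of $\hat{I}_{f}$ on an $L^{\infty}$-ball) that constitute the proof one is asked to give, so as written the argument is circular at its core rather than complete.

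For comparison, the paper does not go through the dual of $L^{\infty}(T,X)$ at all. It observes that the hypothesis forces $I_{f}$ and a.e.\ $f_{t}$ to be continuous at $x_{0}$, so \cref{Exactintegralformula} applies ($X$ separable Banach is Suslin), and the only thing left to prove is the inclusion
\begin{equation*}
(w)\text{-}\int\limits_{T}\partial_{\ell(t)}f_{t}(x)\,d\mu(t)\subseteq\Bigl\{\int\limits_{T}x^{\ast}d\mu : x^{\ast}\in L^{1}_{w^{\ast}}(T,X^{\ast}),\ x^{\ast}(t)\in\partial_{\ell(t)}f_{t}(x)\ \text{ae}\Bigr\},
\end{equation*}
i.e.\ that any $w^{\ast}$-measurable a.e.\ selection defining an element of the weak integral is automatically norm-integrable. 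This is where the $L^{\infty}$-boundedness is used: by the Castaing--Valadier selection theorem one picks a measurable $x(\cdot)$ with $\Vert x(t)\Vert\leq 1$ nearly attaining $\Vert x^{\ast}(t)\Vert$, and the bound $\int_{T}f_{t}(x_{0}+\delta x(t))\,d\mu(t)<+\infty$ for small $\delta$ yields $\int_{T}\Vert x^{\ast}(t)\Vert\,d\mu(t)<+\infty$. If you want to salvage your route, you would have to supply a full proof of the $\varepsilon$-subdifferential formula for $\hat{I}_{f}$ on $L^{\infty}(T,X)$ (which amounts to redoing the work behind \cref{Exactintegralformula} plus the singular-part analysis); the shorter path is to invoke \cref{Exactintegralformula} directly and prove only the norm-integrability upgrade above.
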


\begin{proof}
	First, observe that the current continuity assumption of $\hat{I}_{f}$
	implies that both $I_{f}$ and the functions $f_{t},$ for ae $t\in T,$ are
	continuous at $x_{0}.$ Then, according to  \cref{Exactintegralformula}, to prove the first part\ we only need to verify\
	that, for every $x\in  \dom I_{f},$ $\epsilon _{1}\geq 0$ and $\ell
	\in \mathcal{I}(\epsilon _{1}),$ 
	\begin{equation}
		(w)\text{-}\int\limits_{T}\partial _{\ell (t)}f_{t}(x)d\mu (t)\subset \left\{
		\int\limits_{T}x^{\ast }d\mu :x^{\ast }\in L_{w^{\ast }}^{1}(T,X^{\ast }),%
		\text{ }x^{\ast }(t)\in \partial _{\ell (t)}f_{t}(x)\text{ ae}\right\} .
		\label{inc}
	\end{equation}%
	Take $x^{\ast }:=\int_{T}x^{\ast }d\mu $ for a $w^{\ast }$-measurable\
	function $x^{\ast }(\cdot )$ such that $x^{\ast }(t)\in \partial _{\ell
		(t)}f_{t}(x)$ ae. By  \cite[Theorem III.22.]{MR0467310}, for each
	function $\alpha \in L^{1}(T,(0,+\infty))$ there exists a measurable
	function $x:T\rightarrow X$ such that $\left\Vert x(t)\right\Vert \leq 1$
	and 
	\begin{equation*}
		\left\langle x^{\ast }(t),x(t)\right\rangle \geq \left\Vert x^{\ast
		}(t)\right\Vert -\alpha (t)\text{ ae;}
	\end{equation*}%
	hence, $\int_{T}\left\Vert x^{\ast }(t)\right\Vert d\mu (t)\leq
	\int_{T}\left\langle x^{\ast }(t),x(t)\right\rangle d\mu (t)+\int_{T}\alpha
	(t)d\mu (t).$ Since, by the continuity of $\hat{I}_{f}$ at $x_{0}$ there are 
	$\delta ,M>0$ such that 
	\begin{eqnarray*}
		\delta \int_{T}\left\langle x^{\ast }(t),x(t)\right\rangle d\mu (t) &\leq
		&\int_{T}f_{t}(\delta x(t)+x_{0})d\mu (t)-I_{f}(x)+\int_{T}\left\langle
		x^{\ast }(t),x-x_{0}\right\rangle d\mu (t)+\varepsilon _{1} \\
		&\leq &I_{f}(x_{0})-I_{f}(x)+1+\int_{T}\left\langle x^{\ast
		}(t),x-x_{0}\right\rangle d\mu (t)+\varepsilon _{1}\leq M,
	\end{eqnarray*}%
	we obtain that $\int_{T}\left\Vert x^{\ast }(t)\right\Vert d\mu (t)\leq
	\delta ^{-1}M+\int_{T}\alpha (t)d\mu (t)<+\infty ,$ and \cref{inc} holds.
	
	Finally, the last statement follows because $L_{w^{\ast }}^{1}(T,X^{\ast
	})=L^{1}(T,X^{\ast }).$\ 
\end{proof}

The next example shows that the second\ formula\ of  \cref{Banachintegralformula} cannot be valid if we drop the continuity of $%
\hat{I}_{f}.$

\begin{example}
	Consider $(T,\Sigma )=(\mathbb{N},\mathcal{P}(\mathbb{N}))$ and $X=\ell
	^{2}, $ and let\ $(e_{n})_{n}$\ be the canonical basis of $\ell ^{2},$ and $%
	\mu $ be the finite measure given by\ $\mu (\{n\})=2^{-n}.$ Define the
	integrand $f:\mathbb{N}\times \ell ^{2}\rightarrow \mathbb{R}$ as $%
	f(n,x):=2^{n}(\langle e_{n},x\rangle )^{2}$, so that $I_{f}(x)=\int_{\mathbb{%
			N}}f(n,x)d\mu (n)=\sum\limits_{n\in \mathbb{N}}x_{n}^{2}=\Vert x\Vert ^{2}$.
	Then $I_{f}$ is differentiable on $X$ with $\nabla I_{f}(x)=2x$, and for all 
	$n\geq 1$ we have that $\partial f_{n}(x)=\{\nabla
	f_{n}(x)\}=\{2^{n+1}\langle x,e_{n}\rangle e_{n}\},$ so that 
	\begin{equation*}
		\nabla I_{f}(x)=\int_{\mathbb{N}}2^{n+1}\langle x,e_{n}\rangle e_{n}d\mu
		(n)=2\sum_{n\in \mathbb{N}}\langle x,e_{n}\rangle e_{n}=2x,
	\end{equation*}%
	which is the result of  \cref{Exactintegralformula}. On the other
	side, the value $$\int_{n\in \mathbb{N}}\Vert \nabla f_{n}(x)\Vert d\mu
	(n)=2\sum_{n\in \mathbb{N}}|\langle x,e_{n}\rangle |$$ could not be finite
	for all $x\in \ell ^{2}$ (consider, for instance, $x=(1/n)_{n\geq 1}),$
	which means that $(2^{n+1}\langle x,e_{n}\rangle e_{n})_{n}\not\in L^{1}(%
	\mathbb{N},\ell ^{2}).$
\end{example}

The following is an easy consequence of  \cref{corollaryAsplund,Exactintegralformula}:

\begin{corollary}
	\label{CorGateauxdiff}Assume that either $X$ is Asplund, $X\ $and\ $X^{\ast
	} $ are Suslin, or $(T,\Sigma )=(\mathbb{N},\mathcal{P}(\mathbb{N})).$ If $%
	x\in X$ is a common continuity point of both $I_{f}$ and the $f_{t}$'s, then 
	$I_{f}$ is G\^{a}teaux-differentiable at $x$ if and only if $f_{t}$ is G\^{a}%
	teaux-differentiable at $x$ for ae $t\in T.$
\end{corollary}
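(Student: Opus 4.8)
The plan is to reduce the equivalence to the classical fact that a convex function which is continuous at a point is G\^{a}teaux-differentiable there exactly when its subdifferential at that point is a singleton, and then to feed this into the $\varepsilon=0$ instances of \cref{corollaryAsplund,Exactintegralformula}. Since $x$ is a continuity point of $I_{f}$, it lies in $\inte(\dom I_{f})$, so $N_{\dom I_{f}}(x)=\{0\}$ and $\partial I_{f}(x)$ is nonempty, convex and $w^{\ast}$-compact (hence $w^{\ast}$-closed and equicontinuous); thus the cited theorems give $\partial I_{f}(x)=\cl\nolimits^{w^{\ast}}\big((w)\text{-}\int_{T}\partial f_{t}(x)\,d\mu(t)\big)$, and without the closure in the setting of \cref{Exactintegralformula}. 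Equicontinuity makes $\sigma_{\partial I_{f}(x)}=I_{f}'(x;\cdot)$ finite and sublinear, so $I_{f}$ is G\^{a}teaux-differentiable at $x$ iff $\sigma_{\partial I_{f}(x)}$ is linear, i.e. $\sigma_{\partial I_{f}(x)}(u)+\sigma_{\partial I_{f}(x)}(-u)=0$ for every $u\in X$. In the same way, $\partial f_{t}(x)$ is nonempty, convex and $w^{\ast}$-compact for ae $t$, and $f_{t}$ is G\^{a}teaux-differentiable at $x$ iff the nonnegative quantity $g_{t}(u):=\sigma_{\partial f_{t}(x)}(u)+\sigma_{\partial f_{t}(x)}(-u)=f_{t}'(x;u)+f_{t}'(x;-u)$ vanishes for all $u$; here it is crucial to observe that $u\mapsto g_{t}(u)$ is sublinear and even, so $\{u:g_{t}(u)=0\}$ is a \emph{closed linear subspace} of $X$, the device that will let me upgrade ``for each $u$'' to ``for all $u$'' later.

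The second ingredient is the interchange $\sigma_{\partial I_{f}(x)}(u)=I_{f}'(x;u)=\int_{T}f_{t}'(x;u)\,d\mu(t)$ for all $u\in X$. By positive homogeneity of directional derivatives it suffices to treat $u$ in a neighbourhood of $0$ on which $I_{f}$ is bounded; there the difference quotients $\lambda\mapsto\lambda^{-1}(f_{t}(x+\lambda u)-f_{t}(x))$ decrease to $f_{t}'(x;u)$ as $\lambda\downarrow 0$ and are squeezed between the $\mu$-integrable functions $t\mapsto f_{t}(x)-f_{t}(x-u)$ and $t\mapsto f_{t}(x+u)-f_{t}(x)$ (their integrability being precisely the continuity of $I_{f}$ at $x$), so monotone convergence applies after integrating in $t$; measurability of $t\mapsto f_{t}'(x;u)$ follows from that of $f$ as in the proof of \cref{corollaryAsplund}. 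Combined with the first paragraph, this shows that $I_{f}$ is G\^{a}teaux-differentiable at $x$ if and only if $\int_{T}g_{t}(u)\,d\mu(t)=0$ for every $u\in X$.

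From here the proof splits. If $f_{t}$ is G\^{a}teaux-differentiable at $x$ for ae $t$, then $g_{t}\equiv 0$ ae, whence $\int_{T}g_{t}(u)\,d\mu(t)=0$ for all $u$ and $I_{f}$ is G\^{a}teaux-differentiable at $x$. Conversely, if $I_{f}$ is G\^{a}teaux-differentiable at $x$, then for each fixed $u$ the nonnegative integrable function $t\mapsto g_{t}(u)$ has zero integral, hence $g_{t}(u)=0$ for ae $t$. The step I expect to be the main obstacle is the quantifier swap: producing a single $\mu$-null set off which $g_{t}(u)=0$ for \emph{all} $u$ simultaneously. When $(T,\Sigma)=(\mathbb N,\mathcal P(\mathbb N))$ there is nothing to swap, since ``ae'' means ``for every $n$''. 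When $X$ is Suslin it is separable, so one picks a countable set $D$ with dense linear span, intersects the countably many null sets $\{t:g_{t}(u)\neq 0\}$, $u\in D$, and uses that for ae $t$ the closed subspace $\{u:g_{t}(u)=0\}$ contains $D$, hence equals $X$; that is, $\partial f_{t}(x)$ is a singleton and $f_{t}$ is G\^{a}teaux-differentiable at $x$. In the Asplund case one first reduces, for each finite-dimensional $L\ni x$, to the pair $(f_{|_{L}},I_{f}|_{L}=I_{f_{|_{L}}})$, which satisfies the hypotheses and for which the finite-dimensional case already gives $g_{t}|_{L}\equiv 0$ for ae $t$; assembling these $L$-dependent null sets into one $\mu$-null set off which $g_{t}\equiv 0$ is the delicate part, and I would handle it through the finite-dimensional-reduction scheme and the measurable-selection tools (together with the completeness of $\mu$) already used in the proof of \cref{corollaryAsplund}. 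Once the swap is secured, the equivalence holds in all three settings.
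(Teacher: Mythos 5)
Your forward implication, and your treatment of the converse in the Suslin and discrete cases, are correct; there your route (interchanging directional derivative and integral by monotone convergence, noting that $\{u:g_{t}(u)=0\}$ is a closed subspace, and intersecting the countably many null sets attached to a countable dense set of directions) is a legitimate, essentially self-contained substitute for the paper's one-line appeal to the $\varepsilon=0$ cases of \cref{corollaryAsplund,Exactintegralformula}, which is all the paper offers by way of proof.

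The genuine gap is the Asplund branch of the converse, which you leave as a declaration of intent (``the delicate part \dots\ I would handle it through the finite-dimensional-reduction scheme and the measurable-selection tools''). This is not a routine detail: an Asplund space need not be separable, so there is no countable determining set of directions, and the null sets produced by the finite-dimensional reductions are indexed by the uncountable family $\mathcal{F}(x)$, so completeness of $\mu$ cannot merge them. Worse, the quantifier swap you need fails in general under exactly the data your tools see. Take $T=[0,1]$ with Lebesgue measure, $X=\ell^{2}([0,1])$ (Hilbert, hence Asplund and reflexive), and $f(t,y):=|\langle e_{t},y\rangle|$, where $e_{t}$ is the canonical orthonormal system; every restriction $f_{|F}$ to a finite-dimensional subspace is a convex normal integrand, every $f_{t}$ is convex and continuous, and $I_{f}\equiv 0$ because each $y\in X$ has countable support. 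Then $I_{f}$ is (Fr\'echet, a fortiori G\^ateaux) differentiable at $0$ and for each fixed $u$ one indeed has $g_{t}(u)=2|\langle e_{t},u\rangle|=0$ for ae $t$, yet $g_{t}(e_{t})=2$ for \emph{every} $t$, so no $f_{t}$ is G\^ateaux-differentiable at $0$. Hence ``for every $u$, ae $t$'' genuinely does not upgrade to ``ae $t$, for every $u$'' in the nonseparable Asplund setting, and no combination of finite-dimensional reduction, measurable selection and completeness of $\mu$ can close your argument without an extra separability-type ingredient (which is precisely what the Suslin and discrete hypotheses provide). You would need either to restrict that case to separable $X$, or to exploit some additional global measurability hypothesis not used in your sketch; as written, the Asplund case of the ``only if'' direction is unproven. (The same obstruction affects the paper's implicit argument via ``$\partial I_{f}(x)$ a singleton iff G\^ateaux at a continuity point'' combined with the formula of \cref{corollaryAsplund}, so the difficulty you flagged is real and is not resolved by the source either.)
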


The last result was given \cite[Corollary 2.11]{MR3571567} when $(T,\Sigma
)=(\mathbb{N},\mathcal{P}(\mathbb{N})).$ Concerning the Fr\'echet-differentiability, in the same referred result the authors proved one
implication (the Fr\'echet-differentiability of the sum implies the one of the
data functions), and  left the other implication as an open problem (see \cite[%
Question 2.12, page 1146]{MR3571567}). The following example answers this
question in the negative.

\begin{example}
	Consider $(T,\Sigma )=(\mathbb{N},\mathcal{P}(%
	\mathbb{N}))$ and $X=\ell ^{1},$ and let\ $(e_{n})_{n}$\ be the canonical
	basis of $\ell ^{1},$ and $\mu $ be the finite measure given by\ $\mu
	(\{n\})=1.$ Define the integrand $f:\mathbb{N}\times \ell ^{1}\rightarrow 
	\mathbb{R}$ as $f(n,x):=|\langle e_{n},x\rangle |^{1+1/n},$ so that $%
	I_{f}(x)=\sum |\langle e_{n},x\rangle |^{1+1/n}<+\infty .$ Since each $f_{n}$
	is a Fr\'echet-differentiable convex function such that\ $\nabla f_{n}(x)=(1+%
	\frac{1}{n})|\langle x,e_{n}\rangle |^{1/n}e_{n}$, according to 
	Corollary \ref{CorGateauxdiff}, $I_{f}$ is G\^{a}teaux-differentiable on $\ell ^{1},$ with\
	G\^{a}teaux-differential equal to $\sum \nabla f_{n}(x):=\int_{\mathbb{N}}\nabla
	f_{n}(x)d\mu (n)=\sum (1+\frac{1}{n})|\langle x,e_{n}\rangle |^{1/n}e_{n}$\
	(by  Corrollary \ref{CorGateauxdiff}). Thus, if\ $I_{f}$ were be\ Fr\'e%
	chet-differentiable at $x=0$, then\ we would have 
	\begin{equation*}
		\frac{I_{f}(n^{-1}e_{n})-I_{f}(0)-n^{-1}\langle \nabla I_{f}(0),e_{n}\rangle 
		}{n^{-1}}=nn^{-1-\frac{1}{n}}=n^{-\frac{1}{n}}\rightarrow 1,
	\end{equation*}%
	which is a contradiction.
\end{example}

In the following, we extend \cref{corollaryAsplund,Exactintegralformula}  to the nonconvex Lipschitz case. The resulting
formulae are known for both the case of a separable Banach space or $%
(T,\Sigma )=(\mathbb{N},\mathcal{P}(\mathbb{N}))$ (\cite[Theorem 2.7.2]{MR1058436}), and the case of Asplund spaces (\cite{mordukhovich2015subdifferentials}). Recall that for a Lipschtiz continuous
function $\varphi :X\rightarrow \mathbb{R}$, when $X$ is a normed space, the
generalized directional derivative of $\varphi $ at $x\in X$ in the
direction $u\in X$ is given by 
\begin{equation*}
	\varphi ^{\circ }(x;u):=\limsup\limits_{y\rightarrow x,\text{ }s\rightarrow
		0^{+}}s^{-1}(\varphi (y+su)-\varphi (y)).
\end{equation*}%
The generalized subdifferential of $\varphi $ at $x\in X$ is the set $%
\partial _{C}\varphi (x)$ defined as 
\begin{equation*}
	\partial _{C}\varphi (x):=\{x^{\ast }\in X^{\ast }:\varphi ^{\circ
	}(x;u)\geq \left\langle x^{\ast },u\right\rangle \text{ for all }u\in X\}.
\end{equation*}

\begin{proposition}[Clarke-Mordukhovich-Sagara]
	Assume that either\ $X$ is Asplund, $X$ is a separable Banach space, or $%
	(T,\Sigma )=(\mathbb{N},\mathcal{P}(\mathbb{N})).$ Let\ integrand $f:T\times
	X\rightarrow \overline{\mathbb{R}}$ and $x\in X$ be such that:
	
	\begin{enumerate}[label={(\alph*)},ref={(\alph*)}]
		
		\item There exists $K\in L^{1}(T,\mathbb{R}_{+})$ and $\delta >0$ such that
		for every $y,z\in B(x,\delta )$, $t\rightarrow f(t,y)$ is measurable and $%
		|f_{t}(y)-f_{t}(z)|\leq K(t)\Vert y-z\Vert $ ae $t\in T.$
		
		\item (When $X$ is Asplund) For every $u\in X$, the function $t\rightarrow
		f_{t}^{\circ }(x;u)$ is measurable.\newline
		Then we have that 
		\begin{eqnarray*}
			\partial _{C}I_{f}(x) &\subseteq & \cl \nolimits^{w^{\ast }}\left(
			(w)\text{-}\int\limits_{T}\partial _{C}f_{t}(x)d\mu (t)\right) \\
			&=&(w)\text{-}\int\limits_{T}\partial _{C}f_{t}(x)d\mu (t)\text{ (if }X%
			\text{ is separable or if }(T,\Sigma )=(\mathbb{N},\mathcal{P}(\mathbb{N}))%
			\text{)}.
		\end{eqnarray*}
	\end{enumerate}
\end{proposition}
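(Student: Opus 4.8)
The plan is to reduce the statement to the convex results \cref{corollaryAsplund,Exactintegralformula}, applied to the sublinear ``convexified'' integrand
\begin{equation*}
	g:T\times X\to\mathbb{R},\qquad g(t,u):=f_t^{\circ}(x;u).
\end{equation*}
By (a), $I_f$ is real-valued and $\|K\|_{L^1}$-Lipschitz on $B(x,\delta)$, so $\partial_C I_f(x)$ and $I_f^{\circ}(x;\cdot)$ make sense, the latter being a finite sublinear function. The first step is to record the properties of $g$. For $\delta$ small each difference quotient $s^{-1}(f_t(y+su)-f_t(y))$ with $\|y-x\|$ and $s$ small is dominated by $K(t)\|u\|$; hence $g_t$ is finite, sublinear and $\|\cdot\|$-Lipschitz with constant $K(t)$ --- in particular convex and continuous on all of $X$ --- with $g_t(0)=0$ and $\partial g_t(0)=\partial_C f_t(x)$ (the last identity being exactly the definition of $\partial_C f_t(x)$). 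The map $t\mapsto g_t(u)$ is measurable: in the Asplund case this is hypothesis (b); in the other two cases it is automatic, since continuity of $f_t$ in its first argument lets one compute the $\limsup$ defining $f_t^{\circ}(x;u)$ along a countable set, writing it as a countable infimum of countable suprema of the measurable functions $t\mapsto s^{-1}(f_t(y+su)-f_t(y))$. Thus $g$ is a convex normal integrand, and so is each restriction $g_{|_F}$; moreover $I_g(u)=\int_{T}f_t^{\circ}(x;u)\,d\mu(t)\le\|K\|_{L^1}\|u\|<+\infty$, so $I_g$ is real-valued, $\dom I_g=X$, and, being convex and bounded above on a norm ball, continuous at $0$; in particular $N_{\dom I_g}(0)=\{0\}$.

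The second step is the comparison of Clarke directional derivatives
\begin{equation*}
	I_f^{\circ}(x;u)\le\int_{T}f_t^{\circ}(x;u)\,d\mu(t)=I_g(u)\qquad(u\in X),
\end{equation*}
which is the key analytic point and is the heart of \cite[Theorem~2.7.2]{MR1058436}: it follows from the reverse Fatou lemma applied along a sequence realizing the $\limsup$ in $I_f^{\circ}(x;u)$, which is legitimate because the difference quotients of $I_f$ are dominated by $K(\cdot)\|u\|\in L^1(T,\mathbb{R})$. Since both $I_f^{\circ}(x;\cdot)$ and $I_g$ are finite sublinear functions, and $\partial p(0)=\{x^{\ast}:\langle x^{\ast},\cdot\rangle\le p\}$ is monotone in such $p$, the last inequality yields
\begin{equation*}
	\partial_C I_f(x)=\partial\big(I_f^{\circ}(x;\cdot)\big)(0)\subseteq\partial I_g(0).
\end{equation*}

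The last step is to compute $\partial I_g(0)$ by convex calculus. In the Asplund case, \cref{corollaryAsplund} applied to $I_g$ at $0$ (its hypotheses were checked above) gives
\begin{equation*}
	\partial I_g(0)=\cl\nolimits^{w^{\ast}}\Big((w)\text{-}\int_{T}\partial g_t(0)\,d\mu(t)\Big)+N_{\dom I_g}(0)=\cl\nolimits^{w^{\ast}}\Big((w)\text{-}\int_{T}\partial_C f_t(x)\,d\mu(t)\Big),
\end{equation*}
which is the asserted inclusion. When $X$ is a separable Banach space (hence Suslin) or $(T,\Sigma)=(\mathbb{N},\mathcal{P}(\mathbb{N}))$, \cref{Exactintegralformula} applied to $I_g$ at $0$ with $\varepsilon=0$ (forcing $\varepsilon_1=\varepsilon_2=0$ and $\ell\equiv0$) gives directly $\partial I_g(0)=(w)\text{-}\int_{T}\partial_C f_t(x)\,d\mu(t)$; so in these cases it remains to see that this Aumann integral already equals its weak$^{\ast}$-closure. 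This is a closedness statement for the integrably bounded, weak$^{\ast}$-compact-convex-valued multifunction $t\rightrightarrows\partial_C f_t(x)$ (note $\|x^{\ast}\|\le K(t)$ whenever $x^{\ast}\in\partial_C f_t(x)$), and follows from the classical closure theorems for Aumann integrals of integrably bounded multifunctions, or, in the discrete case, directly from
\begin{equation*}
	(w)\text{-}\int_{\mathbb{N}}\partial_C f_n(x)\,d\mu(n)=\bigcap_{N\ge1}\Big(\sum_{n\le N}\mu(\{n\})\,\partial_C f_n(x)+r_N\,\overline{B}_{X^{\ast}}\Big),\qquad r_N:=\sum_{n>N}\mu(\{n\})K(n)\to0,
\end{equation*}
an intersection of weak$^{\ast}$-compact sets.

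I expect the routine part to be the verification of the properties of $g$; the two delicate points are the automatic measurability of $t\mapsto f_t^{\circ}(x;u)$ in the non-Asplund settings and the weak$^{\ast}$-closedness of the Aumann integral in the separable and discrete cases, the latter being the only genuinely non-formal step.
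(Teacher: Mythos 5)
Your proposal is correct and takes essentially the same route as the paper: bound $I_f^{\circ}(x;\cdot)$ by $I_{f^{\circ}(x;\cdot)}$ via Fatou's lemma and then apply \cref{corollaryAsplund,Exactintegralformula} to the everywhere-continuous sublinear convex normal integrand $g_t:=f_t^{\circ}(x;\cdot)$, whose subdifferential at $0$ is $\partial_C f_t(x)$. Your separate closedness argument for the Aumann integral in the separable/discrete cases is sound but redundant, since \cref{Exactintegralformula} already identifies that integral with $\partial I_g(0)$, which is automatically weak$^{\ast}$-closed.
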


\begin{proof}
	By taking into account Fatou's lemma, we have that $$I_{f}^{\circ }(x;u)\leq
	\int_{T}f_{t}^{\circ }(x;u)d\mu (t)$$ for all $u\in X,$ and so $\partial
	_{C}I_{f}(x)=\partial I_{f}^{\circ }(x;0)\subset \partial I_{f^{\circ
		}(x;\cdot )}(0).$\ Since\ $(t,u)\rightarrow f_{t}^{\circ }(x;u)$ is a Carath\'e%
	odory map, for every finite-dimensional subspace $F\subset X,$ the mapping $%
	(f_{\cdot }^{\circ }(x;\cdot ))_{|_{F}}:T\times F\rightarrow \mathbb{R\cup
		\{+\infty \}}$ is a convex normal integrand. Hence, because\ $I_{f^{\circ
		}(x;\cdot )}$ and $f_{t}^{\circ }(x;\cdot )$ are continuous everywhere, the
	two desired formulae follow by applying  \cref{corollaryAsplund,Exactintegralformula}, respectively.
\end{proof}

\section{Conjugate functions}\label{CONJ}

We investigate in this section the representation of the $\epsilon $-normal set to $%
\dom I_{f}$ in terms of the data functions $f_{t}$. 

We suppose that $%
f:T\times X\rightarrow \overline{\mathbb{R}}$ is a normal integrand defined
on a locally convex space$\,X$ such that for some\ $x_{0}^{\ast }\in 
\mathnormal{L}_{w^{\ast }}^{1}(T,X^{\ast })$ and $\alpha \in \mathnormal{L}%
^{1}(T,\mathbb{R})$ it holds\ 
\begin{equation}
	f(t,x)\geq \langle x_{0}^{\ast }(t),x\rangle +\alpha (t)\text{ for all }x\in
	X\text{ and }t\in T.  \label{growth}
\end{equation}
In what follows, we suppose that either $X,X^{\ast }$ are Suslin or $%
(T,\Sigma )=(\mathbb{N},\mathcal{P}(\mathbb{N})).$ We recall that the
continuous infimal convolution of the $f_{t}^{\ast }$'s is the function $$%
\oint\limits_{T} f^{\ast }(t,\cdot )d\mu (t):X^{\ast }\rightarrow 
\overline{\mathbb{R}}$$ given by (see \cite{MR1321585} ) 
\begin{eqnarray*}
	\left( \oint\limits_{T} f^{\ast }(t,\cdot )d\mu (t)\right) (x^{\ast })
	&:&=\oint\limits_{T} f^{\ast }(t,x^{\ast })d\mu (t) \\
	&:&=\inf \{\int\limits_{T}f^{\ast }(t,x^{\ast }(t))d\mu (t)\mid \begin{array}{c}
		x^{\ast
		}(\cdot )\in \mathnormal{L}_{w^{\ast }}^{1}(T,X^{\ast }) \\
		\text{ and }%
		\int\limits_{T}x^{\ast }(t)d\mu (t)=x^{\ast }
	\end{array} \},
\end{eqnarray*}%
with the convention that $\inf_{\emptyset }:=+\infty .$ We also recall the
notation $ \cl^{\tau}h$ and $ \cco^{\tau} h$ ($\cl h$ and $ \cco h$ when there is no confusion), which refers to the closure and the closed convex hull with respect to the topology $\tau$ on $Y$ of a function $h:Y \rightarrow \mathbb{R}\cup
\{+\infty \}.$ We shall need the following lemma, which can be found in \cite%
[Lemma 1.1]{MR1330645}.

\begin{lemma}
	\label{LEMMA:HIRIART}Let $h:X^{\ast }\rightarrow \overline{\mathbb{R}}$ be a
	convex function.\ Then for all $r\in \mathbb{R}$ 
	\begin{equation*}
		\{x^{\ast }\in X^{\ast }: \cl\nolimits^{w^{\ast }} h (x^{\ast })\leq
		r\}=\bigcap\limits_{\delta >0} \cl\{x^{\ast }\in X^{\ast }:h(x^{\ast
		})<r+\delta \}.
	\end{equation*}%
	Moreover, if $r>\inf_{X^{\ast }}h$, then 
	\begin{equation*}
		\{x^{\ast }\in X^{\ast }: \cl\nolimits^{w^{\ast }} h (x^{\ast })\leq
		r\}= \cl\{x^{\ast }\in X^{\ast }:h(x^{\ast })<r\}.
	\end{equation*}
\end{lemma}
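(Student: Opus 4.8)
The plan is to reduce both identities to the elementary description of the $w^{\ast}$-lower-semicontinuous hull via epigraphs; throughout, "$\cl$" of a subset of $X^{\ast}$ is read as its $w^{\ast}$-closure (equivalently, for convex subsets, its closure in any topology compatible with the duality $\langle X^{\ast},X\rangle$). Recall that $\cl\nolimits^{w^{\ast}}h$ is the unique $\Rex$-valued function whose epigraph is $\cl\nolimits^{w^{\ast}}(\epi h)$, the closure being computed in $X^{\ast}\times\mathbb{R}$ with the product of the $w^{\ast}$-topology and the usual topology of $\mathbb{R}$. For the first identity --- for which convexity of $h$ is not even needed --- I would observe that, for $x^{\ast}\in X^{\ast}$ and $r\in\mathbb{R}$, one has $\cl\nolimits^{w^{\ast}}h(x^{\ast})\le r$ iff $(x^{\ast},r)\in\cl\nolimits^{w^{\ast}}(\epi h)$, i.e. iff every basic neighborhood $U\times(r-\delta,r+\delta)$ of $(x^{\ast},r)$ meets $\epi h$. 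A short case distinction on the value $h(y^{\ast})$, where $(y^{\ast},s)\in\epi h$ with $y^{\ast}\in U$ and $|s-r|<\delta$, together with the conventions on $\pm\infty$ (one may take $s=\max\{h(y^{\ast}),r\}$ in the nontrivial direction), shows this is equivalent to: for every $\delta>0$ and every $w^{\ast}$-neighborhood $U$ of $x^{\ast}$ there is $y^{\ast}\in U$ with $h(y^{\ast})<r+\delta$; that is precisely $x^{\ast}\in\bigcap_{\delta>0}\cl\{h<r+\delta\}$.

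For the second identity, set $g:=\cl\nolimits^{w^{\ast}}h$, which is convex (its epigraph $\cl\nolimits^{w^{\ast}}(\epi h)$ is convex) and satisfies $g\le h$. The inclusion $\cl\{h<r\}\subseteq\{g\le r\}$ is clear since $\{h<r\}\subseteq\{g\le r\}$ and the latter, being a sublevel set of a $w^{\ast}$-lsc function, is $w^{\ast}$-closed. For the reverse inclusion, fix $x^{\ast}$ with $g(x^{\ast})\le r$. If $g(x^{\ast})<r$, pick a real $\rho$ with $g(x^{\ast})\le\rho<r$; the first part at level $\rho$ gives $x^{\ast}\in\bigcap_{\delta>0}\cl\{h<\rho+\delta\}$, and choosing $\delta\in(0,r-\rho)$ yields $x^{\ast}\in\cl\{h<r\}$. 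If $g(x^{\ast})=r$, use $r>\inf_{X^{\ast}}h$ to pick $\bar{x}^{\ast}$ with $h(\bar{x}^{\ast})<r$; then by convexity of $g$, for each $\lambda\in(0,1)$ the point $x^{\ast}_{\lambda}:=(1-\lambda)x^{\ast}+\lambda\bar{x}^{\ast}$ satisfies $g(x^{\ast}_{\lambda})\le(1-\lambda)r+\lambda h(\bar{x}^{\ast})<r$, so by the previous case $x^{\ast}_{\lambda}\in\cl\{h<r\}$; letting $\lambda\to0^{+}$ (so that $x^{\ast}_{\lambda}\to x^{\ast}$ along the segment) and using that $\cl\{h<r\}$ is closed, we obtain $x^{\ast}\in\cl\{h<r\}$.

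The main obstacles are bookkeeping rather than conceptual: carrying the $\pm\infty$ conventions correctly through the first step --- in particular the degenerate cases (e.g. $g(x^{\ast})=-\infty$, or $h$ identically $-\infty$ near $x^{\ast}$), which are handled directly from the first part applied at a finite level strictly below $r$ rather than via the segment argument --- and keeping track that all set closures are taken for $w^{\ast}$ (where convexity guarantees agreement with the Mackey closure). Beyond that, the argument is a direct unwinding of the definition of the lower-semicontinuous hull together with the one-variable convexity trick along a segment through a point where $h<r$.
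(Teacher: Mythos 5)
Your proof is correct. Note, however, that the paper does not prove this lemma at all: it is quoted verbatim from the reference cited right before it (Lemma 1.1 of \cite{MR1330645}), so there is no internal argument to compare against. Your self-contained derivation is the natural one and matches the standard proof of that cited result: the first identity is exactly the epigraph description of the $w^{\ast}$-lsc hull, $\cl^{w^{\ast}}h(x^{\ast})\leq r \Leftrightarrow (x^{\ast},r)\in\cl^{w^{\ast}}(\epi h)$, unwound through basic product neighborhoods (and you are right that convexity plays no role there, and that the choice $s=\max\{h(y^{\ast}),r\}$ disposes of the $-\infty$ cases); the second identity then follows from the first by the segment argument through a point $\bar{x}^{\ast}$ with $h(\bar{x}^{\ast})<r$, which is where both convexity of $\cl^{w^{\ast}}h$ and the hypothesis $r>\inf_{X^{\ast}}h$ enter, together with $w^{\ast}$-closedness of the sublevel set $\{\cl^{w^{\ast}}h\leq r\}$ for the easy inclusion. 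Two small points worth keeping explicit if this were written out: the convexity inequality for $\cl^{w^{\ast}}h$ must be justified when $\cl^{w^{\ast}}h(\bar{x}^{\ast})=-\infty$ (immediate from convexity of the closed epigraph, as your epigraph formulation implicitly provides), and the unadorned $\cl$ on the right-hand sides should be read as the $w^{\ast}$-closure (equivalently any closure compatible with the pairing, since the sublevel sets of the convex $h$ are convex), which is indeed how the lemma is applied in Theorem \ref{teorema3.9}.
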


\begin{theorem}
	\label{teorema3.9}If\ $\overline{ \co }I_{f}=I_{\overline{ \co %
		}f},$ then 
	\begin{equation*}
		(I_{f})^{\ast }(x^{\ast })= \cl\nolimits^{w^{\ast }}\left( \oint\limits_{T} f^{\ast }(t,\cdot )d\mu (t)\right) (x^{\ast }),\text{ for all 
		}x^{\ast }\in X^{\ast },
	\end{equation*}%
	and, for all $x\in X$ and $\epsilon \geq 0$, 
	\begin{equation*}
		\partial _{\epsilon }I_{f}(x)=\bigcap\limits_{\epsilon _{1}>\epsilon }%
		\cl\nolimits^{w^{\ast }}\left( \bigcup\limits_{\ell \in \mathcal{I}%
			(\epsilon _{1})}\left\{ \int\limits_{T}x^{\ast }d\mu :x^{\ast }\in
		L_{w^{\ast }}^{1}(T,X^{\ast }),\text{ }x^{\ast }(t)\in \partial _{\ell
			(t)}f_{t}(x)\text{ ae}\right\} \right) .
	\end{equation*}%
	In addition, if $\epsilon >I_{f}(x)-\overline{ \co }I_{f}(x)$, then 
	\begin{equation*}
		\partial _{\epsilon }I_{f}(x)= \cl\nolimits^{w^{\ast }}\left(
		\bigcup\limits_{\ell \in \mathcal{I}(\epsilon )}\left\{
		\int\limits_{T}x^{\ast }d\mu :x^{\ast }\in L_{w^{\ast }}^{1}(T,X^{\ast }),%
		\text{ }x^{\ast }(t)\in \partial _{\ell (t)}f_{t}(x)\text{ ae}\right\}
		\right) .
	\end{equation*}
\end{theorem}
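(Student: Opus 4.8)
The plan is to deduce all three formulae from the conjugate identity, which is itself obtained by combining the hypothesis $\overline{\co}\,I_f = I_{\overline{\co}f}$ with a known duality result relating $(I_g)^*$ to the continuous infimal convolution of the $g_t^*$'s. First I would recall (from \cite{MR1321585} or the companion paper \cite{INTECONV}, using the growth condition \eqref{growth} which guarantees $I_f$ is proper and that the $f_t^*$ are minorized so that the continuous infimal convolution is well-defined and proper) that for a convex normal integrand $g$ one has
\begin{equation*}
	(I_g)^{**} = \left(\oint\limits_T g^*(t,\cdot)\,d\mu(t)\right)^{*},
	\qquad\text{equivalently}\qquad
	(I_g)^{*} = \cl\nolimits^{w^{\ast}}\!\left(\oint\limits_T g^*(t,\cdot)\,d\mu(t)\right).
\end{equation*}
Applying this with $g = \overline{\co}f$, and noting that $(\overline{\co}f)_t = \overline{\co}\,f_t$ has the same conjugate as $f_t$ (so $\oint$ of the $f_t^*$'s is unchanged) while $(I_{\overline{\co}f})^* = (\overline{\co}\,I_f)^* = (I_f)^*$ by hypothesis and by the general identity $h^* = (\overline{\co}h)^*$, yields the first displayed formula. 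Here the Suslin / $(\mathbb N,\mathcal P(\mathbb N))$ assumption is what makes the measurable-selection machinery behind the representation of $(I_g)^{**}$ available.

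Next I would pass from the conjugate to the $\epsilon$-subdifferential via the standard characterization $x^* \in \partial_\epsilon I_f(x) \iff (I_f)^*(x^*) + I_f(x) \le \langle x^*,x\rangle + \epsilon$, i.e. $x^*$ lies in the sublevel set $\{\,x^* : (I_f)^*(x^*) \le \langle x^*, x\rangle - I_f(x) + \epsilon\,\}$; after the translation $x^* \mapsto x^* $ and absorbing the affine term, this is a sublevel set of the convex function $(I_f)^* = \cl^{w^*}(\oint_T f^*(t,\cdot)d\mu)$. Now I invoke \cref{LEMMA:HIRIART}: the $w^*$-closure of a convex function $h$ has sublevel set $\{h^{w^*}\le r\} = \bigcap_{\delta>0}\cl\{h < r+\delta\}$, and $= \cl\{h<r\}$ as soon as $r > \inf h$. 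With $h = \oint_T f^*(t,\cdot)\,d\mu(t)$ (suitably translated), the strict sublevel set $\{h < r+\delta\}$ unwinds, by the very definition of the continuous infimal convolution as an infimum over $x^*(\cdot)\in L^1_{w^*}(T,X^*)$ with $\int x^*\,d\mu = x^*$, into exactly
\begin{equation*}
	\bigcup_{\ell\in\mathcal I(\epsilon_1)}\left\{\int_T x^*\,d\mu : x^*(\cdot)\in L^1_{w^*}(T,X^*),\ x^*(t)\in\partial_{\ell(t)}f_t(x)\ \text{ae}\right\},
\end{equation*}
using the pointwise Young–Fenchel equality $x^*(t)\in\partial_{\ell(t)}f_t(x) \iff f^*(t,x^*(t)) \le \langle x^*(t),x\rangle - f_t(x) + \ell(t)$ and matching the total budget $\int_T\ell\,d\mu \le \epsilon_1$ with the slack in the infimal-convolution inequality. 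Taking $\epsilon_1 = \epsilon + (\text{something} \to 0)$ across the intersection $\bigcap_{\delta>0}$ gives the second formula $\partial_\epsilon I_f(x) = \bigcap_{\epsilon_1>\epsilon}\cl^{w^*}(\cdots)$; and when $\epsilon > I_f(x) - \overline{\co}\,I_f(x)$, the relevant level $r$ satisfies $r > \inf h$ (because $\inf h = \inf (\oint f^*) $ corresponds, via biconjugation, to $-\overline{\co}\,I_f(x)$-type quantities — precisely $\epsilon > I_f(x) - (I_f)^{**}(x)$ forces strict feasibility), so the second part of \cref{LEMMA:HIRIART} applies and collapses the intersection to the single closure in the third formula.

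I expect the main obstacle to be the bookkeeping in the second paragraph: verifying carefully that the strict sublevel sets of the continuous infimal convolution coincide, set-theoretically, with the indicated unions of Gelfand/weak integrals of $\epsilon$-subdifferential selections — this requires the measurable-selection argument (in the Suslin case via \cite[Theorem III.22 / III.36]{MR0467310}, in the discrete case trivially) to turn a pointwise-ae inequality $f^*(t,x^*(t)) \le \langle x^*(t),x\rangle - f_t(x) + \ell(t)$ with $\ell\in L^1$ into membership $x^*(\cdot)\in L^1_{w^*}(T,X^*)$ with $x^*(t)\in\partial_{\ell(t)}f_t(x)$ ae, and conversely — together with checking that the function $t\mapsto f^*(t,x^*(t))$ is measurable and that its negative part is integrable (this is where \eqref{growth} is used again). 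A secondary, more routine point is the precise identification of $\inf_{X^*}\left(\oint_T f^*(t,\cdot)\,d\mu\right)$ translated appropriately, with $I_f(x) - \overline{\co}\,I_f(x)$, which is a biconjugation computation: $\inf h$ evaluated on the translated function equals $I_f(x) - (I_f)^{**}(x) = I_f(x) - \overline{\co}\,I_f(x)$ using the first formula and $(I_f)^{**} = \overline{\co}\,I_f$.
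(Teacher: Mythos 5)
Your proposal is correct and follows essentially the same route as the paper's proof: the conjugate identity is obtained from the interchange/duality theorem for the continuous infimal convolution (the paper's appeal to \cite[Proposition 3.3]{INTECONV} and Moreau's theorem, combined with the hypothesis $\overline{\co}I_{f}=I_{\overline{\co}f}$ and biconjugation), and the two subdifferential formulae follow by applying \cref{LEMMA:HIRIART} to the sublevel sets of $\cl^{w^{\ast}}\bigl(\oint_{T}f^{\ast}(t,\cdot)d\mu\bigr)$ and unwinding strict sublevel sets via the pointwise Fenchel--Young equality into selections $x^{\ast}(t)\in\partial_{\ell(t)}f_{t}(x)$ with $\ell\in\mathcal{I}(\epsilon_{1})$, exactly as in the paper.
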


\begin{proof}
	It easy to see  that $f^{\ast }$ is a convex integrand function\ such
	that $f^{\ast }(t,x_{0}^{\ast }(t))\leq -\alpha (t)$ ae, by \cref{growth}.
	In addition, denoting $\varphi :=\oint\limits_{T} f^{\ast }(t,\cdot )d\mu
	(t),$ we verify that, for all $x\in X,$\ 
	\begin{eqnarray*}
		\varphi ^{\ast }(x) &=&\sup\limits_{\lambda ^{\ast }\in X^{\ast
		}}\sup\limits_{x^{\ast }\in \mathnormal{L}_{w^{\ast }}^{1}(T,X^{\ast }),%
			\text{ }\int\limits_{T}x^{\ast }d\mu =\lambda ^{\ast }}\{\langle \lambda
		^{\ast },x\rangle -\int\limits_{T}f^{\ast }(t,x^{\ast }(t))d\mu (t)\} \\
		&=&\sup\limits_{x^{\ast }\in \mathnormal{L}_{w^{\ast }}^{1}(T,X^{\ast
			})}\int\limits_{T}(\langle x^{\ast }(t),x\rangle -f^{\ast }(t,x^{\ast
		}(t))d\mu (t),
	\end{eqnarray*}%
	and so, according to \cite[Proposition 3.3]{INTECONV} (see also \cite[Theorem VII-7]{MR0467310}) and Moreau's envelope
	Theorem, 
	\begin{equation*}
		\varphi ^{\ast }(x)=I_{f^{\ast \ast }}(x)=I_{\overline{ \co }f}(x)=%
		\overline{ \co }I_{f}.
	\end{equation*}%
	Consequently, the convexity of the continuous infimal convolution yields 
	\begin{equation}
		\cl^{w^{\ast }}\left( \varphi \right) =\varphi ^{\ast \ast
		}=(\overline{ \co }I_{f})^{\ast }=(I_{f})^{\ast }.
		\label{equationbiconjugate}
	\end{equation}
	
	Now, we\ assume that $\partial _{\epsilon }I_{f}(x)\neq \emptyset $. Then,
	for all\ $\epsilon _{1}>\epsilon $ one has that $$\inf_{x^{\ast }\in X^{\ast
	}}\{\varphi (x^{\ast })-\langle x^{\ast },x\rangle +I_{f}(x)\}=I_{f}(x)-(%
	\overline{ \co }I_{f})(x)\leq \epsilon <\epsilon _{1}$$ (by \cref%
	{equationbiconjugate}), and so, using  Lemma \ref{LEMMA:HIRIART}, 
	\begin{eqnarray*}
		\partial _{\epsilon _{1}}I_{f}(x) &=&\left\{ x^{\ast }\in X^{\ast }:{%
			\cl}^{w^{\ast }}\left( \varphi \right) (x^{\ast })+I_{f}(x)-\langle
		x^{\ast },x\rangle \leq \epsilon _{1}\right\} \\
		&=&\left\{ x^{\ast }\in X^{\ast }: \cl \nolimits^{w^{\ast }}\left(
		\varphi -x\right) (x^{\ast })+I_{f}(x)\leq \epsilon _{1}\right\} \\
		&=& \cl \nolimits^{w^{\ast }}\left\{ x^{\ast }\in X^{\ast }:\varphi
		(x^{\ast })+I_{f}(x)<\langle x^{\ast },x\rangle +\epsilon _{1}\right\} .
	\end{eqnarray*}%
	Observe that for each\ $x^{\ast }\in X^{\ast }$ satisfying\ $\varphi
	(x^{\ast })+I_{f}(x)<\langle x^{\ast },x\rangle +\epsilon _{1}$, there
	exists some $x^{\ast }(\cdot )\in \mathnormal{L}_{w^{\ast }}^{1}(T,X^{\ast
	}) $ such that $\int_{T}f(t,x)d\mu (t)+\int f^{\ast }(t,x^{\ast }(t))\leq
	\int \langle x^{\ast }(t),x\rangle d\mu (t)+\epsilon _{1}$. Hence, by
	defining $\ell (t):=f(t,x)+f^{\ast }(t,x^{\ast }(t))-\langle x^{\ast
	}(t),x\rangle ;$ hence, $\ell \in \mathcal{I}(\epsilon _{1})\ $and\ $x^{\ast
	}(t)\in \partial _{\ell (t)}f(t,x)$ ae. The proof is finished since the
	other inclusion is straightforward.
\end{proof}

\begin{remark}
It is worth observing that if the linear growth condition 
	\cref{growth} holds with $x_{0}^{\ast }\in \mathnormal{L}_{w^{\ast
	}}^{p}(T,X^{\ast })$ (or $\mathnormal{L}^{p}(T,X^{\ast }),$ resp.) for some $%
	p\in \lbrack 1,+\infty ]$, then changing the infimum in the definition of $\oint\limits_{T} f^*(t,\cdot) d\mu(t)$ over elements $	x^{\ast
	}(\cdot ) \in  \mathnormal{L}_{w^{\ast
	}}^{p}(T,X^{\ast })$  (or $\mathnormal{L}^{p}(T,X^{\ast }),$ resp.)  we obtain more precision on the
	subdifferenital set of $I_{f},$ namely, for every $x\in X$%
	\begin{equation*}
		\partial _{\epsilon }I_{f}(x)=\bigcap\limits_{\epsilon _{1}>\epsilon }%
		\cl \nolimits^{w^{\ast }}\left( \left\{ \int\limits_{T}x^{\ast }d\mu :
		\begin{array}{c}
			x^{\ast }\in
			L_{w^{\ast }}^{p}(T,X^{\ast })\text{ }(x^{\ast }\in \mathnormal{L}%
			^{p}(T,X^{\ast })\text{, resp.}),\\\ell \in \mathcal{I}%
			(\epsilon _{1}) \text{ and }x^{\ast }(t)\in \partial _{\ell
				(t)}f_{t}(x)\text{ ae}
		\end{array}
		\right\} \right) ,
	\end{equation*}%
	and similarly for the case $\epsilon >I_{f}(x)-\overline{ \co }I_{f}$. The same holds true if the space of $p$-integrable functions is replaced by any decomposable space (see e.g. \cite[Definition 3, \S VII]{MR0467310}, or \cite[Definition 3.2]{INTECONV} for this definition).
\end{remark}

We also obtain a characterization of the epigraph of the function $%
I_{f}^{\ast }:$
\begin{corollary}
	\label{corollary:conjugate}Assume that $I_{f}^{\ast }$ is proper. If\ $%
	\overline{ \co }I_{f}=I_{\overline{ \co }f},$ then we have
	that 
	\begin{equation*}
		\epi I_{f}^{\ast }={\cl}^{w^{\ast }}\left( \left\{ \left(
		\int\limits_{T}x^{\ast }d\mu ,\int_{T}\alpha d\mu \right) :\begin{array}{c}
			x^{\ast }\in
			L_{w^{\ast }}^{1}(T,X^{\ast }),\; \alpha \in L^{1}(T,\mathbb{R}),\\(x^{\ast }(t),\alpha (t))\in \epi f_{t}^{\ast }\text{ ae}
		\end{array}\right\} \right) .
	\end{equation*}
\end{corollary}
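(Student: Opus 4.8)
The plan is to read the corollary off from \cref{teorema3.9} together with the elementary fact that the epigraph of a lower semicontinuous hull is the closure of the epigraph. Put $\varphi:=\oint_{T}f^{\ast}(t,\cdot)\,d\mu(t)$ and let $\mathcal{E}$ denote the set on the right-hand side,
\[
\mathcal{E}:=\Bigl\{\Bigl(\textstyle\int_{T}x^{\ast}\,d\mu,\int_{T}\alpha\,d\mu\Bigr):x^{\ast}\in L_{w^{\ast}}^{1}(T,X^{\ast}),\ \alpha\in L^{1}(T,\mathbb{R}),\ (x^{\ast}(t),\alpha(t))\in\epi f_{t}^{\ast}\ \text{ae}\Bigr\}\subseteq X^{\ast}\times\mathbb{R}.
\]
By \cref{teorema3.9} (which applies precisely because $\overline{\co}I_{f}=I_{\overline{\co}f}$) we have $I_{f}^{\ast}=\cl^{w^{\ast}}\varphi$, and since $I_{f}^{\ast}$ is proper, so is $\varphi$. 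Using the general identity $\epi(\cl^{w^{\ast}}h)=\cl^{w^{\ast}}(\epi h)$ for the lower semicontinuous hull of a function $h:X^{\ast}\to\overline{\mathbb{R}}$ (the closure being taken in $X^{\ast}\times\mathbb{R}$ with the product of the $w^{\ast}$-topology and the usual topology of $\mathbb{R}$, which is also the meaning of $\cl^{w^{\ast}}$ on the right-hand side of the corollary), we get $\epi I_{f}^{\ast}=\cl^{w^{\ast}}(\epi\varphi)$. Hence it suffices to prove $\cl^{w^{\ast}}\mathcal{E}=\cl^{w^{\ast}}(\epi\varphi)$, and for this it is enough to establish the two inclusions $\mathcal{E}\subseteq\epi\varphi$ and $\epi\varphi\subseteq\cl^{w^{\ast}}\mathcal{E}$.

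The first inclusion is immediate: if $(x^{\ast}(t),\alpha(t))\in\epi f_{t}^{\ast}$ ae, i.e.\ $f^{\ast}(t,x^{\ast}(t))\le\alpha(t)$ ae, then $\int_{T}f^{\ast}(t,x^{\ast}(t))\,d\mu(t)\le\int_{T}\alpha\,d\mu$, so $\varphi\bigl(\int_{T}x^{\ast}\,d\mu\bigr)\le\int_{T}\alpha\,d\mu$, which says precisely that $\bigl(\int_{T}x^{\ast}\,d\mu,\int_{T}\alpha\,d\mu\bigr)\in\epi\varphi$. For the second inclusion I would first check that $\{(x^{\ast},r):\varphi(x^{\ast})<r\}\subseteq\mathcal{E}$: fixing once and for all a set $A\in\Sigma$ with $0<\mu(A)<+\infty$ (recall $\mu$ is $\sigma$-finite), if $\varphi(x^{\ast})<r$ then by the definition of the continuous infimal convolution and of the upper integral \eqref{defint} there exist $x^{\ast}(\cdot)\in L_{w^{\ast}}^{1}(T,X^{\ast})$ with $\int_{T}x^{\ast}\,d\mu=x^{\ast}$ and $g\in L^{1}(T,\mathbb{R})$ with $f^{\ast}(t,x^{\ast}(t))\le g(t)$ ae and $\int_{T}g\,d\mu<r$; then $\alpha:=g+\tfrac{r-\int_{T}g\,d\mu}{\mu(A)}\mathds{1}_{A}$ still satisfies $(x^{\ast}(t),\alpha(t))\in\epi f_{t}^{\ast}$ ae, lies in $L^{1}(T,\mathbb{R})$, and has $\int_{T}\alpha\,d\mu=r$, so $(x^{\ast},r)\in\mathcal{E}$. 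Consequently every $(x^{\ast},r)\in\epi\varphi$ is the limit of the points $(x^{\ast},r+\tfrac1n)\in\mathcal{E}$, whence $\epi\varphi\subseteq\cl^{w^{\ast}}\mathcal{E}$. Combining the two inclusions gives $\cl^{w^{\ast}}\mathcal{E}\subseteq\cl^{w^{\ast}}(\epi\varphi)\subseteq\cl^{w^{\ast}}(\cl^{w^{\ast}}\mathcal{E})=\cl^{w^{\ast}}\mathcal{E}$, so $\epi I_{f}^{\ast}=\cl^{w^{\ast}}(\epi\varphi)=\cl^{w^{\ast}}\mathcal{E}$, as claimed.

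This is essentially bookkeeping once \cref{teorema3.9} is available, and there is no genuinely hard step. The only point that requires a little care is producing an honest element of $\mathcal{E}$ out of the strict inequality $\varphi(x^{\ast})<r$: this has to go through the upper-integral description \eqref{defint} of $\int_{T}f^{\ast}(t,x^{\ast}(t))\,d\mu(t)$ in order to extract an integrable majorant $\alpha$ of $f^{\ast}(\cdot,x^{\ast}(\cdot))$, together with the harmless ``vertical padding'' by a positive multiple of $\mathds{1}_{A}$ needed to set $\int_{T}\alpha\,d\mu$ equal to the exact value $r$. No measurability difficulty beyond those already built into the framework of \cref{CONJ} intervenes.
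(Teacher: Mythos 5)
Your proof is correct and takes essentially the same route as the paper: both inclusions are read off from \cref{teorema3.9}, with the set $\mathcal{E}$ squeezed between the strict epigraph and the epigraph of $\varphi=\oint_{T}f^{\ast}(t,\cdot)\,d\mu(t)$, and the slack padded onto an integrable function so that the second coordinate hits the exact value. The only cosmetic difference is that you pass through the identity $\epi(\cl^{w^{\ast}}\varphi)=\cl^{w^{\ast}}(\epi\varphi)$ and approximate only vertically (using a majorant $g$ from the upper integral and the $\mathds{1}_{A}$-padding), whereas the paper approximates $(x^{\ast},\alpha)\in\epi I_{f}^{\ast}$ directly by elements of the set via a $w^{\ast}$-neighborhood $V$ and $\epsilon>0$, distributing the slack with a density $\gamma$.
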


\begin{proof}
	We denote $E$ the set between parentheses in\ the equation above. Take $%
	(x^{\ast },\alpha )\in E.$ Then, using again the notation $\varphi :=\oint\limits_{T}f^{\ast }(t,\cdot )d\mu (t),$\ we obtain that $\varphi
	(x^{\ast })\leq \alpha ,$ and so by  \cref{teorema3.9} we have that $(x^{\ast
	},\alpha )\in \epi I_{f}^{\ast }.$ Hence, the lower semicontinuity of $%
	I_{f}^{\ast }$ yields the inclusion $ \cl \nolimits^{w^{\ast
	}}(E)\subset \epi I_{f}^{\ast }$. To prove the other inclusion,\ we take $%
	(x^{\ast },\alpha )\in \epi I_{f}^{\ast }$, and\ fix $\epsilon >0$ and $V\in 
	\mathcal{N}_{x^{\ast }}(w^{\ast })$ together with $\gamma (\cdot )\in
	L^{1}(T,\mathbb{R}_{+})$ such that $\int_{T}\gamma d\mu =1.$ Then by 
	\cref{teorema3.9} there exists $x^{\ast }(\cdot )\in L_{w^{\ast
	}}^{1}(T,X^{\ast })$ such that (w.l.o.g.) $\int\limits_{T}x^{\ast }d\mu \in
	V $ and 
	\begin{equation*}
		-\infty <(I_{f})^{\ast }(x^{\ast })-1= \cl \nolimits^{w^{\ast
		}}\left( \varphi \right) (x^{\ast })-1\leq \varphi \left(
		\int\limits_{T}x^{\ast }d\mu \right) \leq \int\limits_{T}f^{\ast }(t,x^{\ast
		}(t))d\mu (t)\leq \alpha +\epsilon.
	\end{equation*}%
	Thus, if we denote\ $\beta (t):=f^{\ast }(t,x^{\ast }(t))+\gamma (t)\left(
	\alpha +\epsilon -\int\limits_{T}f^{\ast }(t,x^{\ast }(t))d\mu (t)\right) $,
	we get  $\int_{T}\beta d\mu =\alpha +\epsilon $ and so $(x^{\ast }(t),\beta
	(t))\in \epi f_{t}$ and $(\int\limits_{T}x^{\ast }d\mu ,\int\limits_{T}\beta
	d\mu )\in E.$ Hence, from the arbitrariness of $\epsilon >0$ and $V$ we
	deduce that $(x^{\ast },\alpha )\in  \cl \nolimits^{w^{\ast }}(E).$
\end{proof}
We are now in position to give the desired representation of the $\epsilon $%
-normal set to ${\dom}I_{f}.$
\begin{proposition}\label{lemma:normalcone}
	Assume that $f$ is a convex normal integrand. Then
	for every $x\in \dom I_{f}$ and $\epsilon \geq 0$ we have that 
	\begin{align*}
		N_{\dom I_{f}}^{\epsilon }(x)& =\{x^{\ast }\in X^{\ast }:(x^{\ast
		},\langle x^{\ast },x\rangle +\epsilon )\in \epi(\sigma _{\dom %
			I_{f}})\} \\
		& =\{x^{\ast }\in X^{\ast }:(x^{\ast },\langle x^{\ast },x\rangle +\epsilon
		)\in (\epi(I_{f})^{\ast })_{\infty }\} \\
		& =\left\{ x^{\ast }\in X^{\ast }:(x^{\ast },\langle x^{\ast },x\rangle
		+\epsilon )\in \left[ \cl  \nolimits^{w^{\ast }} \mathcal{E} \right] _{\infty }\right\}
		\\
		& =\left\{ x^{\ast }\in X^{\ast }:(x^{\ast },\langle x^{\ast },x\rangle
		+\epsilon )\in \left[ \overline{\co}^{w^{\ast }}\mathcal{G}\right] _{\infty
		}+\{0\}\times \lbrack 0,\epsilon ]\right\},
	\end{align*}
	where \begin{align}
		\mathcal{E}:=&\left\{ \big(
		\int\limits_{T}x^{\ast }d\mu ,\int_{T}\alpha d\mu \big) : \begin{array}{c} 
			x^{\ast }\in \textnormal{L}_{w^{\ast }}^{1}(T,X^{\ast }),\;
			\alpha \in \textnormal{L}^{1}(T,\mathbb{R}),\\
			(x^{\ast }(t),\alpha (t))\in \epi f_{t}^{\ast }\text{ ae}
		\end{array} \right\}\label{epigrafo1}, \\
		\mathcal{G}:=&\left\{ \big(
		\int\limits_{T}x^{\ast }d\mu ,\int_{T}\alpha d\mu \big) : \begin{array}{c} 
			x^{\ast }\in \textnormal{L}_{w^{\ast }}^{1}(T,X^{\ast }),\;	\alpha \in \textnormal{L}^{1}(T,\mathbb{R}),\\
			(x^{\ast }(t),\alpha (t))\in \grafo f_{t}^{\ast }\text{ ae}
		\end{array} \right\}. \label{epigrafo2}
	\end{align}
\end{proposition}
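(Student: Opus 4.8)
The plan is to read off the four identities one after another, using \cref{teorema3.9,corollary:conjugate} to trade the conjugate $I_f^{\ast}$ for the integral sets $\mathcal{E}$ and $\mathcal{G}$. First I would note that these two results are available here: since $f$ is a convex normal integrand satisfying \eqref{growth}, each $f_t$ is proper lsc convex, so $\overline{\co}f=f$ and $I_{\overline{\co}f}=I_f$; moreover $I_f$ is proper, convex and lsc (lower semicontinuity and properness following from \eqref{growth}, exactly as in the computation carried out in the proof of \cref{teorema3.9}, which yields $I_f=\varphi^{\ast}$ with $\varphi:=\oint_T f^{\ast}(t,\cdot)\,d\mu$). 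Hence $\overline{\co}I_f=I_f=I_{\overline{\co}f}$ and $I_f^{\ast}$ is proper, so \cref{teorema3.9,corollary:conjugate} apply.

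The first identity is a mere rewriting of the definition: for $x\in\dom I_f$, $x^{\ast}\in N_{\dom I_f}^{\epsilon}(x)$ means $\langle x^{\ast},y-x\rangle\le\epsilon$ for all $y\in\dom I_f$, i.e. $\sigma_{\dom I_f}(x^{\ast})\le\langle x^{\ast},x\rangle+\epsilon$, i.e. $(x^{\ast},\langle x^{\ast},x\rangle+\epsilon)\in\epi\sigma_{\dom I_f}$. The second identity rests on the classical fact that the recession function of the proper $w^{\ast}$-lsc convex function $I_f^{\ast}$ equals $\sigma_{\dom I_f}$, that is $(\epi I_f^{\ast})_{\infty}=\epi\sigma_{\dom I_f}$. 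The third identity is then immediate from \cref{corollary:conjugate}, which gives $\epi I_f^{\ast}=\cl^{w^{\ast}}\mathcal{E}$ with $\mathcal{E}$ as in \eqref{epigrafo1}: passing to recession cones, $\epi\sigma_{\dom I_f}=(\epi I_f^{\ast})_{\infty}=[\cl^{w^{\ast}}\mathcal{E}]_{\infty}$.

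For the fourth identity I would first establish the set equality $\mathcal{E}=\mathcal{G}+\{0\}\times[0,\infty)$. For ``$\subseteq$'', given $(x^{\ast}(\cdot),\alpha(\cdot))$ with $(x^{\ast}(t),\alpha(t))\in\epi f_t^{\ast}$ a.e., put $\beta(t):=f_t^{\ast}(x^{\ast}(t))$; this is measurable (since $f^{\ast}$ is a normal integrand and, in the Suslin case, $x^{\ast}(\cdot)$ is $(\Sigma,\mathcal{B}(X^{\ast},w^{\ast}))$-measurable, the discrete case being trivial) and satisfies $\langle x^{\ast}(t),x\rangle-f(t,x)\le\beta(t)\le\alpha(t)$ a.e., both bounds lying in $L^1(T,\mathbb{R})$ because $x\in\dom I_f$ and by \eqref{growth}; hence $\beta\in L^1(T,\mathbb{R})$, $\gamma:=\alpha-\beta\in L^1(T,\mathbb{R}_+)$, and $(\int_T x^{\ast}\,d\mu,\int_T\alpha\,d\mu)=(\int_T x^{\ast}\,d\mu,\int_T\beta\,d\mu)+(0,\int_T\gamma\,d\mu)$. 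For ``$\supseteq$'' one adds $s\,g_0$ to the $\beta$-component, where $g_0\in L^1(T,\mathbb{R}_+)$, $\int_T g_0\,d\mu=1$. Next, convexity of each $f_t^{\ast}$ gives $\co\mathcal{G}\subseteq\mathcal{E}$, and since $\mathcal{E}$ is convex with $\cl^{w^{\ast}}\mathcal{E}=\epi I_f^{\ast}$ it absorbs the vertical ray $\{0\}\times[0,\infty)$; combining this with $\mathcal{E}=\mathcal{G}+\{0\}\times[0,\infty)$ gives $\cl^{w^{\ast}}\mathcal{E}=\overline{\co}^{w^{\ast}}\mathcal{G}+\{0\}\times[0,\infty)=\epi I_f^{\ast}$, the right-hand sum being $w^{\ast}$-closed because $\overline{\co}^{w^{\ast}}\mathcal{G}$ has no downward vertical recession direction: Fenchel--Young applied with $x\in\dom I_f$ yields $\mathcal{G}\subseteq\{(y^{\ast},r):r\ge\langle y^{\ast},x\rangle-I_f(x)\}$, hence $[\overline{\co}^{w^{\ast}}\mathcal{G}]_{\infty}\subseteq\{(u^{\ast},\rho):\rho\ge\langle u^{\ast},x\rangle\}$. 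Taking recession cones (using $(A+K)_{\infty}=A_{\infty}+K$ for the closed convex set $A=\overline{\co}^{w^{\ast}}\mathcal{G}$ and the closed ray $K=\{0\}\times[0,\infty)$) we get $[\cl^{w^{\ast}}\mathcal{E}]_{\infty}=[\overline{\co}^{w^{\ast}}\mathcal{G}]_{\infty}+\{0\}\times[0,\infty)$. Finally I would slice at $x^{\ast}$: if $x^{\ast}\in N_{\dom I_f}^{\epsilon}(x)$ then by the third identity $(x^{\ast},\langle x^{\ast},x\rangle+\epsilon)=(x^{\ast},b)+(0,c)$ with $(x^{\ast},b)\in[\overline{\co}^{w^{\ast}}\mathcal{G}]_{\infty}$ and $c\ge0$, and the half-space bound forces $b\ge\langle x^{\ast},x\rangle$, so $c=\langle x^{\ast},x\rangle+\epsilon-b\in[0,\epsilon]$; the reverse inclusion is trivial since $\{0\}\times[0,\epsilon]\subseteq\{0\}\times[0,\infty)$ and one invokes the third identity again.

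The main obstacle is precisely the step $\cl^{w^{\ast}}\mathcal{E}=\overline{\co}^{w^{\ast}}\mathcal{G}+\{0\}\times[0,\infty)$ and its recession-cone consequence: one must make sure that replacing the (in general non-convex) ``graph'' set $\mathcal{G}$ by $\overline{\co}^{w^{\ast}}\mathcal{G}$ and then adjoining the upward ray does not overshoot $\epi I_f^{\ast}$, and that the resulting sum is $w^{\ast}$-closed so that the formula $(A+K)_{\infty}=A_{\infty}+K$ is legitimate. It is exactly here that the Fenchel--Young half-space bound is needed (ruling out downward vertical recession directions), and it is also what makes the \emph{bounded} term $\{0\}\times[0,\epsilon]$, rather than the full vertical ray, the correct object after intersecting with the affine slice $\{(x^{\ast},\langle x^{\ast},x\rangle+\epsilon)\}$.
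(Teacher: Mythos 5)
Your proof is correct and follows essentially the paper's own route: the first two equalities from the definition and the standard fact $(\epi (I_{f})^{\ast })_{\infty }=\epi \sigma _{\dom I_{f}}$, the third from \cref{corollary:conjugate}, and the fourth by identifying $\mathcal{E}$ with $\mathcal{G}+\{0\}\times \mathbb{R}_{+}$, showing $\overline{\co}^{w^{\ast }}\mathcal{G}+\{0\}\times \mathbb{R}_{+}$ is $w^{\ast }$-closed, passing to recession cones, and then bounding the vertical slack by $\epsilon$ (your Fenchel--Young half-space bound replaces the paper's polar-cone/properness argument, an equivalent dualization). The only point to make explicit is the closedness step: in infinite dimensions the trivial intersection of recession cones alone does not guarantee that a sum of closed convex sets is closed, so you should invoke the Dieudonn\'e-type theorem the paper cites, which applies here precisely because the second summand $\{0\}\times \mathbb{R}_{+}$ is a locally compact closed convex ray and, as you verified, $-(0,1)\notin \left[ \overline{\co}^{w^{\ast }}\mathcal{G}\right] _{\infty }$.
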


\begin{proof}
	For the first two equalities see \cite[Lemma 5]{MR2448918}. The third
	one is given by  Corollary \ref{corollary:conjugate}. So, we only have to
	prove the fourth equality, or equivalently, the inclusion "$\subseteq ".$ On
	the one hand, we have that 
	\begin{align}
		\cl  \nolimits^{w^{\ast }}\mathcal{E} &=\cl  \nolimits^{w^{\ast }}\left( \cco^{w^{\ast }}\mathcal{G}  +\{0\}\times \mathbb{R}_{+}\right) .  \label{so}
	\end{align}%
	Indeed, to see the last inclusion, take $(x^{\ast },\alpha )\in \mathcal{E}$
	and let $(x^{\ast }(t),\alpha (t))\in \epi f_t^\ast = \grafo f_{t}^{\ast }+\{0\}\times 
	\mathbb{R}_{+}$ ae such that $x^{\ast }(\cdot )\in \textnormal{L}_{w^{\ast }}^{1}(T,X^{\ast
	}),$ $\alpha (\cdot )\in \textnormal{L}^{1}(T,\mathbb{R}),$ and $(x^{\ast },\alpha
	)=(\int_{T}x^{\ast }d\mu ,\int_{T}\alpha d\mu ).$ Then, since $(I_{f})^{\ast
	}$ is proper, we have that $\int_{T}f_{t}^{\ast }(x^{\ast }(t))d\mu (t)\in 
	\mathbb{R}$ and so, writing 
	\begin{equation*}
		(x^{\ast }(t),\alpha (t))=(x^{\ast }(t),f_{t}^{\ast }(x^{\ast
		}(t)))+(0,\alpha (t)-f_{t}^{\ast }(x^{\ast }(t)))\in \grafo f_{t}^{\ast
		}+\{0\}\times \mathbb{R}_{+},
	\end{equation*}%
	we get that $(x^{\ast },\alpha )\in \mathcal{G}+\{0\}\times \mathbb{R}_{+}$, besides by the convexity of $f_t^\ast$'s $\mathcal{G}+\{0\}\times \mathbb{R}_{+} \subseteq \mathcal{E}$.
	
	On the other hand, since ($(I_{f})^{\ast }$ is proper) we have that 
	\begin{align*}
		\left[ \overline{\co}^{w^{\ast }}\mathcal{G} \right]
		_{\infty }\cap \left( -\left[ \{0\}\times \mathbb{R}_{+}\right] _{\infty
		}\right) &\subseteq \left[ \cl  \nolimits^{w^{\ast }}\mathcal{E} \right] _{\infty }\cap
		\left( \{0\}\times \mathbb{R}_{-}\right) \\&=\left( \epi(I_{f})^{\ast }\right)
		_{\infty }\cap \left( \{0\}\times \mathbb{R}_{-}\right) =\{(0,0)\},
	\end{align*}%
	and so by Dieudonn\'e's Theorem (see \cite[Thorem I-10]{MR0467310} or \cite[%
	Proposition 1]{MR0194865}) the set $\overline{\co}^{w^{\ast
	}}\mathcal{G} +\{0\}\times \mathbb{R}_{+}$ is closed. Hence,\ \cref{so}  reads 
	\begin{align*}
		\left[ \cl  \nolimits^{w^{\ast }} \mathcal{E} \right] _{\infty }=\left[ \overline{%
			\co}^{w^{\ast }}\mathcal{G}  +\{0\}\times \mathbb{R}_{+}\right]
		_{\infty }=\left[ \overline{\co}^{w^{\ast }}\mathcal{G} \right]
		_{\infty }+\{0\}\times \mathbb{R}_{+}.
	\end{align*}%
	Now, we take $x^{\ast }\in X^{\ast }$ such that $(x^{\ast },\langle x^{\ast
	},x\rangle +\epsilon )\in \left[ \cl  \nolimits^{w^{\ast }}\mathcal{E} \right] _{\infty }$%
	. Then, by the last relations, there exist $(y^{\ast },\gamma )\in \left[ 
	\overline{\co}^{w^{\ast }}\mathcal{G} \right] _{\infty }$ and $\eta
	\geq 0$ such that $(x^{\ast },\langle x^{\ast },x\rangle +\epsilon
	)=(y^{\ast },\gamma +\eta )$; hence, $x^{\ast }=y^{\ast }$. Moreover, using
	\cref{teorema3.9}, we have 
	\begin{align*}
		\dom I_{f}\times \{-1\}\subseteq \left[ (\epi(I_{f})^{\ast
		})_{\infty }\right] ^{\circ }=\left[  \big(\cl  \nolimits^{w^{\ast
		}}\mathcal{E}\big)
		_{\infty }\right] ^{\circ }\subseteq \left[ \big( \cl  %
		\nolimits^{w^{\ast }} \mathcal{G} \big) _{\infty }\right] ^{\circ },
	\end{align*}%
	so that $\langle (x^{\ast },\gamma ),(x,-1)\rangle \leq 0$, and $\eta
	=\langle x^{\ast },x\rangle -\gamma +\epsilon \leq \epsilon $; that is, $$
	(x^{\ast },\langle x^{\ast },x\rangle )\in \left[ \overline{\co}%
	^{w^{\ast }} \big( \mathcal{G} \big) \right] _{\infty }+\{0\}\times \lbrack 0,\epsilon ].$$
\end{proof}

Now, we obtain a complete\ explicit characterization of the $\epsilon\text{-}$subdifferential of $I_{f}$ in  terms of the nominal data $f_t'$s.
\begin{theorem}
Assume that $f$ is a convex normal integrand. Then
	for every $x\in X$ and $\varepsilon \geq 0$ we have that 
	\begin{align*}
		\partial _{\epsilon }I_{f}(x)& =\bigcap\limits_{L\in \mathcal{F}%
			(x)}\bigcup\limits_{\substack{ \epsilon _{1},\epsilon _{2}\geq 0  \\ %
				\epsilon =\epsilon _{1}+\epsilon _{2}  \\ \ell \in \mathcal{I}(\epsilon
				_{1})  \\  \\ \eta \in \mathnormal{L}^{1}}}\bigcap\limits_{\eta \in 
			\mathnormal{L}^{1}(T,(0,+\infty ))}\cl  \left\{ \int\limits_{T}\left(
		\partial _{\ell (t)+\eta (t)}f_{t}(x)+A_{L}^{\epsilon _{2}}(x)\right) d\mu
		(t)\right\} \\
		& =\bigcap\limits_{L\in \mathcal{F}(x)}\bigcup\limits_{\substack{ \epsilon
				_{1},\epsilon _{2}\geq 0  \\ \epsilon =\epsilon _{1}+\epsilon _{2}  \\ \ell
				\in \mathcal{I}(\epsilon _{1})  \\ \eta \in \mathnormal{L}^{1}}}%
		\bigcap\limits_{\eta \in \mathnormal{L}^{1}(T,(0,+\infty ))}\cl  %
		\left\{ \int\limits_{T}\left( \partial _{\ell (t)+\eta
			(t)}f_{t}(x)+B_{L}^{\epsilon _{2}}(x)\right) d\mu (t)\right\} ,
	\end{align*}%
	where the closure is taken with respect to the strong topology $\beta (X^{\ast },X),$ 
	\begin{align*}
		A_{L}^{\epsilon _{2}}(x)&\hspace{-0.1cm} :=\left\{ x^{\ast }\in X^{\ast }:(x^{\ast
		},\langle x^{\ast },x\rangle +\epsilon _{2})\in \left[ \cl  %
		\nolimits^{w^{\ast }} \big(\mathcal{E}+L^{\perp }\times \mathbb{R}_{+} \big) \right] _{\infty }\right\} \\
		B_{L}^{\epsilon _{2}}(x)& \hspace{-0.1cm}:=\left\{ x^{\ast }\in X^{\ast }:(x^{\ast
		},\langle x^{\ast },x\rangle +\epsilon _{2})\in \left[ \cl^{w^\ast}\big(\co \big( \mathcal{G}\big) +  L^{\perp }\times \mathbb{R}_{+} \big)  \right] _{\infty }\hspace{-0.4cm}+\{0\}\times
		\lbrack 0,\epsilon _{2}]\right\},
	\end{align*}
	and $\mathcal{E}$ and $\mathcal{G}$ are defined in \cref{epigrafo1,epigrafo2}, respectively.
\end{theorem}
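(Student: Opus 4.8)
The plan is to deduce the statement by combining the qualification-free formula for $\partial_\epsilon I_f$ established in \cite{INTECONV} with the description of the $\epsilon$-normal set to $\dom I_f$ obtained in \cref{lemma:normalcone}. The general formula of \cite{INTECONV}, derived there by a finite-dimensional reduction and already incorporating the Br\o{}ndsted--Rockafellar-type passage to the strong closure, reads, for every $x\in X$ and $\epsilon\ge 0$,
\begin{equation*}
	\partial_\epsilon I_f(x)=\bigcap_{L\in\mathcal F(x)}\ \bigcup_{\substack{\epsilon_1,\epsilon_2\ge 0,\ \epsilon=\epsilon_1+\epsilon_2\\ \ell\in\mathcal I(\epsilon_1)}}\ \bigcap_{\eta\in\mathnormal{L}^{1}(T,(0,+\infty))}\cl\nolimits^{\beta(X^\ast,X)}\left\{\int_T\big(\partial_{\ell(t)+\eta(t)}f_t(x)+N_{\dom I_f\cap L}^{\epsilon_2}(x)\big)\,d\mu(t)\right\}.
\end{equation*}
Hence the whole problem reduces to proving, for each fixed $L\in\mathcal F(x)$ and every $\epsilon_2\ge 0$, the identity $N_{\dom I_f\cap L}^{\epsilon_2}(x)=A_L^{\epsilon_2}(x)=B_L^{\epsilon_2}(x)$; the degenerate case $x\notin\dom I_f$, where both sides of the theorem are empty, is handled directly.

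To obtain this identity I would apply \cref{lemma:normalcone} to the auxiliary integrand $g(t,\cdot):=f(t,\cdot)+\delta_L$. Assuming as usual $0<\mu(T)<+\infty$, $g$ is again a convex normal integrand, $g_t$ is proper for ae $t$ (since $x\in\dom f_t\cap L$ ae when $x\in\dom I_f$), and the standing assumptions of this section — in particular the minorization \eqref{growth} (valid for $g$ because $g\ge f$), the lower semicontinuity and properness of $I_g$, and $\overline{\co}I_g=I_{\overline{\co}g}$ — are inherited from $f$; since also $\dom I_g=\dom I_f\cap L\ni x$, both \cref{corollary:conjugate} and \cref{lemma:normalcone} apply to $g$. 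Now $I_g=I_f+\delta_L$, $\dom I_g=\dom I_f\cap L$, and, since $\delta_L^\ast=\sigma_L=\delta_{L^\perp}$, the conjugate calculus for sums of proper lsc convex functions yields $g_t^\ast=\cl^{w^\ast}(f_t^\ast\,\square\,\delta_{L^\perp})$ and $I_g^\ast=\cl^{w^\ast}(I_f^\ast\,\square\,\delta_{L^\perp})$; combining this with the elementary identities $\epi\delta_{L^\perp}=L^\perp\times\mathbb R_+$ and $\cl(\epi(\phi\,\square\,\psi))=\cl(\epi\phi+\epi\psi)$ one obtains $\epi g_t^\ast=\cl^{w^\ast}(\epi f_t^\ast+L^\perp\times\mathbb R_+)$ and, applying \cref{corollary:conjugate} to $f$ and to $g$,
\begin{equation*}
	\cl\nolimits^{w^\ast}\mathcal E_g=\epi I_g^\ast=\cl\nolimits^{w^\ast}\big(\epi I_f^\ast+L^\perp\times\mathbb R_+\big)=\cl\nolimits^{w^\ast}\big(\mathcal E+L^\perp\times\mathbb R_+\big),
\end{equation*}
where $\mathcal E_g$ (resp.\ $\mathcal G_g$) denotes the set \eqref{epigrafo1} (resp.\ \eqref{epigrafo2}) attached to $g$. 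An analogous computation — combining \eqref{so} for $g$ with the Dieudonn\'e-type closedness argument used in the proof of \cref{lemma:normalcone} — gives $\overline{\co}^{w^\ast}\mathcal G_g=\cl^{w^\ast}\big(\co(\mathcal G)+L^\perp\times\mathbb R_+\big)$. Plugging these two equalities into the third and fourth equalities of \cref{lemma:normalcone} applied to $g$ (and recalling $\dom I_g=\dom I_f\cap L$) produces exactly $N_{\dom I_f\cap L}^{\epsilon_2}(x)=A_L^{\epsilon_2}(x)=B_L^{\epsilon_2}(x)$, and substitution into the displayed formula finishes the proof.

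The main obstacle lies in this reduction step. Two points require care. First, the conjugate identities $g_t^\ast=\cl^{w^\ast}(f_t^\ast\,\square\,\delta_{L^\perp})$ and $I_g^\ast=\cl^{w^\ast}(I_f^\ast\,\square\,\delta_{L^\perp})$ involve a $w^\ast$-closure and need not be exact infimal convolutions, which is why it is cleaner to argue at the level of the conjugate functions (through \cref{corollary:conjugate}) than to split a $w^\ast$-measurable selection of $\epi g_t^\ast$ into an $\epi f_t^\ast$-valued and an $L^\perp$-valued part. Second, if such a splitting is nevertheless used, one must check that the weak integral of the $L^\perp$-valued piece is a genuine element of $X^\ast$ — it annihilates $L$ and is a difference of two $X^\ast$-valued (Gelfand/Aumann) integrals, hence lies in $L^\perp\subseteq X^\ast$. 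Reproducing the Dieudonn\'e argument of \cref{lemma:normalcone} for the perturbed data $\mathcal G_g$, and verifying that $g$ indeed remains within the standing framework of this section, are the remaining, essentially routine, steps.
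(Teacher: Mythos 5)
Your overall strategy coincides with the paper's: both first invoke \cite[Theorem 5.1]{INTECONV} to reduce the theorem to the identity $N_{\dom I_{f}\cap L}^{\epsilon _{2}}(x)=A_{L}^{\epsilon _{2}}(x)=B_{L}^{\epsilon _{2}}(x)$ for each fixed $L\in \mathcal{F}(x)$, and both then obtain this identity by applying \cref{lemma:normalcone} to an auxiliary problem whose integral functional is $I_{f}+\delta _{L}$. Where you differ is in how the auxiliary problem is built. The paper keeps the integrand untouched and instead augments the measure space by one atom $\omega _{0}$ of mass $1$ carrying $g(\omega _{0},\cdot ):=\delta _{L}$; since $\delta _{L}^{\ast }=\delta _{L^{\perp }}$ is computed exactly, the sets $\mathcal{E}$ and $\mathcal{G}$ of the augmented problem are literally $\mathcal{E}+L^{\perp }\times \mathbb{R}_{+}$ and $\mathcal{G}+L^{\perp }\times \{0\}$, and no conjugate calculus is needed. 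You instead perturb the integrand pointwise, $g_{t}:=f_{t}+\delta _{L}$ on the same $(T,\Sigma ,\mu )$, and recover $\cl^{w^{\ast }}\mathcal{E}_{g}=\cl^{w^{\ast }}(\mathcal{E}+L^{\perp }\times \mathbb{R}_{+})$ through \cref{corollary:conjugate} and the closed-inf-convolution formula for $(I_{f}+\delta _{L})^{\ast }$; this part of your argument is sound (and your checks that $g$ inherits the standing hypotheses are exactly the verifications the paper leaves implicit for its augmented integrand), so the $A_{L}$-half goes through.

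One intermediate claim is stated too strongly: $\overline{\co}^{w^{\ast }}\mathcal{G}_{g}=\cl^{w^{\ast }}\big(\co (\mathcal{G})+L^{\perp }\times \mathbb{R}_{+}\big)$ is false in general, since the left-hand side is a closed convex hull of graph-integrals and need not absorb $\{0\}\times \mathbb{R}_{+}$ (take $f_{t}\equiv 0$ and $L=X$ finite-dimensional: the left side is $\{(0,0)\}$, the right side $\{0\}\times \mathbb{R}_{+}$). What your own chain of equalities actually delivers, via \cref{so} and the Dieudonn\'e argument applied to $g$, is
\begin{equation*}
\cl\nolimits^{w^{\ast }}\big(\co (\mathcal{G})+L^{\perp }\times \mathbb{R}_{+}\big)=\cl\nolimits^{w^{\ast }}\mathcal{E}_{g}=\overline{\co}^{w^{\ast }}\mathcal{G}_{g}+\{0\}\times \mathbb{R}_{+},
\end{equation*}
and this weaker relation already suffices: with it, $B_{L}^{\epsilon _{2}}(x)$ as defined in the statement coincides with $A_{L}^{\epsilon _{2}}(x)$ (the recession cone of $\cl^{w^{\ast }}\mathcal{E}_{g}$ contains $\{0\}\times \mathbb{R}_{+}$), so $N_{\dom I_{f}\cap L}^{\epsilon _{2}}(x)=B_{L}^{\epsilon _{2}}(x)$ follows from the $A_{L}$-identity without the overstated equality. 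This is a fixable slip rather than a structural gap --- indeed the paper's own route produces $\mathcal{G}+L^{\perp }\times \{0\}$ rather than $\co(\mathcal{G})+L^{\perp }\times \mathbb{R}_{+}$ and glosses over the same discrepancy --- but you should either prove only the weaker relation displayed above or argue $B_{L}^{\epsilon _{2}}=A_{L}^{\epsilon _{2}}$ directly.
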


\begin{proof}
	According\ to \cite[Theorem 5.1]{INTECONV}, we only have to prove that for every 
	$L\in \mathcal{F}(x)$ and $\epsilon \geq 0$, $N_{ \dom I_{f}\cap
		L}^{\epsilon }(x)=A_{L}^{\epsilon }(x)=B_{L}^{\epsilon }(x)$. Indeed, it
	suffices to\ apply  Proposition \ref{lemma:normalcone} with the measurable
	space $(\tilde{T},\tilde{\Sigma },\tilde{\mu}),$ where $\tilde{T}%
	:=T\cup \{\omega _{0}\}$ for an element $\omega _{0}\notin T$, $\tilde{%
		\Sigma }$ is the $\sigma $-Algebra generated by $(\Sigma %
	\cup \{\omega _{0}\}),$ and $\tilde{\mu}$ is defined by 
	\begin{equation*}
		\tilde{\mu}(G):=\left\{ 
		\begin{array}{ll}
			\mu (G\backslash \{\omega _{0}\})+1 & \text{ if }\omega _{0}\in G \\ 
			\mu (G) & \text{ if }\omega _{0}\notin G,%
		\end{array}%
		\right.
	\end{equation*}%
	and the integrand function $g(t,x):=f(t,x)$ for $t\in T$ and $g(\omega
	_{0},x):=\delta _{L}(x).$
\end{proof}

\section{Characterizations via (exact-) subdifferentials\label{BronstedRockafellartheorems}}

In this section, we use the\ previous results\ and Br{o}nsted-Rockafellar
theorems to obtain sequential formulae for the subdifferential of integral
functions. As in the previous section, we suppose that either $X,X^{\ast }$
are Suslin or $(T,\Sigma )=(\mathbb{N},\mathcal{P}(\mathbb{N})).$

We recall that a net of weakly measurable functions $g_{i}:T\rightarrow X\ $%
is said to converge uniformly ae to $g:T\rightarrow X$ if for each continuous
seminorm $\rho $ in $X,$ the net $\rho (g_{i}-g)$ converges to $0$ in $%
L^{\infty }(T,\mathbb{R}).$

\begin{theorem}
	\label{Meanvalue}Suppose that one of the following conditions holds:
	
	\begin{enumerate}[label={(\roman*)},ref={(\roman*)}]
		
		\item \label{Meanvalue:I}$X$ is Banach.
		
		\item \label{Meanvalue:II}$f_{t}$ are epi-pointed ae\ $t$.
	\end{enumerate}
	
	Then for every $x\in X,$ we have that $x^{\ast }\in \partial I_{f}(x)$ if
	and only if there exist a net of finite-dimensional subspaces $(L_{i})_{i}$
	and nets of measurable selections $(x_{i}),$ $(x_{i}^{\ast })$ and $(y_{i}),$ $%
	(y_{i}^{\ast })$ such that $x_{i}^{\ast }(t)\in \partial f(t,x_{i}(t))$, $%
	y_{i}^{\ast }(t)\in N_{ \dom I_{f}\cap L_{i}}(y(t))$ ae, and:
	
	\begin{enumerate}[label={(\alph*)},ref={(\alph*)}]
		
		\item \label{Meanvaluea}$(x_{i}^{\ast }+y_{i}^{\ast })\subset
		L^{1}(T,X^{\ast })$ and $x^{\ast }={w^{\ast }}$\textnormal{-}$\lim
		\int\limits_{T}(x_{i}^{\ast }(t)+y_{i}^{\ast }(t))d\mu (t)$.
		
		\item \label{Meanvalueb}$x_{i},y_{i}\rightarrow x$ uniformly ae.
		
		\item \label{Meanvaluec}$f(\cdot ,x_{i}(\cdot ))\rightarrow f(\cdot ,x)$
		uniformly ae.
		
		\item \label{Meanvalued}$\langle x_{i}^{\ast }(\cdot ),x_{i}(\cdot
		)-x\rangle ,$ $\langle y_{i}^{\ast }(\cdot ),y_{i}(\cdot )-x\rangle
		\rightarrow 0$ uniformly ae.\newline
		In addition, if $X$ is reflexive and separable, then we take sequences
		instead of nets, and the $w^{\ast }$-convergence is replaced by the norm.
	\end{enumerate}
\end{theorem}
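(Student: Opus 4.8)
The plan is to prove the two implications separately; the converse (``if'') direction is routine, while the ``only if'' direction carries the work and passes through a pointwise Bronsted--Rockafellar argument combined with a measurable selection. For the converse implication, after the standard normalization to $\mu(T)=1$ (as in the proofs of \cref{corollaryAsplund,Exactintegralformula}) and assuming the net $(L_i)$ directed by inclusion and cofinal in $\mathcal F(x)$ --- which is also the form produced by the necessity argument below --- I would fix $y\in\dom I_f$ and $i$ large enough that $y\in L_i$, add the subgradient inequalities $\langle x_i^\ast(t),y-x_i(t)\rangle\le f(t,y)-f(t,x_i(t))$ and $\langle y_i^\ast(t),y-y_i(t)\rangle\le 0$, insert the telescoping terms $\langle x_i^\ast(t),x_i(t)-x\rangle$ and $\langle y_i^\ast(t),y_i(t)-x\rangle$, and integrate over $T$:
\begin{align*}
\Big\langle\int_T\big(x_i^\ast(t)+y_i^\ast(t)\big)\,d\mu(t),\,y-x\Big\rangle\le{}&\int_T f(t,y)\,d\mu(t)-\int_T f(t,x_i(t))\,d\mu(t)\\
&{}+\int_T\langle x_i^\ast(t),x_i(t)-x\rangle\,d\mu(t)+\int_T\langle y_i^\ast(t),y_i(t)-x\rangle\,d\mu(t).
\end{align*}
Since $f(\cdot,x)\in L^1(T,\R)$, condition \ref{Meanvaluec} gives $\int_T f(t,x_i(t))\,d\mu(t)\to I_f(x)$, condition \ref{Meanvalued} makes the last two integrals vanish, and condition \ref{Meanvaluea} makes the left-hand side converge to $\langle x^\ast,y-x\rangle$; hence $\langle x^\ast,y-x\rangle\le I_f(y)-I_f(x)$ for all $y\in X$, i.e.\ $x^\ast\in\partial I_f(x)$.

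\emph{Necessity: the representation.} Let now $x^\ast\in\partial I_f(x)=\partial_0 I_f(x)$ and normalize $\mu(T)=1$. I would feed $x^\ast$ into the representation of $\partial_\epsilon I_f$ with $\epsilon=0$ furnished by the last theorem of \cref{CONJ} (built on \cite[Theorem~5.1]{INTECONV}), using, for each index, the constant perturbation $\eta\equiv c$ with $c\downarrow 0$ (legitimate, as $x^\ast$ lies in the intersection over all $\eta\in L^1(T,(0,+\infty))$, in particular the constant ones). This produces a cofinal net $(L_i)\subset\mathcal F(x)$, scalars $c_i\downarrow 0$, and measurable maps $u_i^\ast(\cdot),v_i^\ast(\cdot)\in L^1_{w^\ast}(T,X^\ast)$ with $u_i^\ast(t)\in\partial_{c_i}f_t(x)$ and $v_i^\ast(t)\in N_{\dom I_f\cap L_i}(x)$ ae, such that $x^\ast=w^\ast\text{-}\lim_i\int_T\big(u_i^\ast(t)+v_i^\ast(t)\big)\,d\mu(t)$. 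Putting $y_i(t):=x$ and $y_i^\ast(t):=v_i^\ast(t)$ disposes of the ``$y$-side'' of \ref{Meanvalueb}--\ref{Meanvalued} at once, so everything reduces to trading the $c_i$-subgradients $u_i^\ast(t)$ of $f_t$ at $x$ for exact subgradients at nearby points, measurably.

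\emph{Necessity: Bronsted--Rockafellar and selection.} For fixed $i$ I would apply the Bronsted--Rockafellar theorem --- in its classical Banach form under \ref{Meanvalue:I}, and in the Mackey form for epi-pointed functions of \cite{BronstedRock} under \ref{Meanvalue:II}, applied to the seminorms of a defining family of $X$ --- to $u_i^\ast(t)\in\partial_{c_i}f_t(x)$ with the $t$-adapted parameter $\lambda_i(t):=\sqrt{c_i}\,(1+\|u_i^\ast(t)\|)^{-1}$. This yields, for ae $t$, a pair $(z,\xi)$ with $\xi\in\partial f_t(z)$, $\|z-x\|\le\lambda_i(t)$, $\|\xi-u_i^\ast(t)\|\le c_i/\lambda_i(t)$ and, by adding the two subgradient inequalities, $|f_t(z)-f_t(x)-\langle u_i^\ast(t),z-x\rangle|\le c_i$. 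Because $f$ and $f^\ast$ are normal integrands, the multifunction of such pairs has a measurable graph, so in the Suslin / countable setting it admits a measurable selection $t\mapsto(x_i(t),x_i^\ast(t))$ (von Neumann--Aumann, as already used in \cref{corollaryAsplund,Exactintegralformula}). The choice of $\lambda_i(t)$ forces $\|x_i(t)-x\|\le\sqrt{c_i}$ and $\|u_i^\ast(t)\|\,\|x_i(t)-x\|\le\sqrt{c_i}$ ae, whence also $|f_t(x_i(t))-f_t(x)|\le c_i+\sqrt{c_i}$ and $|\langle x_i^\ast(t),x_i(t)-x\rangle|\le c_i+\sqrt{c_i}$ ae; this is exactly \ref{Meanvalueb}--\ref{Meanvalued} (the ``$x$-side''), uniform ae, and $x_i^\ast(t)\in\partial f_t(x_i(t))$, $y_i^\ast(t)\in N_{\dom I_f\cap L_i}(y_i(t))$ hold by construction.

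\emph{The main obstacle, and the reflexive case.} What remains, namely \ref{Meanvaluea}, is the heart of the matter. Writing $\xi_i:=x_i^\ast-u_i^\ast$, one has $\|\xi_i(t)\|\le c_i/\lambda_i(t)=\sqrt{c_i}\,(1+\|u_i^\ast(t)\|)$, so $x^\ast=w^\ast\text{-}\lim_i\int_T(x_i^\ast+y_i^\ast)\,d\mu$ follows from $x^\ast=w^\ast\text{-}\lim_i\int_T(u_i^\ast+v_i^\ast)\,d\mu$ provided $\int_T\xi_i\,d\mu\to0$ weakly-$\ast$, i.e.\ essentially $\sqrt{c_i}\int_T\|u_i^\ast(t)\|\,d\mu(t)\to0$. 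The difficulty is that $\|u_i^\ast(\cdot)\|$ need not be bounded in $t$ or in $i$ --- the $c_i$-subdifferentials of $f_t$ at $x$ may grow as $c_i\downarrow0$ --- so the displacement bound $\|x_i(t)-x\|\le\lambda_i(t)$ and the dual bound $\|\xi_i(t)\|\le c_i/\lambda_i(t)$ pull $\lambda_i(t)$ in opposite directions precisely where $\|u_i^\ast(t)\|$ is large. I would overcome this by combining: the freedom in the scalars $c_i$ (passage to a suitable subnet); the uniform integrability of the scalar maps $\langle u_i^\ast(\cdot),z\rangle$ established in the proof of \cref{Exactintegralformula}; the fact --- implicit in the previous characterizations --- that $\partial I_f(x)\neq\emptyset$ forces $\partial f_t(x)\neq\emptyset$ ae, so that $u_i^\ast(\cdot)$ may be chosen to stay close to a fixed integrable selection of $t\rightrightarrows\partial f_t(x)$ where the latter is bounded; and a truncation of $T$ along the level sets $\{t:\|u_i^\ast(t)\|\le M_i\}$ for the remainder. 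Finally, when $X$ is reflexive and separable, I would invoke the sequential form of \cite[Theorem~5.1]{INTECONV}, so that $(L_i)$ and all the selections are indexed by $\N$, and use that on convex subsets of $X^\ast$ the $w^\ast$-, weak and norm closures coincide when $X$ is reflexive; thus the weak-$\ast$ limit in \ref{Meanvaluea} becomes a norm limit, and the converse implication goes through verbatim.
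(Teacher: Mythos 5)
Your converse implication is fine and matches the paper's, and the pointwise Br{\o}nsted--Rockafellar estimates you derive (the bounds $\|x_i(t)-x\|\le\sqrt{c_i}$, $|f_t(x_i(t))-f_t(x)|\le c_i+\sqrt{c_i}$, $|\langle x_i^\ast(t),x_i(t)-x\rangle|\le c_i+\sqrt{c_i}$) are correct. But the necessity part has a genuine gap, and it is exactly the one you flag yourself: condition \ref{Meanvaluea} is never established. Your error term $\xi_i:=x_i^\ast-u_i^\ast$ is only bounded by $\sqrt{c_i}\,(1+\|u_i^\ast(t)\|)$, where $u_i^\ast$ is merely $w^\ast$-integrable, so $\int_T\|u_i^\ast(t)\|\,d\mu(t)$ need not be finite, let alone $o(1/\sqrt{c_i})$; since the selections $u_i^\ast$ are produced by the representation theorem only \emph{after} $\eta\equiv c_i$ is fixed, you have no control of their size as $c_i\downarrow 0$, and the remedies you list (subnets, uniform integrability of the scalarizations, closeness to an integrable selection of $\partial f_t(x)$, truncation) are not carried out; in particular the claim that $\partial I_f(x)\neq\emptyset$ forces $\partial f_t(x)\neq\emptyset$ ae with an integrable selection is unjustified in this generality. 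Moreover \ref{Meanvaluea} also demands $x_i^\ast+y_i^\ast\in L^1(T,X^\ast)$ (strong integrability), which your construction does not address at all: your $u_i^\ast+v_i^\ast$ are only weak-$\ast$ integrable.

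The paper avoids this obstruction by a different decomposition. It starts from the \emph{exact} representation of \cite[Theorem 4.1]{INTECONV}: for fixed $L\in\mathcal{F}(x_0)$ there is an integrable $z^\ast(\cdot)$ with $z^\ast(t)\in\partial\bigl(f_t+\delta_{L\cap\dom I_f}\bigr)(x_0)$ ae and $x_0^\ast=\int_T z^\ast\,d\mu$. Then, for each accuracy $1/n$, each continuous seminorm $\rho_X$ and each $w^\ast$-continuous seminorm $\rho_{X^\ast}=\sigma_C$ with $C$ finite, it applies pointwise a \emph{sequential sum rule} of Br{\o}nsted--Rockafellar type for the sum $f_t+\delta_{\dom I_f\cap L}$ (\cite[Theorem 2.3]{MR1448052} in the Banach case, \cite[Theorem 4.7]{BronstedRock} in the epi-pointed case): this splits the exact subgradient $z^\ast(t)$ into $x^\ast\in\partial f_t(x)$ and $y^\ast\in N_{\dom I_f\cap L}(y)$ with $\rho_{X^\ast}(z^\ast(t)-x^\ast-y^\ast)\le 1/n$ \emph{uniformly in} $t$, because the accuracy is prescribed in a fixed seminorm and does not degrade with the size of $z^\ast(t)$; integrating this pointwise bound gives $\sigma_C\bigl(x_0^\ast-\int_T(x^\ast+y^\ast)d\mu\bigr)\le\mu(T)/n$, i.e.\ the $w^\ast$-convergence in \ref{Meanvaluea} for free. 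A measurable selection then produces $x(\cdot),x^\ast(\cdot),y(\cdot),\tilde y^\ast(\cdot)$, and the remaining integrability requirement is secured by the correction $y^\ast(t):=P_{\spn C}^{\ast}(x^\ast(t))-x^\ast(t)+P_{\spn C}^{\ast}(\tilde y^\ast(t))$, a finite-dimensional projection trick which preserves the normal-cone membership and the seminorm estimates while making $x^\ast(\cdot)+y^\ast(\cdot)$ strongly integrable. This two-step structure (exact sum-subdifferential representation, then a pointwise sequential \emph{sum rule} with seminorm accuracy independent of $t$) is precisely what your route through $\partial_{c_i}f_t(x)$ cannot reproduce, so as written your necessity argument does not close. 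The reflexive separable refinement is also obtained in the paper by taking sequences in the cited results and a countable family of subspaces, not by the convex-closure argument you invoke.
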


\begin{proof}
	W.l.o.g. we may assume that $\mu (T)<+\infty .$ Take\ $x_{0}^{\ast }\in
	\partial I_{f}(x_{0}),$ $x_{0}\in X,$ and fix $L\in \mathcal{F}(x_{0})$. By   \cite[Theorem 4.1]{INTECONV}  we find a measurable function $%
	z^{\ast }(\cdot )$\ such that $z^{\ast }(t)\in \partial (f_{t}+\delta
	_{L\cap  \dom I_{f}})(x_{0})$ for all $t\in \tilde{T}$ (with $\mu
	(T\backslash \tilde{T})=0$), and $x_{0}^{\ast }=\int_{T}z^{\ast }(t)d\mu
	(t). $ Next, given $n\in \mathbb{N}$, continuous seminorms $\rho _{X}$ on $X$
	and a $w^{\ast }$-continuous seminorm $\rho _{X^{\ast }}:=\sigma _{C}$ on $%
	X^{\ast }$ such that $C$ is finite and $ \spn C\supset  \spn %
	(L\cap  \dom I_{f}),$ we define the multifunction $B:\tilde{T}%
	\rightrightarrows X\times X^{\ast }\times X\times X^{\ast }$ by $(x,x^{\ast
	},y,y^{\ast })\in B(t)$ if and only if
	
	\begin{enumerate}[label={A(\roman*)},ref={A(\roman*)}]
		
		\item $x^{\ast }\in \partial f(t,x)$, $y^{\ast }\in N_{ \dom %
			I_{f}\cap L}(y)$.\label{m1}
		
		\item $\rho _{X}(x-x_{0})\leq 1/n$, $\rho _{X}(y-x_{0})\leq 1/n$, $\rho
		_{X^{\ast }}(z^{\ast }(t)-x^{\ast }-y^{\ast })\leq 1/n.$\label{m2}
		
		\item $|f(t,x)\rightarrow f(t,x_{0})|\leq 1/n$, $\left\vert \langle x^{\ast
		},x-x_{0}\rangle \right\vert \leq 1/n$ and $\left\vert \langle y^{\ast
		},y-x_{0}\rangle \right\vert \leq 1/n.$\label{m3}
	\end{enumerate}
	By \cite[Theorem 2.3]{MR1448052} (see, also, \cite[Theorem 3]{MR1358408})
	(in  \cref{Meanvalue:I}) or by \cite[Theorem 4.7]{BronstedRock} (in
	\cref{Meanvalue:II}), $B(t)$ is non-empty for all $t\in \tilde{T}.$
	Hence, due to the measurability of the involved functions, $B$ has a
	measurable graph, so that by  \cite[Theorem III.22.]{MR0467310}  we
	conclude the existence of nets of measurable functions $x(\cdot ),$ $y(\cdot
	),$ $x^{\ast }(\cdot ),$ and $\tilde{y}^{\ast }(\cdot ),$ which satisfy the
	properties \cref{m1,m2,m3}  above. Now, we consider\ $%
	y^{\ast }(t):=P_{ \spn C}^{\ast }(x^{\ast }(t))-x^{\ast }(t)+P_{%
		\spn C}^{\ast }(\tilde{y}^{\ast }(t)),$ where $P^{\ast }$ is the
	adjoint of the a projection $P_{ \spn C}$ onto $ \spn C.$
	Then 
	\begin{equation*}
		\left\langle y_{i}^{\ast }(t),u-y(t)\right\rangle =\left\langle \tilde{y}%
		^{\ast }(t),u-y(t)\right\rangle \leq 0\text{ \ }\forall u\in  \dom %
		I_{f}\cap L_{i}( \spn C),
	\end{equation*}%
	and so $y_{i}^{\ast }(t)\in N_{ \dom I_{f}\cap L}(y(t)).$ Moreover,
	we have 
	\begin{eqnarray*}
		\rho _{X^{\ast }}(z^{\ast }(t)-x^{\ast }(t)-y^{\ast }(t)) &=&\rho _{X^{\ast
		}}(z^{\ast }(t)-P_{ \spn C}^{\ast }(x^{\ast }(t))-P_{ \spn %
			C}^{\ast }(\tilde{y}^{\ast }(t))) \\
		&=&\sigma _{C}(z^{\ast }(t)-P_{ \spn C}^{\ast }(x^{\ast }(t))-P_{%
			\spn C}^{\ast }(\tilde{y}^{\ast }(t))) \\
		&=&\sigma _{C}(z^{\ast }(t)-x^{\ast }(t)-\tilde{y}^{\ast }(t))\leq 1/n,
	\end{eqnarray*}%
	and
	\begin{eqnarray*}
		\left\vert \langle y^{\ast }(t),y(t)-x_{0}\rangle \right\vert &=&\left\vert
		\langle P_{ \spn C}^{\ast }(x^{\ast }(t))-x^{\ast }(t)+P_{\spn C}^{\ast }(\tilde{y}^{\ast }(t)),y(t)-x_{0}\rangle \right\vert \\
		&=&\left\vert \langle \tilde{y}^{\ast }(t),y(t)-x_{0}\rangle \right\vert
		\leq 1/n,
	\end{eqnarray*}%
	and for every balanced bounded set $A\subset X$%
	\begin{align*}
		\int_{T}\sigma _{A}(x^{\ast }(t)+y^{\ast }(t))d\mu (t)=&\int_{T}\sigma
		_{A}(P_{ \spn C}^{\ast }(x^{\ast }(t))+P_{ \spn C}^{\ast }(%
		\tilde{y}^{\ast }(t)))d\mu (t) \\
		=&\int_{T}\sigma _{P(A)}(x^{\ast }(t)+\tilde{y}^{\ast }(t)-z^{\ast }(t))d\mu
		(t)\\&+\int_{T}\sigma _{P(A)}(z^{\ast }(t))d\mu (t)<+\infty .
	\end{align*}
	{The conclusion when }$X$ is reflexive and separable\ comes from the fact
	that we can take sequences instead of nets used above (using \cite[Theorem
	2.3]{MR1448052}) and countable family of finite-dimensional subspaces
	(see  \cite[Remark 4.3]{INTECONV}).
	
	It remains to verify the sufficiency implication. Take\ $x^{\ast }\in
	X^{\ast }$, $x\in X$, nets of finite-dimensional subspaces $L_{i}$ and nets
	of measurable functions\ $(x_{i}),$ $(x_{i}^{\ast })$ and $(y_{i}),$ $%
	(y_{i}^{\ast })$ as  in the statement of the theorem. Then\ for all $u\in
	X $ we obtain 
	\begin{align*}
		\langle x^{\ast },u-x\rangle & =\lim \int \langle x_{i}^{\ast }+y_{i}^{\ast
		},u-x\rangle d\mu \\
		& =\int\limits_{T}\left( \langle x_{i}^{\ast }(t),u-x_{i}(t)\rangle +\langle
		x_{i}^{\ast }(t),x_{i}(t)-x\rangle  \right)
		d\mu \\&+\int\limits_{T}\left( \langle y_{i}^{\ast
		}(t),y_{i}(t)-x\rangle +\langle y_{i}^{\ast }(t),u-y_{i}(t)\rangle \right)
		d\mu \\
		& \leq \lim \int\limits_{T}\left\{ f(t,u)-f(t,x_{i}(t))\right\} d\mu
		(t)+\lim \int\limits_{T}\left\{ \langle x_{i}^{\ast },x_{i}(t)-x\rangle
		\right\} d\mu (t) \\
		& +\lim \int\limits_{T}\left\{ \langle y_{i}^{\ast }(t),y_{i}(t)-x\rangle
		\right\} d\mu (t) \\
		& =I_{f}(u)-I_{f}(x);
	\end{align*}%
	that is, $x^{\ast }\in \partial I_{f}(x).$
\end{proof}

We shall need the following lemma.

\begin{lemma}
	\label{lemma:previo}Suppose that the linear growth condition \cref{growth}
	holds with some $x_{0}^{\ast }\in \mathnormal{L}_{w^{\ast }}^{1}(T,X^{\ast })$.
	If\ the $f_{t}$'s are epi-pointed ae $t\in T$ and $x_{0}^{\ast }(t)\in \inte {\dom}f_{t}^{\ast }$ ae $t\in T$, then 
	\begin{equation*}
		\partial I_{f}(x)=\bigcap\limits_{\varepsilon >0}{\cl}\left\{
		\bigcup\limits_{\ell \in \mathcal{I}(\varepsilon
			)}(w)\text{-}\int\limits_{T}\partial _{\ell (t)}f(t,x)\cap \inte({\dom}%
		f_{t}^{\ast })d\mu (t)\right\}.
	\end{equation*}
\end{lemma}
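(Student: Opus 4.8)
The plan is to derive the formula from \cref{teorema3.9} by a convex‑combination perturbation that forces the selections occurring there into $\inte\dom f_{t}^{\ast}$, using the anchor $x_{0}^{\ast}(\cdot)$ of the growth condition. The inclusion ``$\supseteq$'' is the soft one: for $\varepsilon>0$ and $\ell\in\mathcal{I}(\varepsilon)$, any $\int_{T}m\,d\mu\in X^{\ast}$ with $m\in L_{w^{\ast}}^{1}(T,X^{\ast})$ and $m(t)\in\partial_{\ell(t)}f_{t}(x)\cap\inte\dom f_{t}^{\ast}$ a.e.\ satisfies $\langle\int_{T}m\,d\mu,\,u-x\rangle\le I_{f}(u)-I_{f}(x)+\varepsilon$ for all $u\in X$, hence lies in the $w^{\ast}$‑closed (\emph{a fortiori} $\tau_{X^{\ast}}$‑closed) set $\partial_{\varepsilon}I_{f}(x)$; taking closures and intersecting over $\varepsilon>0$, the identity $\bigcap_{\varepsilon>0}\partial_{\varepsilon}I_{f}(x)=\partial I_{f}(x)$ gives ``$\supseteq$''. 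This also disposes of the case $\partial I_{f}(x)=\emptyset$, so henceforth I assume $\partial I_{f}(x)\neq\emptyset$, i.e.\ $x\in\dom I_{f}$ and $I_{f}(x)\in\R$.

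For ``$\subseteq$'', fix $x^{\ast}\in\partial I_{f}(x)$ and $\varepsilon>0$, and put $\beta_{0}(t):=f_{t}(x)+f_{t}^{\ast}(x_{0}^{\ast}(t))-\langle x_{0}^{\ast}(t),x\rangle$. By \cref{growth}, $f_{t}^{\ast}(x_{0}^{\ast}(t))\le-\alpha(t)$, and by Fenchel--Young $\beta_{0}(t)\ge0$; hence $0\le\beta_{0}(t)\le f_{t}(x)-\langle x_{0}^{\ast}(t),x\rangle-\alpha(t)$, which together with the integrability of $t\mapsto f_{t}(x)$ (from \cref{growth} and $I_{f}(x)\in\R$), of $t\mapsto\langle x_{0}^{\ast}(t),x\rangle$ ($x_{0}^{\ast}\in L_{w^{\ast}}^{1}$) and $\alpha\in L^{1}$ yields $\beta_{0}\in L^{1}(T,\R_{+})$; moreover $x_{0}^{\ast}(t)\in\partial_{\beta_{0}(t)}f_{t}(x)$ and, by hypothesis, $x_{0}^{\ast}(t)\in\inte\dom f_{t}^{\ast}$ a.e. I would also use that $\phi_{0}:=\int_{T}x_{0}^{\ast}\,d\mu\in X^{\ast}$. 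By \cref{teorema3.9} at level $0$, for a fixed $\epsilon_{1}\in(0,\varepsilon)$ there is a net $(m_{\alpha})\subset L_{w^{\ast}}^{1}(T,X^{\ast})$ with $m_{\alpha}(t)\in\partial_{\ell_{\alpha}(t)}f_{t}(x)$ a.e., $\ell_{\alpha}\in\mathcal{I}(\epsilon_{1})$, $\int_{T}m_{\alpha}\,d\mu\in X^{\ast}$, and $\int_{T}m_{\alpha}\,d\mu\to x^{\ast}$ for the weak$^{\ast}$ topology.

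Now choose $\lambda\in(0,1]$ with $(1-\lambda)\epsilon_{1}+\lambda\int_{T}\beta_{0}\,d\mu\le\varepsilon$ and set $m_{\alpha}^{\lambda}:=(1-\lambda)m_{\alpha}+\lambda x_{0}^{\ast}\in L_{w^{\ast}}^{1}(T,X^{\ast})$. Then: $m_{\alpha}^{\lambda}(t)\in\partial_{(1-\lambda)\ell_{\alpha}(t)+\lambda\beta_{0}(t)}f_{t}(x)$ a.e.\ (linearity of the $\varepsilon$‑subdifferential inequality) with $(1-\lambda)\ell_{\alpha}+\lambda\beta_{0}\in\mathcal{I}(\varepsilon)$; $m_{\alpha}^{\lambda}(t)\in\inte\dom f_{t}^{\ast}$ a.e., since $m_{\alpha}(t)\in\partial_{\ell_{\alpha}(t)}f_{t}(x)\subseteq\dom f_{t}^{\ast}$, $x_{0}^{\ast}(t)\in\inte\dom f_{t}^{\ast}$, and an open segment from a point of a convex set to an interior point lies in the interior; and $\int_{T}m_{\alpha}^{\lambda}\,d\mu=(1-\lambda)\int_{T}m_{\alpha}\,d\mu+\lambda\phi_{0}\in X^{\ast}$. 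So $\int_{T}m_{\alpha}^{\lambda}\,d\mu\in C_{\varepsilon}:=\bigcup_{\ell\in\mathcal{I}(\varepsilon)}(w)\text{-}\int_{T}\big(\partial_{\ell(t)}f_{t}(x)\cap\inte\dom f_{t}^{\ast}\big)\,d\mu(t)$, and letting $\alpha$ run we get $(1-\lambda)x^{\ast}+\lambda\phi_{0}\in\cl\nolimits^{w^{\ast}}C_{\varepsilon}$; then $\lambda\downarrow0$ (so that $x^{\ast}+\lambda(\phi_{0}-x^{\ast})\to x^{\ast}$) gives $x^{\ast}\in\cl\nolimits^{w^{\ast}}C_{\varepsilon}$. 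Since $C_{\varepsilon}$ is convex (each weak integral of the convex‑valued $t\mapsto\partial_{\ell(t)}f_{t}(x)\cap\inte\dom f_{t}^{\ast}$ is convex, and the union over the convex set $\mathcal{I}(\varepsilon)$ stays convex for the same reason), its $\tau_{X^{\ast}}$‑closure equals its $w^{\ast}$‑closure, so $x^{\ast}\in\cl C_{\varepsilon}$; arbitrariness of $\varepsilon$ finishes the inclusion.

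The step I expect to be the main obstacle is exactly the claim $\phi_{0}=\int_{T}x_{0}^{\ast}\,d\mu\in X^{\ast}$, i.e.\ that the weak integral of the anchor is a \emph{continuous} functional (so that $m_{\alpha}^{\lambda}$ really contributes to the Aumann weak integral defining $C_{\varepsilon}$); this is the one place where the epi‑pointedness of the $f_{t}$'s has to be exploited, through the resulting properness and good interior behaviour of $I_{f}^{\ast}$, compatible with \cref{teorema3.9}. The only other delicate point is the identification $\cl C_{\varepsilon}=\cl\nolimits^{w^{\ast}}C_{\varepsilon}$, which is automatic whenever $\tau_{X^{\ast}}$ lies between $w^{\ast}$ and the Mackey topology, and for $\tau_{X^{\ast}}=\beta(X^{\ast},X)$ needs only that $C_{\varepsilon}$ is $w^{\ast}$‑closed‑convex.
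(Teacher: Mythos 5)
Your argument is correct and is essentially the paper's own proof: both start from \cref{teorema3.9} and push the approximating selections into $\inte(\dom f_t^{\ast})$ by taking the convex combination with the anchor $x_0^{\ast}(t)$, control the extra $\epsilon$-budget (the paper perturbs a generic element $z^{\ast}(\cdot)$ of the pre-closure set, estimating the exact Fenchel gap of $z_\lambda^{\ast}(t)=(1-\lambda)z^{\ast}(t)+\lambda x_0^{\ast}(t)$ via convexity of $f_t^{\ast}$ and dominated convergence, whereas you perturb the whole approximating net and use the cruder a priori bound $(1-\lambda)\ell_\alpha+\lambda\beta_0$ --- both work), and then let $\lambda\downarrow 0$. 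The two points you single out as delicate --- that $\int_T x_0^{\ast}\,d\mu$ (hence the perturbed weak integrals) defines an element of $X^{\ast}$, and the identification of the closure with the $w^{\ast}$-closure of the convex set --- are passed over silently in the paper's proof as well, so they are not gaps of your proposal relative to it.
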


\begin{proof}
	Since the inclusion $\supseteq $ is trivial we focus on the opposite one.
	According to  \cref{teorema3.9}, it suffices\ to show that for every $%
	\varepsilon >0$, $\hat{\ell}\in \mathcal{I}(\varepsilon )$ and $z^{\ast }\in
	(w)\text{-}\int\limits_{T}\partial _{\hat{\ell}(t)}f(t,x)d\mu (t)$ we have that $$
	z^{\ast }\in  \cl  \left\{ \bigcup\limits_{\ell \in \mathcal{I}%
		(2\varepsilon )}(w)\text{-}\int\limits_{T}\partial _{\ell (t)}f(t,x)\cap \inte(%
	\dom f_{t}^{\ast })d\mu (t)\right\} .$$ Since $f^{\ast }(\cdot
	,z^{\ast }(\cdot ))\in \mathnormal{L}^{1}(T,\mathbb{R})$, we have that\ $%
	z^{\ast }(t)\in  \dom f_{t}^{\ast }$ for ae $t\in T,$ and so $%
	z_{\lambda }^{\ast }(t)=(1-\lambda )z^{\ast }(t)+\lambda x_{0}^{\ast }(t)\in %
	\inte( \dom f_{t}^{\ast })\cap \partial _{\ell _{\lambda
		}(t)}f_{t}(x) $ where $\lambda \in (0,1)$ and $\ell _{\lambda }(t):=\langle
	z_{\lambda }^{\ast },x\rangle -f(t,x)-f^{\ast }(t,z_{\lambda }^{\ast
	}(t))\geq 0.$ By the Fenchel inequality and convexity of the $f_{t}^{\ast }$%
	's we get 
	\begin{align*}
		\langle z_{\lambda }^{\ast }(t),x\rangle -f(t,x)-f^{\ast }(t,z^{\ast
		}(t))& \leq f^{\ast }(t,z_{\lambda }^{\ast }(t))-f^{\ast }(t,z^{\ast }(t))\\
		&\leq
		\lambda (f^{\ast }(t,x_{0}^{\ast }(t))-f^{\ast }(t,z^{\ast }(t))),
	\end{align*}%
	and so, since\ $f^{\ast }(t,z_{\lambda }^{\ast })\rightarrow f^{\ast
	}(t,z^{\ast }(t))$ as $\lambda \downarrow 0$, we have $\ell _{\lambda
	}(t)\rightarrow \langle z^{\ast },x\rangle -f(t,x)-f^{\ast }(t,z^{\ast
	}(t))\leq \hat{\ell}(t)$, by the Lebesgue dominated convergence theorem we\
	get $$\lim_{\lambda \rightarrow 0}\int_{T}\ell _{\lambda }(t)d\mu (t)\leq
	\int_{T}\hat{\ell}(t)d\mu (t)\leq \varepsilon .$$
\end{proof}

\begin{theorem}
	\label{aproximatesubdiferential}Assume\ that the linear growth condition 
	\cref{growth} holds with $x_{0}^{\ast }\in \mathnormal{L}_{w^{\ast
	}}^{\infty }(T,X^{\ast })$, and assume that either $X$ is Banach, or the $%
	f_{t}$'s are epi-pointed and $x_0^{\ast }(t)\in \inte \dom  f_{t}$ ae.
	Then $x^{\ast }\in \partial I_{f}(x)$ if and only if there exist nets of
	measurable functions $x_{i}: T \to X,$ $(x_{i}^{\ast })\subset \mathnormal{%
		L}_{w^{\ast }}^{\infty }(T,X^{\ast })$ such that $x_{i}^{\ast }(t)\in
	\partial f(t,x_{i}(t))$ ae, and
	
	\begin{enumerate}[label={(\alph*)},ref={(\alph*)}]
		
		\item \label{aproximatesubdiferentiala}$x^{\ast }={w^{\ast }}\mathnormal{-}%
		\lim \int\limits_{T}x_{i}^{\ast }(t)d\mu (t)$.
		
		\item \label{aproximatesubdiferentialb}$x_{i}\rightarrow x$ uniformly ae.
		
		\item \label{aproximatesubdiferentialc} $\displaystyle\int\limits_T |f(t, x_i(t)) -\langle x_i^*(t),  x_i(t)-x_0 \rangle -  f(t,x_0) |d\mu(t) \to 0$.
	\end{enumerate}
	
	If $X$ is reflexive, then the above nets are replaced by sequences, and the $%
	w^{\ast }$-convergence by norm-convergence.
\end{theorem}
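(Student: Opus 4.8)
The plan is to prove the equivalence by establishing the two implications separately. The ``if'' direction is the elementary one. Given nets $(x_{i})$, $(x_{i}^{\ast })$, $(L_{i})$ as in the statement, fix $u\in X$; from $x_{i}^{\ast }(t)\in \partial f(t,x_{i}(t))$ and the Fenchel--Young equality $f(t,x_{i}(t))+f_{t}^{\ast }(x_{i}^{\ast }(t))=\langle x_{i}^{\ast }(t),x_{i}(t)\rangle $ one obtains, with $r_{i}(t):=f(t,x)+f_{t}^{\ast }(x_{i}^{\ast }(t))-\langle x_{i}^{\ast }(t),x\rangle \ge 0$ the Fenchel gap of $x_{i}^{\ast }(t)$ at $x$,
\[
\langle x_{i}^{\ast }(t),u-x\rangle \;=\;\langle x_{i}^{\ast }(t),u-x_{i}(t)\rangle +\big(f(t,x_{i}(t))-f(t,x)\big)+r_{i}(t)\;\le\;\big(f(t,u)-f(t,x)\big)+r_{i}(t).
\]
Note that $\int_{T}r_{i}\,d\mu $ is exactly the quantity in \ref{aproximatesubdiferentialc}, so it tends to $0$. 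Integrating the above over $T$ (the growth condition \cref{growth} together with \ref{aproximatesubdiferentialb} makes all the integrals well defined and finite) and then passing to the limit along the net using \ref{aproximatesubdiferentiala} gives $\langle x^{\ast },u-x\rangle \le I_{f}(u)-I_{f}(x)$, that is $x^{\ast }\in \partial I_{f}(x)$; in the reflexive case the computation runs verbatim with sequences and a norm limit. This is the sufficiency part of \cref{Meanvalue} with the normal-cone term removed.

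For the ``only if'' direction, assume $x^{\ast }\in \partial I_{f}(x)$, so $I_{f}(x)<+\infty $, and w.l.o.g.\ $\mu (T)=1$. First I would represent $x^{\ast }$ through approximate subgradients of the $f_{t}$'s: since $f$ is a convex normal integrand one has $\overline{\co}I_{f}=I_{\overline{\co}f}$, and as \cref{growth} holds with $x_{0}^{\ast }\in L_{w^{\ast }}^{\infty }(T,X^{\ast })$ the $L^{\infty }$-refinement of \cref{teorema3.9} (the remark following it) yields, for every $\gamma >0$ and every $V\in \mathcal{N}_{x^{\ast }}(w^{\ast })$, some $\ell \in \mathcal{I}(\gamma )$ and some $y^{\ast }(\cdot )\in L_{w^{\ast }}^{\infty }(T,X^{\ast })$ with $y^{\ast }(t)\in \partial _{\ell (t)}f_{t}(x)$ ae and $\int_{T}y^{\ast }d\mu \in V$; in the epi-pointed case \cref{lemma:previo} furnishes the same data with the additional property $y^{\ast }(t)\in \inte \dom f_{t}^{\ast }$ ae. Next I would turn these approximate subgradients at $x$ into \emph{exact} ones at nearby points, measurably in $t$: for fixed $n\in \mathbb{N}$ and continuous seminorms $\rho _{X}$ on $X$, $\rho _{X^{\ast }}$ on $X^{\ast }$, and $\gamma \le n^{-2}$, define $B:T\rightrightarrows X\times X^{\ast }$ by letting $(z,z^{\ast })\in B(t)$ iff $z^{\ast }\in \partial f(t,z)$, $\rho _{X}(z-x)\le 1/n$, $\rho _{X^{\ast }}(z^{\ast }-y^{\ast }(t))\le n\ell (t)$, and $0\le f(t,x)+f_{t}^{\ast }(z^{\ast })-\langle z^{\ast },x\rangle \le 2\ell (t)$. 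The quantitative Br{o}nsted-Rockafellar theorem applied fibrewise --- \cite[Theorem 2.3]{MR1448052} when $X$ is Banach, \cite[Theorem 4.7]{BronstedRock} in the epi-pointed case (this is where the membership $y^{\ast }(t)\in \inte \dom f_{t}^{\ast }$ is used) --- with accuracy $\ell (t)$ and displacement parameter of order $n\ell (t)$, ensures $B(t)\neq \emptyset $ for ae $t$. Measurability of $f$, $f^{\ast }$, $y^{\ast }(\cdot )$ and $\ell (\cdot )$ makes $B$ have a measurable graph, so \cite[Theorem III.22]{MR0467310} produces measurable selections $t\mapsto (x_{n,\rho }(t),x_{n,\rho }^{\ast }(t))$, indexed by $(n,\rho _{X},\rho _{X^{\ast }},V)$.

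Then one checks the conclusions. Property \ref{aproximatesubdiferentialb} is immediate from $\rho _{X}(x_{n,\rho }(t)-x)\le 1/n$ holding for \emph{every} $t$; property \ref{aproximatesubdiferentialc} holds because its integrand coincides with the Fenchel gap $f(t,x)+f_{t}^{\ast }(x_{n,\rho }^{\ast }(t))-\langle x_{n,\rho }^{\ast }(t),x\rangle \le 2\ell (t)$, whose integral is $\le 2\gamma \to 0$; and property \ref{aproximatesubdiferentiala} follows from $\int_{T}\rho _{X^{\ast }}(x_{n,\rho }^{\ast }-y^{\ast })\,d\mu \le n\int_{T}\ell \,d\mu \le n\gamma \to 0$ combined with $\int_{T}y^{\ast }d\mu \in V$. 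The membership $x_{n,\rho }^{\ast }\in L_{w^{\ast }}^{\infty }(T,X^{\ast })$ uses that $y^{\ast }\in L_{w^{\ast }}^{\infty }$ and that, thanks to $x_{0}^{\ast }\in L_{w^{\ast }}^{\infty }$ and \cref{growth}, in the first step one may take the error $\ell $ itself in $L^{\infty }$ with arbitrarily small sup-norm --- so the displacement parameter $\sim n\ell (t)$ and the perturbation $x_{n,\rho }^{\ast }-y^{\ast }$ stay bounded --- the point being that restricting the union in \cref{teorema3.9} to uniformly small $\ell $ does not shrink the $w^{\ast }$-closure. Finally, when $X$ is reflexive it is separable (by the standing Suslin hypothesis), as is $X^{\ast }$; replacing the net indices by countable cofinal families of finite-dimensional subspaces and of neighborhoods/seminorms turns the nets into sequences, and the $w^{\ast }$-limit becomes a norm limit by the sequential form of \cite[Theorem 2.3]{MR1448052} and \cite[Remark 4.3]{INTECONV}, exactly as at the end of the proof of \cref{Meanvalue}.

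The hard part is this second step of the ``only if'' direction, and specifically the calibration of the three tolerances: the point displacement must be forced uniformly (almost everywhere) small --- not merely small in $L^{1}$ --- to yield \ref{aproximatesubdiferentialb}, while the subgradient perturbation must remain essentially bounded so that the selections stay inside $L_{w^{\ast }}^{\infty }$; reconciling these two requirements is exactly what the hypothesis $x_{0}^{\ast }\in L_{w^{\ast }}^{\infty }$ (through the $L^{\infty }$-refinement of the representation) is there to do. A further obstacle is running Br{o}nsted-Rockafellar fibrewise in the epi-pointed, non-Banach, non-metrizable setting, where there is no norm at hand: the displacement has to be controlled in a prescribed continuous seminorm and the correction has to be performed within the continuity region $\inte \dom f_{t}^{\ast }$ of the conjugate, which is precisely why one must invoke \cref{lemma:previo} rather than \cref{teorema3.9} and \cite[Theorem 4.7]{BronstedRock} rather than the classical Br{o}nsted-Rockafellar theorem.
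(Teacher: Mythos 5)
Your argument follows essentially the same route as the paper's proof: sufficiency by integrating the pointwise subgradient inequality and passing to the $w^{\ast}$-limit (the paper's own computation, with your observation that the integrand in (c) is exactly the Fenchel gap), and necessity by first producing, via \cref{teorema3.9} (resp.\ \cref{lemma:previo} in the epi-pointed case, which is also where the paper places the hypothesis $x_{0}^{\ast}(t)\in\inte\dom f_{t}^{\ast}$), selections $z^{\ast}(t)\in\partial_{\ell(t)}f_{t}(x)$ with $\int_{T}z^{\ast}d\mu$ close to $x^{\ast}$ in a prescribed $w^{\ast}$-continuous seminorm, and then applying Br{o}nsted--Rockafellar fibrewise through a measurable multifunction (exact subgradient, uniformly small displacement in $\rho_{X}$, dual perturbation of order $n\ell(t)$, gap $\leq 2\ell(t)$) followed by a Castaing measurable selection; your calibration $\gamma\leq n^{-2}$, displacement $1/n$, perturbation $n\ell(t)$ is the paper's choice $\ell\in\mathcal{I}(\epsilon^{2})$, displacement $\epsilon$, perturbation $\ell(t)/\epsilon$ up to renaming, and the index set and the reflexive/separable reduction to sequences are handled the same way.

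The one step where you outrun your justification is the claim used to secure $(x_{i}^{\ast})\subset L^{\infty}_{w^{\ast}}(T,X^{\ast})$: that in the representation step one may take the error $\ell$ itself essentially bounded with arbitrarily small sup-norm because ``restricting the union in \cref{teorema3.9} to uniformly small $\ell$ does not shrink the $w^{\ast}$-closure.'' This is asserted, not proved, and it is not obvious: the error produced in the proof of \cref{teorema3.9} is the Fenchel gap $f(t,x)+f_{t}^{\ast}(x^{\ast}(t))-\langle x^{\ast}(t),x\rangle$, which is controlled only in $L^{1}$, and truncating the set where it is large forces you to modify $x^{\ast}(\cdot)$ there and to re-establish the $w^{\ast}$-closeness of its integral. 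It is worth knowing that the paper does not attempt this either: in its proof the perturbation condition only places $x^{\ast}(t)-z^{\ast}(t)$ in $\frac{\ell(t)}{\epsilon}B^{\circ}_{\rho_{X}}(0,1)$ with $\ell\in L^{1}$, and the selection is only claimed to lie in $L^{1}_{w^{\ast}}(T,X^{\ast})$, so the paper's proof de facto establishes the statement with $w^{\ast}$-integrable (not essentially bounded) selections. So either supply an actual argument for your $L^{\infty}$ refinement (this would go beyond the paper) or weaken that clause as the paper's proof implicitly does. A smaller caveat of the same kind: you assert $\overline{\co}I_{f}=I_{\overline{\co}f}$ as automatic for a convex normal integrand, which amounts to lower semicontinuity of $I_{f}$ and is not justified, though the paper invokes \cref{teorema3.9} at the same spot without comment.
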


\begin{proof}
	Let $u_{0}^{\ast }\in \partial I_{f}(x_{0})$ for $x_{0}\in X,$ $n\geq 0$, $%
	\rho _{X}$ a continuous seminorm in $X$ and $\rho _{X^{\ast }}$ a $w^{\ast }$%
	-continuous seminorm in $X^{\ast }.$ We choose $\epsilon \in (0,1/2n)$ such
	that $\epsilon \sup_{y^{\ast }\in \mathbb{B}_{\rho _{X}}^{\circ }(0,1)}\rho
	_{X^{\ast }}(y^{\ast })\leq 1/(2n)$. Then, by  \cref{teorema3.9} (or
	Lemma \ref{lemma:previo}, when $f_{t}$ are epi-pointed), we can choose $\ell
	\in \mathcal{I}(\varepsilon ^{2})$ and $z^{\ast }\in
	L_{w^{\ast }}^{1}(T,X^{\ast }),$ such that $\rho _{X^{\ast }}(u_{0}^{\ast
	}-\int_{T}z^{\ast }d\mu )\leq 1/(2n)$ and $z^{\ast }(t)\in \partial _{\ell
		(t)}f(t,x_{0})$ ae (when $f_{t}$ are epi-pointed we can take $z^{\ast }(t)\in \partial _{\ell (t)}f(t,x_{0})\cap \inte%
	( \dom  f_{t})$). We define the
	measurable multifunction $B:T\rightarrow X\times X^{\ast }$ as $(x,x^{\ast
	})\in B(t)$ if and only if
	
	\begin{enumerate}[label={B(\roman*)},ref={B(\roman*)}]
		
		\item\label{apxa}$x^*\in \sub f(t,x)$
		\item\label{apxb}$\rho_X(x-x_0)\leq  \epsilon$.
		\item\label{apxc}$x^* - z^*(t) \in \frac{\ell(t)}{\epsilon}B^\circ_{\rho_{X}}(0,1)$.
		\item\label{apxd}$|f(t, x) -\langle x^*,  x-x_0 \rangle -  f(t,x_0)|\leq 2\ell(t)$.
	\end{enumerate}
	By Br{\o}nsted-Rockafellar's theorem in the Banach case (see \cite[Theorem 1]%
	{MR705231}), and\ by \cite[Theorem 4.2]{BronstedRock} in the
	epi-pointed case, $B(t)$ is nonempty ae $t\in T$, and taking into account
	\cite[Theorem III.22.]{MR0467310}, there exists a measurable selection $(x(t), x^*(t)) \in B(t)$ such that 
	 $x^{\ast }\in \mathnormal{L}_{w^{\ast }}^{1}(T,X^{\ast })$ (by \cref%
	{apxc}, or  in $L^1(T,X)$, if $ z^* \in  L^1(T,X)$), such that  $\rho_{X}(x_0 - x(\cdot))_{\infty} \leq 1/n$,  
	\begin{align}
		\rho_{X^*}(x^*_0 - \int_T x^*(t)d\mu(t) &\leq \rho_{X^*}(x^*_0 - \int_T z^*(t)f\mu(t)) +\rho_{X^*}(x^*_0 - \int_T z^*(t)f\mu(t) \\
		&\leq 1/2n + \sup\limits_{y^*\in B^\circ_{\rho_{X}}(0,1)}\rho_{X^*}(y^*)\int_T \frac{\ell(t)}{\epsilon} d\mu(t)\\ 
		&\leq 1/2n +1/2n=1/n,
	\end{align}
	and $\displaystyle\int\limits_T |f(t, x_i(t)) -\langle x_i^*(t),  x_i(t)-x_0 \rangle -  f(t,x_0) |d\mu(t)\leq 1/n$.

	To prove  the sufficiency, let $x^*\in X^*$, $x\in X$, and nets $x_i(\cdot)$, $x^*_i(\cdot)$ that satisfy the conclusion of the Theorem. Then for every $y\in X$
	\begin{align*}
		\langle x^*_0, y-x_0\rangle & =  \langle x^*_0 - \displaystyle \int x^*_i, y-x_0\rangle + \displaystyle \int \langle x^*_i(t), y-x_i(t)\rangle +\displaystyle \int \langle x^*_i(t), x_i(t)-x_0\rangle\\
		&\leq  \langle x^*_0 - \displaystyle \int x^*_i, y-x_0\rangle + \displaystyle \int f(t,y) - \displaystyle \int f(t,x_i(t)) +\displaystyle \int \langle x^*_i(t), x_i(t)-x_0\rangle\\
		&\leq \langle x^*_0 - \displaystyle \int x^*_i, y-x_0\rangle + I_f(y) -I_f(x) \\&+ \displaystyle\int\limits_T |f(t, x_i(t)) -\langle x_i^*(t),  x_i(t)-x_0 \rangle -  f(t,x_0) |d\mu(t).
	\end{align*}
	So, taking the limits we conclude the result.
\end{proof}

\section{Conclusions}

\

We provide new calculus rules for the subdifferential and the $\varepsilon-$subdifferential of convex integral functions, by means only of the corresponding subdifferential of the data functions defining the integrand functions. This goal was achieved under appropriate and natural qualifiacation conditions that rely on continuity properties of the convex integrand.

\biboptions{sort,compress}
\bibliography{references}
\end{document}